\documentclass[oneside]{amsart}
\usepackage{amsmath}
\usepackage[utf8]{inputenc}
\usepackage[T1]{fontenc}
\usepackage[dvipsnames]{xcolor}
\usepackage{todonotes}
\usepackage{graphicx} % Required for inserting images
\usepackage{hyperref}
\usepackage{mathrsfs,comment}
\usepackage{geometry}
\usepackage{float}
\usepackage{dsfont}
\usepackage{amssymb}
\usepackage{stmaryrd}
\usepackage[all,arc,2cell]{xy}
\usepackage{tikz-cd}
\usepackage{tikz-3dplot}
\usepackage[final]{showkeys}

\newcommand*{\Q}{{\mathbb Q}}

\newcommand*{\F}{{\mathbb F}}

\DeclareMathOperator{\colim}{colim}
\DeclareMathOperator{\Stab}{Stab}

\DeclareMathOperator{\Sub}{Sub}

\DeclareMathOperator{\gldim}{gl.dim}

\newcommand*{\m}[1]{{\protect\underline{#1}}}

\newcommand*{\mM}{\m{M}}

\newcommand{\cc}[1]{\mathcal #1}
\newcommand{\cC}{\cc{C}}

\newcommand{\cO}{\cc{O}}

\newcommand{\cP}{\cc{P}}

\newcommand{\Coeff}{\mathcal{C}\!\textit{oeff}}

\newcommand{\Ab}{\mathcal Ab}

\newcommand{\Mackey}{\mathcal Mackey}

\newcommand{\dashMod}{\mhyphen\mathcal Mod}

\newcommand{\Fin}{\mathcal{F}\!\textit{in}}
\newcommand*{\cA}{\mathcal A}
\newcommand{\OMackey}{\cO\mhyphen\Mackey}
\newcommand{\OpMackey}{\cO'\mhyphen\Mackey}

\mathchardef\mhyphen=45

\newcommand{\Span}{\textnormal{Span}}

\counterwithin{equation}{section}

\newtheorem{theorem}[equation]{Theorem}
\newtheorem{lemma}[equation]{Lemma}
\newtheorem{corollary}[equation]{Corollary}

\newtheorem{proposition}[equation]{Proposition}
\newtheorem{conjecture}[equation]{Conjecture}
\newtheorem*{theorem*}{Theorem}
\newtheorem*{proposition*}{Proposition}
\newtheorem*{lemma*}{Lemma}

\newtheorem{MainTheorems}{Theorem}

\theoremstyle{remark}
\newtheorem{remark}[equation]{Remark}
\newtheorem{example}[equation]{Example}

\theoremstyle{definition}

\newtheorem{definition}[equation]{Definition}

\DeclareMathOperator{\CoInd}{\textnormal{\textsf{CoInd}}}
\DeclareMathOperator{\Ind}{\textnormal{\textsf{Ind}}}

\newcommand*{\IndH}[1][{[H]}]{\Ind_{#1}}
\newcommand*{\CoIndH}[1][{[H]}]{\CoInd_{#1}}

\newcommand{\Id}{\mathrm{Id}}

\newcommand{\Mdef}[2]{\newcommand{#1}{{#2}}}

\Mdef{\bA}{\mathbb{A}}
\Mdef{\bB}{\mathbb{B}}
\Mdef{\bC}{\mathbb{C}}
\Mdef{\bD}{\mathbb{D}}
\Mdef{\bE}{\mathbb{E}}
\Mdef{\bF}{\mathbb{F}}
\Mdef{\bG}{\mathbb{G}}
\Mdef{\bH}{\mathbb{H}}
\Mdef{\bI}{\mathbb{I}}
\Mdef{\bJ}{\mathbb{J}}
\Mdef{\bK}{\mathbb{K}}
\Mdef{\bL}{\mathbb{L}}
\Mdef{\bM}{\mathbb{M}}
\Mdef{\bN}{\mathbb{N}}
\Mdef{\bO}{\mathbb{O}}
\Mdef{\bP}{\mathbb{P}}
\Mdef{\bQ}{\mathbb{Q}}
\Mdef{\bR}{\mathbb{R}}
\Mdef{\bS}{\mathbb{S}}
\Mdef{\bT}{\mathbb{T}}
\Mdef{\bU}{\mathbb{U}}
\Mdef{\bV}{\mathbb{V}}
\Mdef{\bW}{\mathbb{W}}
\Mdef{\bX}{\mathbb{X}}
\Mdef{\bY}{\mathbb{Y}}
\Mdef{\bZ}{\mathbb{Z}}
\Mdef{\cL}{\mathcal{L}}
\Mdef{\mcA}{\mathcal{A}}
\Mdef{\mcB}{\mathcal{B}}
\Mdef{\mcD}{\mathcal{D}} %
\Mdef{\mcE}{\mathcal{E}}
\Mdef{\mcF}{\mathcal{F}}
\Mdef{\mcG}{\mathcal{G}}
\Mdef{\mcH}{\mathcal{H}} 
\Mdef{\mcI}{\mathcal{I}}
\Mdef{\mcJ}{\mathcal{J}}
\Mdef{\mcK}{\mathcal{K}}
\Mdef{\mcL}{\mathcal{L}}
\Mdef{\mcM}{\mathcal{M}}
\Mdef{\mcN}{\mathcal{N}}
\Mdef{\mcO}{\mathcal{O}}
\Mdef{\mcP}{\mathcal{P}}
\Mdef{\mcQ}{\mathcal{Q}}
\Mdef{\mcR}{\mathcal{R}}
\Mdef{\mcS}{\mathcal{S}}
\Mdef{\mcT}{\mathcal{T}}
\Mdef{\mcU}{\mathcal{U}}
\Mdef{\mcV}{\mathcal{V}}
\Mdef{\mcW}{\mathcal{W}}
\Mdef{\mcX}{\mathcal{X}}
\Mdef{\mcY}{\mathcal{Y}}
\Mdef{\mcZ}{\mathcal{Z}}

\DeclareMathOperator{\ext}{Ext}
\DeclareMathOperator{\orb}{Orb}
\DeclareMathOperator{\ev}{Ev}
\DeclareMathOperator{\aut}{Aut}

\newcommand{\PreSh}{\mathcal{P}\!\textit{re}\mathcal{S}\!\textit{h}}

\newcommand{\SubOG}{\Sub_{G}^{\cO}}
\newcommand{\op}{\textrm{op}}

\newcommand{\abs}[1]{\left\vert#1\right\vert}

\title[Global dimension of \(\OMackey^G_\bQ\)]{Global dimension of the category of rational incomplete Mackey functors for a finite abelian group \(G\)}

\author{David Barnes}
\address{School of Mathematics and Physics, Queen's University Belfast, UK}
\email{d.barnes@qub.ac.uk}

\author{Anna Marie Bohmann}
\address{Department of Mathematics, Vanderbilt University, 1326 Stevenson Center, Nashville, TN, 37240, USA}
\email{am.bohmann@vanderbilt.edu}

\author{Michael A.~Hill}
\address{School of Mathematics, University of Minnesota, 206 Church Street, Minneapolis, MN, 55415 USA}
\email{mahill@umn.edu}

\author{Magdalena K\k{e}dziorek}
\address{Department of Mathematics, Radboud University Nijmegen, The Netherlands}
\email{m.kedziorek@math.ru.nl}

\begin{document}

\begin{abstract}In this paper, we analyse the global dimension of the category of rational incomplete Mackey functors over a finite abelian group.   Incomplete Mackey functors have recently risen to prominence in algebraic topology and hence it is valuable to understand their homological algebra invariants.  When working over the rational numbers, results of Greenlees--May and Th\'evanez--Webb show that the homological algebra of complete Mackey functors is quite simple, but the incomplete case is more complicated.  In this paper we use splitting results by the first, third and fourth authors to give an upper bound on the global dimension of rational incomplete Mackey functors where the incompleteness is governed by what is known as a disk-like transfer system. We then avail ourselves of a new connection to incidence algebras over posets to calculate the global dimension of rational incomplete Mackey functors in the disk-like case when the group is abelian.
\end{abstract}

\maketitle

\tableofcontents

\section{Introduction}
\subsection*{Background}
The global dimension of an abelian category is a classical invariant from homological algebra that gives a useful measure of complexity of the category. For example, the category of modules over a field has global dimension 0, the category of abelian groups has global dimension 1 and for \(k\) a field, the category of modules over \(k[x_1, \dots, x_n]\) has global dimension \(n\).  Low global dimensions usually indicate categories with simpler (homological) structure: for a ring \(R\), the category of left \(R\)-modules has global dimension at most one if and only if the ring \(R\) is left hereditary, meaning all submodules of projective left modules are projective. More generally, an abelian category is semi-simple if and only if the global dimension is 0. While there are several equivalent definitions, for our purposes the global dimension of an abelian category \(\mcA\) with enough injectives and enough projectives is the largest natural number \(n\) such that the bifunctor \(\ext_\mcA^n\) is non-zero, or infinity if no such number exists. 

In this paper, we are interested in abelian categories arising from the algebra of group actions. For example, when we consider the representation theory of a finite group \(G\), Maschke's theorem implies that the category of representations of \(G\) over a field of characteristic zero has global dimension 0.  Similarly, the category of \(G\)-Mackey functors over \(k\), a field of  characteristic zero, has global dimension zero. 
This result follows from the classification result of  Greenlees and May \cite{gremay95} and Th\'evanez and Webb \cite{tw90}, which splits the category of \(G\)-Mackey functors over \(k\) as the product of  categories of \(k[W_G H]\)-modules as \(H\) runs over  the conjugacy classes of subgroups of \(G\). 

We may think of these examples as the most basic case, generalizations of which usually give non-zero global dimensions. For example, in the case of profinite groups,  the global dimension of rational \(G\)-Mackey functors is related to the Cantor--Bendixson rank of the space of closed subgroups of \(G\); see work of the first author and Sugrue,  \cite{BSmackey}. In particular, when \(G\) is the \(p\)-adic integers, the global dimension is 1.  Generalising in a different direction, Bouc, Stancu and Webb \cite{BSW17} study the global dimension of the category of \(G\)-Mackey functors over a field of characteristic \(p\),  where the global dimension is sometimes infinite.
They give explicit conditions for when the category of cohomological Mackey functors has finite global dimension. Further results on homological algebra of Mackey functors can be found in Mart\'inez-P\'erez and Nucinkis \cite{MPN06}.

In this paper, the generalization is to incomplete Mackey functors: Mackey functors that do not have all transfer maps.  For \(G\) a finite group, \(G\)-incomplete Mackey functors  are the additive version of the bi-incomplete \(G\)-Tambara functors of Blumberg and the third author \cite{BH-Bi-incompleteT}. They naturally arise as homotopy groups of rational incomplete \(G\)-spectra. The transfer maps which are present are determined by  a \(G\)-transfer system \(\cO\); see Definition \ref{def:transfersystem}.  To capture that in the notation we will talk about \(\cO\)-Mackey functors. In the case of the complete \(G\)-transfer system, one recovers the classical case of \(G\)-Mackey functors.  In the case of the minimal \(G\)-transfer system, which has no non-identity maps, one recovers  the case of \(G\)-coefficient systems. 

This paper is part of a series of papers on rational \(\cO\)-Mackey functors.  In \cite{paper1}, the first, third and fourth author  proved splitting results for rational \(\cO\)-Mackey functors which generalize that of Greenlees and May \cite{gremay95} and Th\'evanez and Webb \cite{tw90}. In work in progress \cite{paper3}, the authors apply these results to  rational \(\cO\)-spectra, providing both an Adams spectral sequence to calculate maps of spectra in terms of rational \(\cO\)-Mackey functors, and an algebraic model: an equivalence of infinity-categories between  rational \(\cO\)-spectra and chain complexes of rational \(\cO\)-Mackey functors.  This gives an extrinsic motivation to calculate the global dimension of  the category of rational \(\cO\)-Mackey functors: if the global dimension is known to be \(k\), then one knows the Adams spectral sequence will collapse at page \((k+1)\),  bounding the complexity of the calculation.

\subsection*{Main results}

Our main result on global dimension of rational \(\cO\)-Mackey functors assumes that the group \(G\) is abelian and the transfer system \(\cO\) is disk-like:  that is, \(\cO\) is generated by some set of transfers  \(\{H \rightarrow G\}\), where \(H\leq G\). It is phrased in terms of the dimension of the inseparability classes of \(\cO\) (see Definitions \ref{def:insep} and \ref{def:n_h}). Recall that \(H \in {\SubOG}\) if \(H\rightarrow G\) in \(\cO\).

\begin{MainTheorems}[{Theorem \ref{thm:main_thm_disklike}}]
    Let \(\cO\) be a disk-like transfer system for a finite abelian group \(G\). The global dimension of the category of rational \(\cO\)-Mackey functors is the maximum over all subgroups \(H \in {\SubOG}\) of \(\dim ([H]^\cO)\) (see Definition \ref{def:n_h}). 
\end{MainTheorems}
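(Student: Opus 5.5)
The plan is to reduce, via the splitting results of \cite{paper1}, to a computation for a single block, and then to identify each block with modules over an incidence algebra of a finite poset.

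\textbf{Step 1: splitting.} For a disk-like transfer system, \cite{paper1} splits the category of rational \(\cO\)-Mackey functors as a finite product of blocks, one for each inseparability class \([H]^\cO\) with \(H\in\SubOG\). The global dimension of a finite product of abelian categories is the maximum of the global dimensions of the factors, since \(\ext\) splits componentwise. It therefore suffices to show that the block indexed by \([H]^\cO\) has global dimension exactly \(\dim([H]^\cO)\).

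\textbf{Step 2: identify a block with incidence-algebra modules.} Because \(G\) is abelian, every subgroup is normal. Rationally, the block for \([H]^\cO\) should be equivalent to representations of the poset \([H]^\cO\), ordered by inclusion or by the restriction direction. Each vertex carries \(\bQ[G/K]\)-modules and each arrow a restriction map, with no transfers inside a class. Since \(G\) is abelian, the Weyl group actions can be split off using idempotents of \(\bQ[G]\); by Maschke the group-ring parts are semisimple. The block should then decompose further into a product of module categories over \(k\otimes \bQ P\), where \(P=[H]^\cO\) and \(k\) runs over simple commutative \(\bQ\)-algebras arising from characters. Here \(k P\) denotes the incidence algebra. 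This is the step where commutativity of \(G\) is used essentially. For nonabelian \(G\) the twisted actions would not split, which is why the theorem is stated only in the abelian case.

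\textbf{Step 3: global dimension of incidence algebras.} For a finite poset \(P\) and a field \(k\), the global dimension of \(kP\) equals the maximum over intervals \([x,y]\subseteq P\) of the dimension of the order complex of the open interval \((x,y)\), plus \(2\). Comparable pairs with empty open interval contribute \(1\), and if \(P\) has no comparable pairs the global dimension is \(0\). This is a classical result due to Mitchell and to Cheng. The remaining task is to match this with Definition \ref{def:n_h}: I expect \(\dim([H]^\cO)\) to be designed exactly as this quantity, or as an equivalent invariant such as the length of chains. Two facts should make the comparison manageable. First, inseparability classes in a disk-like system are intervals in the subgroup lattice, since they are cut out by the generating transfers \(K\to G\). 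Second, intervals in the subgroup lattice of an abelian group are modular lattices, whose open intervals are Cohen–Macaulay of dimension equal to rank minus 2. Consequently the global dimension is the length of the longest chain, which is the rank of the class.

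\textbf{Main obstacle.} The hardest part is Step 2: verifying that the rational block is genuinely Morita equivalent to \(\prod_k k[H]^\cO\)-modules. In particular, one must check that the restriction and conjugation relations reduce, after applying character idempotents, to exactly the incidence-algebra relations, with no extra relations from the double coset formula and no hidden transfers between members of the class. I would first try to build explicit projective generators: representable functors restricted to the block, cut down by idempotents. Then I would compute their endomorphism ring directly from the span description, checking that the Hom-space between two projectives is one-dimensional over \(k\) precisely when the corresponding subgroups are comparable. Once that is settled, Step 3 is a citation plus the lattice-theoretic comparison with Definition \ref{def:n_h}.
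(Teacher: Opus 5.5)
Steps 1 and 2 are sound. Step 2 is a legitimate and more direct substitute for the paper's quintessential-localisation sandwich. The splitting theorem already rules out the ``hidden transfers'' you worry about, so a block is simply a presheaf on \(P=\Sub_{\langle H\rangle}^{\cO}\) of \(\bQ[G]\)-modules with \(K\) acting trivially at \(K\). The central idempotents of \(\bQ[G]\) then split it as \(\prod_\chi k_\chi P_\chi\dashMod\). One correction: the factor for \(\chi\) lives on the down-set \(P_\chi=\{K\in P\mid K\le\ker\chi\}\), not on all of \(P\). This is harmless, because the trivial character gives \(\bQ P\) itself and a down-set has the same intervals as \(P\).

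The genuine gap is Step 3. The formula you quote is false: \(\max_{x<y}\bigl(\dim\abs{I(x,y)}+2\bigr)\) is just the height of \(P\), which is only an upper bound (Lemma \ref{rem:presheaves_upperBound}). For the chain \(\Sub_{C_{p^2}}\) it gives \(2\). Yet \(I(C_{p^2},e)\) is a point, so \(\ext^2(S_{C_{p^2}},S_e)\cong\widetilde{H}^0(\mathrm{pt};\bQ)=0\), and the global dimension is \(1\) (Example \ref{ex:cyclicpower}). What controls the global dimension is the top \emph{non-vanishing} reduced cohomology of open intervals (Theorem \ref{thm:IZtheorem}). Cohen--Macaulayness of modular lattices does not rescue your formula: it concentrates cohomology in the top degree, but that top group can be zero, as it is for every chain. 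Consequently your conclusion that the global dimension is the length of the longest chain in the class contradicts Definition \ref{def:n_h}. For \(G=C_{p^n}\) with the trivial transfer system, the single class is all of \(\Sub_G\), of chain length \(n\), whereas \(\dim([G]^{\cO})=1\). Your claim that classes are intervals is also false: in the \(C_{p^2q}\) example of Section \ref{sec:examples}, the class of \(G\) has incomparable minimal elements \(C_p\) and \(C_q\). Only the convexity of Remark \ref{rmk:relfamily} is actually needed there.

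The missing idea is to determine exactly which intervals carry cohomology. The paper does this with the Frattini argument (Theorem \ref{thm:Frattini_interval}): \(\abs{I(L,K)}\) is contractible unless \(\Phi(L)\le K\). When \(\Phi(L)\le K\), the quotient \(L/K\) has squarefree exponent. Lemma \ref{lem:interval_sphere} and the product-over-primes argument of Theorem \ref{thm:gldim_abelian_group} then show the pair contributes a nonzero \(\ext\) exactly in degree equal to the number of prime-power cyclic factors of \(L/K\). In your lattice-theoretic language, you need that the M\"obius function \(\mu(K,L)\) is nonzero if and only if \(L\) is a join of atoms of \([K,L]\).

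Finally, convexity of the class puts \(K\Phi(L)\) in the class whenever \(K\le L\) are in it. Since \(L/K\Phi(L)\cong (L/K)/\Phi(L/K)\) has as many prime-power cyclic factors as \(L/K\), the maximum is precisely \(\dim([H]^{\cO})\). Without this step your argument proves only the height upper bound, and it predicts the wrong value.
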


To prove this theorem, we apply work of the first, third and fourth author  on splittings of rational \(\cO\)-Mackey functors in \cite{paper1}.  That paper shows that the category of rational \(\cO\)-Mackey functors for disk-like \(\cO\) splits into categories that look like coefficient systems---meaning there are no transfer maps, only restrictions and \(G\)-actions---on subcategories of \(\orb_G\). Hence, we consider the case of coefficient systems (the case of the trivial transfer system) for \(G\) in detail. 

\begin{MainTheorems}[{Corollary \ref{cor:gldim_coeffsyst}}]
    For \(G\) a finite abelian group, the global dimension of rational \(G\)-coefficient systems is the number of prime-power cyclic factors of \(G\).   
\end{MainTheorems}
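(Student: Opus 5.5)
Rational \(G\)-coefficient systems for abelian \(G\) are functors \(\orb_G^{\op}\to \bQ\dashMod\). Since \(G\) is abelian, \(\orb_G\) has one object \(G/H\) for each subgroup \(H\), with \(\mathrm{Hom}(G/H,G/K)\cong G/K\) when \(H\le K\) and empty otherwise. The plan is to reduce this category to modules over an incidence algebra of the subgroup lattice and then compute the global dimension of that algebra.

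\textbf{Step 1: reduce to an incidence algebra.} First I will split off the group actions. A coefficient system \(M\) assigns to \(H\) a \(\bQ[G/H]\)-module \(M(G/H)\), and the restriction maps \(M(G/K)\to M(G/H)\) are \(G\)-equivariant. By Maschke's theorem each \(M(G/H)\) decomposes into isotypic components indexed by irreducible rational characters of \(G/H\), equivalently by Galois orbits of complex characters \(\chi\) of \(G\) with \(H\le\ker\chi\). Restriction \(M(G/K)\to M(G/H)\) is inflation-compatible, so it sends the \(\chi\)-component to the \(\chi\)-component. Hence the category splits as a product over Galois orbits of characters \(\chi\) (equivalently, cyclic quotients \(G/\ker\chi\)). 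The factor for \(\chi\) is equivalent to representations of the poset \(\{H : H\le \ker\chi\}\), i.e.\ subgroups of \(C=\ker\chi\), with values in modules over the division algebra \(\bQ(\chi)\). Since \(\bQ(\chi)\) is a field, this factor is the module category of the incidence algebra of the lattice \(\Sub(C)\) (with the appropriate variance) over a field. The global dimension of the whole category is then the maximum over \(C\) of \(\gldim\) of the incidence algebra of \(\Sub(C)\). The subgroups \(C\) that occur are exactly those with \(G/C\) cyclic, and this family includes \(C=G\).

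\textbf{Step 2: compute incidence algebra dimensions.} For a finite poset \(P\), the global dimension of its incidence algebra over a field is the maximum, over intervals \([x,y]\), of the degree of the top nonvanishing reduced homology of the open interval \((x,y)\), shifted by \(2\). Equivalently, \(\ext^n(S_x,S_y)\cong \tilde H^{n-2}((x,y))\) for \(x<y\), and \(\ext^1\) is nonzero exactly when \(x\lessdot y\). For \(\Sub(A)\) with \(A\) abelian, every interval \([H,K]\) is isomorphic to \(\Sub(K/H)\). The key input is that the reduced homology of the proper part of \(\Sub(B)\) is nonzero only when \(B\) is elementary abelian (a product of its \(p\)-parts with each part elementary) or trivial. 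This follows because the subgroup lattice of an abelian group is complemented/modular: the Möbius function vanishes unless \(B\) has trivial Frattini subgroup, and by Björner's crosscut/homotopy argument the proper part is contractible otherwise. When \(B\) is elementary abelian of total rank \(r\), it is a product of subspace lattices, and its proper part is a wedge of \((r-2)\)-spheres. That gives \(\ext^{r}\) between the endpoints. So the global dimension equals the maximum rank \(r\) of an elementary abelian section \(K/H\). For a group \(A\) this is the number of prime-power cyclic factors: take \(K=A\) and \(H=\Phi(A)\), where \(A/\Phi(A)\) has rank equal to the number of cyclic factors of prime-power order.

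\textbf{Step 3: assemble.} Each \(C\le G\) has at most as many prime-power cyclic factors as \(G\), and \(C=G\) attains this count. Therefore the maximum over \(C\) is exactly the number of prime-power cyclic factors of \(G\).

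\textbf{Main obstacle.} I expect the hardest step to be the homological computation in Step 2. This means correctly identifying \(\ext\) between simples via order-complex homology (the Cibils/Mitchell formula) and proving vanishing for non-Frattini-free intervals. I would first try a direct projective resolution of simples \(S_H\), using the Koszul-type resolution from subspace lattices for the elementary abelian case. For vanishing, I would use that \(\ext^n(S_H,S_K)\) depends only on \(\Sub(K/H)\), and that for \(K/H\) not elementary abelian the proper part is conically contractible via \(L\mapsto L\Phi(K/H)\).
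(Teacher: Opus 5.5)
Your argument is correct, but it takes a different route from the paper at both stages.

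\textbf{Reduction to incidence algebras.} The paper never decomposes \(\Coeff_\bQ^{\,G}\) directly. It sandwiches it between \(\PreSh_\bQ(\Sub_G)\) and \(\PreSh_\bQ(\Sub_G)[G]\) using quintessential localisations (Proposition \ref{prop:quintlocalnoaction}, which needs the cofinality Lemma \ref{lem:cofinalnightmare} and the orbits/fixed-points adjoints). From Lemma \ref{lem:adjunction_and_global_dim} it obtains \(\gldim\PreSh_\bQ(\Sub_G)\le\gldim\Coeff_\bQ^{\,G}\le\gldim\PreSh_\bQ(\Sub_G)[G]\). Only then does it split \(\bQ[G]\) to see that the outer terms agree (Theorem \ref{thm:equiv_pres=pres}). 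You instead apply the Wedderburn splitting to coefficient systems themselves. You observe that the Weyl condition merely truncates each block to the down-set \(\Sub_C\). This gives an honest equivalence \(\Coeff_\bQ^{\,G}\simeq\prod_{C}\PreSh_{\bQ(\chi_C)}(\Sub_C)\) over \(C\) with \(G/C\) cyclic, whose \(C=G\) block is exactly the image of \(\underline{\phi}^*\). Your version is more explicit and bypasses the adjunction machinery. It also adapts immediately to \(\Coeff_{\mcS,\bQ}^{\,G}\), for instance for Theorem \ref{thm:one_cloud}. The paper's formulation is built from general adjunction facts that it reuses for monotonicity and hopes to extend beyond abelian groups.

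\textbf{Computing the incidence algebra's dimension.} Both arguments use Igusa--Zacharia and the Frattini contraction. The paper works one prime at a time, via \(IA(\mcP)\otimes IA(\mcQ)\cong IA(\mcP\times\mcQ)\) and Auslander's additivity. It bounds above by heights and below by retracting \(\Sub_{(C_p)^k}\) onto a Boolean lattice (Lemma \ref{lem:interval_sphere}). You treat all primes at once via the homotopy type of a product of subspace lattices, which is a geometric lattice and so a nonempty wedge of \((r-2)\)-spheres. That is heavier topological input, but it computes every \(\ext\) between simples.

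\textbf{A step to make explicit.} You only exhibit a Frattini-free section of the right rank. You also need that every Frattini-free section \(K/H\) of \(A\) has rank at most the number of prime-power cyclic factors of \(A\). This holds because \(K/H\) is then a quotient of \(K/\Phi(K)\). Moreover, the number of cyclic \(p\)-power factors of \(K\) equals the \(\F_p\)-dimension of its \(p\)-torsion, which is at most that of \(A\).

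A smaller point: \(\Sub_A\) is modular but not complemented in general. The crosscut or conical-contraction argument is what you actually use.
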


This proof makes use of a surprising link to incidence algebras over posets.  The rational incidence algebra of a poset is the quotient of the rational path algebra where parallel paths are identified; see Spiegel and O'Donnell, \cite{SpiegelODonnell}. The link is that if one forgets the conjugation maps on a coefficient system,  then one obtains a rational presheaf on \(\Sub_G\), which is precisely the data  of a module over the rational incidence algebra of \(\Sub_G\).  In order to use this link to prove the theorem, we assume the group is abelian, as that simplifies the structure of the conjugation maps of the coefficient system. 
Thus, we are interested in the global dimension of rational incidence algebra of \(\Sub_G\),  for which we use the methods and results of Igusa and Zacharia, \cite{IZ90}.  Interestingly, there are few general formulas for the global dimension of incidence algebras over a poset. However, we obtain the following, which we believe to be new. 

\begin{MainTheorems}[{Theorem \ref{thm:gldim_abelian_group}}]
    For \(G\) a finite abelian group, the global dimension of the rational incidence algebra on \(\Sub_G\) is the number of prime-power cyclic factors of \(G\).
\end{MainTheorems}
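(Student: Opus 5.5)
The plan is to compute the global dimension of the incidence algebra $\Q\Sub_G$ directly, using the fact that for a finite poset $P$ and a field $k$, the global dimension of $kP$ is governed by simplicial homology of open intervals (Igusa--Zacharia \cite{IZ90}). Concretely, for $x<y$ in $P$ we have
\[
\ext^n_{kP}(S_x,S_y)\cong \tilde H^{n-2}\big((x,y);k\big),
\]
where $S_x,S_y$ are the simple modules and $(x,y)$ denotes the order complex of the open interval. The adjacent cases are handled separately: if $y$ covers $x$, the group $\ext^1$ is $k$; and $\ext^0$ is nonzero only when $x=y$. Since $kP$ is finite dimensional, its global dimension is the supremum of the $n$ with $\ext^n(S_x,S_y)\neq 0$. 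So the gl.dim of $\Q\Sub_G$ is
\[
\max_{K\le H} \big(2+\max\{ m : \tilde H^m((K,H);\Q)\neq 0\}\big),
\]
with the adjacent and equal cases contributing $1$ and $0$ respectively.

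Next, reduce the intervals to something computable. For $G$ abelian, the interval $[K,H]$ in $\Sub_G$ is isomorphic to $\Sub_{H/K}$, by the correspondence theorem. Moreover, $H/K$ is a subquotient of $G$, so the number of its prime-power cyclic factors, call it $r(H/K)$, is at most $r(G)$. The whole problem therefore becomes computing the reduced homology of the proper part of $\Sub_A$ for a finite abelian group $A$, namely the open interval $(1,A)$.

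The main step is this homology computation, and I expect it to be the main obstacle. By Crapo's complementation theorem, or simply by Möbius function and homotopy arguments, the proper part of a lattice is contractible unless the top element is a join of atoms. This happens exactly when $A$ is elementary abelian on each Sylow, i.e.\ $A=\prod_p (\mathbb Z/p)^{r_p}$. Furthermore, $\Sub_A\cong\prod_p \Sub_{A_p}$, and the proper part of a product of bounded posets is, up to homotopy, a join (Quillen/Walker), up to a suspension shift. For $(\mathbb Z/p)^r$ the lattice is the subspace lattice of $\F_p^r$, whose proper part is a wedge of $(r-2)$-spheres (Solomon--Tits), and in particular is nonzero in top degree. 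Joining these, the proper part of $\Sub_A$ for squarefree-exponent $A$ with $s=\sum r_p$ has nonvanishing $\tilde H^{s-2}$ and nothing above. Hence $\ext^s(S_1,S_A)\neq 0$, and every nonzero $\ext^n(S_K,S_H)$ has $n=r(H/K)$ when $H/K$ is elementary (squarefree exponent) and vanishes otherwise. One has to check the small cases $s=1,2$ against the adjacency convention, but they fit the formula $n=s$.

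Finally, compute the maximum. Every interval gives $n\le r(H/K)\le r(G)$. To attain $r(G)$, take $K=\Phi(G)$, the Frattini subgroup, and $H=G$. Then $G/\Phi(G)=\prod_p(\mathbb Z/p)^{r_p}$, and $r_p$ equals the number of cyclic $p$-power factors of $G$, so the sum of the $r_p$ is exactly the number of prime-power cyclic factors. This gives global dimension $r(G)$.

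The main care points are these. First, one must verify that the Igusa--Zacharia formula applies as stated, which is where I would lean on \cite{IZ90}. Second, one must handle the degree shift in the join of proper parts of a product, where the dimensions add as $(a-2)+(b-2)+2=(a+b)-2$, consistent with the formula.
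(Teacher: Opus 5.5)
Your argument is correct, but it takes a genuinely different route from the paper's.

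\textbf{Your route.} You determine the homotopy type of every open interval $(K,H)\cong\Sub_{H/K}$ outright.
\begin{enumerate}
\item It is contractible unless $H/K$ has squarefree exponent. Your join-of-atoms argument is the order-dual of the paper's Theorem~\ref{thm:Frattini_interval}, which uses the meet $\Phi(H)K$ of the coatoms instead; for abelian groups both detect the same condition.
\item Otherwise, the product formula $\overline{P\times Q}\simeq\Sigma(\overline{P}*\overline{Q})$ for proper parts, together with Solomon--Tits, makes it a wedge of $(r(H/K)-2)$-spheres.
\end{enumerate}
For step 1, Crapo's theorem alone only gives vanishing of the M\"obius function, i.e.\ of the reduced Euler characteristic. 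What you actually need is the homotopy version, e.g.\ the poset map $x\mapsto x\wedge a$ with $a$ the join of the atoms.

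\textbf{The paper's route.} It never computes the homotopy type of the subspace lattice of $\F_p^k$.
\begin{enumerate}
\item For $(C_p)^{\times k}$ it gets the lower bound from a retraction of $\Sub_G$ onto the Boolean sublattice $(\Sub_{C_p})^{\times k}$. The induced $r^*$ is fully faithful and exact with an exact adjoint, so the Boolean lattice's global dimension $k$ is a lower bound. The upper bound comes from the height.
\item It passes to abelian $p$-groups via the Frattini subgroup.
\item It combines different primes algebraically, via $IA(\mcP)\otimes_\Q IA(\mcQ)\cong IA(\mcP\times\mcQ)$ and Auslander's additivity $\gldim(A\otimes_\Q B)=\gldim A+\gldim B$. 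You use joins at this step instead.
\end{enumerate}

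\textbf{What each buys.} Your approach imports heavier topological combinatorics, but it yields more. The group $\ext^*$ between two simples vanishes unless $H/K$ has squarefree exponent. If $H/K\cong\prod_p (C_p)^{r_p}$, it is concentrated in the single degree $r(H/K)=\sum_p r_p$, with dimension $\prod_p p^{\binom{r_p}{2}}$ (the number of spheres). Corollary~\ref{cor:abelianfrattini} then falls out immediately. The paper only needs non-vanishing in the top degree, so it avoids both Solomon--Tits and the product/join theorem.

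\textbf{A cosmetic point.} In the paper's presheaf convention the non-zero groups are $\ext^n(S_x,S_y)$ with $y<x$ (Theorem~\ref{thm:IZtheorem}), the reverse of what you wrote. This is just the $\mcP$ versus $\mcP^{\op}$ convention. It does not affect the global dimension, since an interval has the same order complex in $\mcP$ and in $\mcP^{\op}$.
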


A final result in the disk-like abelian case is that the global dimension is  monotonically decreasing with regards to inclusions of transfer systems.  We conjecture the analogue holds in the general setting in Conjecture \ref{conj:monotone}. 

\begin{MainTheorems}[{Theorem \ref{thm:monotonicity}}]
Suppose \(\cO_1 \le \cO_2\) is an inclusion of disk-like transfer systems for an abelian group \(G\). Then 
\[
\gldim(\cO_2\mhyphen\Mackey_\bQ^G) \leq \gldim (\cO_1\mhyphen\Mackey_\bQ^G).
\]
\end{MainTheorems}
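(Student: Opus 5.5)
Monotonicity will follow from the first main theorem. By Theorem \ref{thm:main_thm_disklike}, for a disk-like transfer system \(\cO\) on the abelian group \(G\) we have
\[
\gldim(\OMackey^G_\bQ)=\max_{H\in\SubOG}\dim([H]^\cO).
\]
So it suffices to prove the following comparison. Suppose \(\cO_1\le\cO_2\) are disk-like and \(H\in\Sub_G^{\cO_2}\). Then we want some \(K\in\Sub_G^{\cO_1}\) with
\[
\dim([H]^{\cO_2})\le\dim([K]^{\cO_1}).
\]
I would do this in three steps.

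\emph{Step 1.} Recall the inseparability classes from Definitions \ref{def:insep} and \ref{def:n_h}. For \(\cO\) disk-like, \(\SubOG\) indexes the splitting of \cite{paper1}. The class \([H]^\cO\) is a set of subgroups \(L\le H\) that cannot be separated from \(H\) by transfers, roughly those \(L\) that do not transfer up to \(H\). I expect its dimension to be the global dimension of the corresponding coefficient-system block. By Corollary \ref{cor:gldim_coeffsyst} and Theorem \ref{thm:gldim_abelian_group}, this is governed by the number of prime-power cyclic factors of a suitable subquotient of \(H\).

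\emph{Step 2.} Since \(\cO_1\subseteq\cO_2\), every transfer of \(\cO_1\) is a transfer of \(\cO_2\). Hence two subgroups separated in \(\cO_1\) are still separated in \(\cO_2\), so inseparability in \(\cO_2\) implies inseparability in \(\cO_1\). Now take \(H\in\Sub_G^{\cO_2}\). Let \(K\) be the \(\cO_1\)-representative whose class contains \(H\); this should be the minimal \(K\ge H\) with \(K\to G\) in \(\cO_1\), which exists by the disk-like hypothesis. Then \([H]^{\cO_2}\) should sit inside \([K]^{\cO_1}\) as a subposet of the same type. Concretely, \(H/L\) for \(L\in[H]^{\cO_2}\) should appear as a subquotient of \(K/L'\) for the relevant members \(L'\) of \([K]^{\cO_1}\).

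\emph{Step 3.} Pass from this inclusion of classes to the inequality of dimensions. The dimension is the number of prime-power cyclic factors of a finite abelian group. That count is monotone under taking subgroups and quotients, since the minimal number of generators of each \(p\)-part does not increase. This gives the inequality for each \(H\), and taking maxima proves the theorem.

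The main obstacle is Step 2: identifying the \(\cO_1\)-class containing the \(\cO_2\)-class exactly, and checking that the relevant abelian group for \(\cO_2\) really is a subquotient of the one for \(\cO_1\). This is not just a containment of underlying sets, because the dimension is computed from a poset structure. I would first work out the explicit form of \([H]^\cO\) for a disk-like \(\cO\) generated by \(\{H_i\to G\}\). I expect it to be an interval \([N,H]\) in \(\Sub_G\), with \(N\) the largest subgroup of \(H\) admitting no proper transfer. If so, \(\dim([H]^\cO)\) counts the cyclic factors of \(H/N\), and enlarging \(\cO\) only shrinks such intervals, which gives the inequality directly.
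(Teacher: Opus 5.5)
Your route is the paper's first proof. You reduce via Theorem~\ref{thm:main_thm_disklike} to comparing $\dim([H]^{\cO_2})$ with $\dim([K]^{\cO_1})$, and you obtain this from the fact that the $\cO_2$-inseparability partition refines the $\cO_1$-partition. Your justification of the refinement is correct, and more self-contained than the paper's, which cites \cite{paper1}. Since $\Sub_G^{\cO_1}\subseteq\Sub_G^{\cO_2}$, if $\abs{(G/L)^J}=\abs{(G/L)^{J'}}$ for all $L\in\Sub_G^{\cO_2}$, then it holds for all $L\in\Sub_G^{\cO_1}$. Hence $[H]^{\cO_2}\subseteq[K]^{\cO_1}$ for the $K\in\Sub_G^{\cO_1}$ whose class contains $H$. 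A minor correction: the minimal $K\ge H$ in $\Sub_G^{\cO_1}$ exists because $\Sub_G^{\cO}$ contains $G$ and is closed under intersections (restriction plus transitivity). This holds for every transfer system, not because of the disk-like hypothesis.

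The problem is the step you call the ``main obstacle''. Once you have $[H]^{\cO_2}\subseteq[K]^{\cO_1}$, there is nothing left to check. By Definition~\ref{def:n_h}, $\dim([H]^{\cO})$ is the maximum number of prime-power cyclic factors of $L/L'$ over pairs $L'\le L$ in $\Sub_{\langle H\rangle}^{\cO}$. The order on these posets is simply subgroup inclusion, inherited from $\Sub_G$. So every pair indexing the maximum for $[H]^{\cO_2}$ also indexes the maximum for $[K]^{\cO_1}$, and the inequality follows at once. The poset structure adds nothing beyond containment of underlying sets. Your Step 3 also works if you take $L'=L$: then $H/L\le K/L$ with $L,K\in[K]^{\cO_1}$. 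But no subquotient argument is needed.

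The fallback in your last paragraph would fail: inseparability classes are not intervals $[N,H]$ in general. In the first example of Section~\ref{sec:examples}, $G=C_{p^2q}$ and $\cO$ is generated by $e\to G$. There $\Sub_{\langle G\rangle}^{\cO}$ consists of all non-trivial subgroups and has two incomparable minimal elements, $C_p$ and $C_q$. Its dimension $2$ comes from $G/C_p\cong C_{pq}$, while $G/C_q\cong C_{p^2}$ gives only $1$. So ``$\dim$ counts the cyclic factors of $H/N$'' is not a valid description, which is exactly why the paper defines $\dim$ as a maximum. Drop the interval picture and conclude directly from the definition; your argument then coincides with the paper's.
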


In the case of arbitrary finite groups and disk-like transfer systems  we obtain an upper bound for the global dimension in terms of the maximum height of the inseparability classes; see Proposition~\ref{prop:uppedbound}.

\subsection*{Structure of the paper}
In Section \ref{sec:background}, we review the relevant definitions and the splitting result of \cite{paper1}. 
We start with the simplest case, the trivial transfer system, and provide the upper bound of
Proposition \ref{prop:uppedbound} for coefficient systems in Section \ref{sec:upperboundcoeff}.
In Section \ref{sec:incalg} we study incidence algebras and we relate them to coefficient systems using the 
adjunction results of Section \ref{sec:adjunctions}. Using our relation to incidence algebras 
we prove three of our main results in Section \ref{sec:gldimmackey}.
We apply these results to obtain the monotonicity result in Section \ref{sec:monotonicity} 
and further provide an alternative proof of monotonicity independent of the global dimension calculation.
We end the paper with three conjectures that arise from our approach in Section \ref{sec:conjectures}
and additional illustrative examples in Section \ref{sec:examples}.

\subsection*{Acknowledgements} 
The project was begun at a SQuaRE at the American Institute of Mathematics. The authors thank AIM for providing a supportive and mathematically rich environment.
The authors would like to thank the Isaac Newton Institute for Mathematical Sciences, Cambridge, for support and hospitality during the programme Equivariant Homotopy Theory in Context where work on this paper was undertaken. This work was supported by EPSRC grant no EP/Z000580/1 and the Simons Foundation, Award SFI-MPS-T-Institutes-00006117. The second author was partially supported by the United States National Science Foundation through grants DMS 2104300 and 2052849. The fourth author was supported by the Nederlandse Organisatie voor Wetenschappelijk Onderzoek (Dutch Research Council) Vidi grant no VI.Vidi.203.004.

\section{Definitions and a splitting result}\label{sec:background}
\begin{definition}\label{def:subg}
    Let \(G\) be a finite group. The poset of subgroups of \(G\) and inclusions will be denoted \(\Sub_G\).  
    The quotient of this poset under the action of \(G\) by conjugation will be denoted \({\Sub_G}/G\). 
    This poset has objects the conjugacy classes of subgroups of \(G\) and morphisms \((K) \to (H)\)
    whenever some conjugate of \(K\) is contained in \(H\).
\end{definition}    

\begin{definition}\label{def:transfersystem}
    Let \(G\) be a finite group. A \emph{\(G\)-transfer system} \(\cO\) is a partial order on \(\Sub_G\), denoted by arrows \(\to\), which refines the subset relation and which is closed under 
    \begin{itemize}
        \item conjugation: if \(K\to H\), then 
        \(gKg^{-1} \to gHg^{-1}\) for every
group element \(g \in G\), and
        \item restriction: if \(K\to H\) and \(L \subseteq H\), then \(K \cap L \to L\).
    \end{itemize}
 If  \(K\to H\), then we say that the \(H\)-set \(H/K\) is \(\cO\)-\emph{admissible}.
\end{definition}

Associated to any transfer system is a naturally defined version of Mackey functors where the transfers are parameterized by the maps in the transfer system. To make this precise, we recall a version of the Lindner category  starting with a subcategory \(\Fin^{G}_{\cO}\) of the category of finite \(G\)-sets \(\Fin^G\) that structures the transfers. In the language of \cite{BH-Bi-incompleteT}, 
\(\Fin^{G}_{\cO}\) is the indexing category
associated to \(\cO\).
\begin{definition}
    Let \(\Fin^{G}_{\cO}\) be the wide subcategory of the category \(\Fin^G\) of finite \(G\)-sets where a map \(f\colon S\to T\) of finite \(G\)-sets is in \(\Fin^G_{\cO}\) if and only if for all \(s\in S\), the following map is
    in \(\cO\):
    \[
        \Stab(s)\to \Stab\!\left(f(s)\right).
    \]
\end{definition}
This defines the smallest wide, pullback stable, finite coproduct complete subcategory of the category of finite \(G\)-sets that contains all morphisms \(G/K \to G/H\) whenever \(K \to H\) is in \(\cO\).

\begin{definition}[{\cite[Definition 7.21]{BH-Bi-incompleteT}}]
Define \(\cA^{\cO}\), the Lindner category of \(\cO\), to have objects the finite \(G\)-sets and morphisms the isomorphism classes of spans 
\[
\cA^{\cO}(S,T) = \big\{ [S \leftarrow U \xrightarrow{h} T] \mid h \in \Fin^{G}_{\cO}\big\}.
\]
\end{definition}

The category \(\cA^{\cO}\) is pre-additive, and the categorical biproduct is given by the disjoint union of finite \(G\)-sets.
When we take the complete transfer system consisting of all subgroup inclusions, 
\(\cA^{\cO}\) is the usual Lindner category of spans of \(G\)-sets.

\begin{definition}[{\cite[Definitions 7.21 and 7.24]{BH-Bi-incompleteT}}]\label{def:OMackeyFunctors}
    Let \(G\) be a finite group and \(\cO\) be a transfer system for \(G\). 
    An \(\cO\)-Mackey functor is a product preserving functor 
    \[
    M\colon \cA^{\cO} \to \Ab.
    \] 
    A map of \(\cO\)-Mackey functors is a natural transformation. We will use notation \(\OMackey^G\) for this category.
\end{definition}

The simplest context is where we consider Mackey functors with no  transfer maps, that is, incomplete Mackey functors for a trivial transfer system. These are known as coefficient systems.

Recall the \emph{orbit category} of \(G\), \(\orb_G\), is the category with objects the \(G\)-sets \(G/H\) for \(H\) a subgroup of \(G\) and morphisms the \(G\)-equivariant maps. Observe that when \(\cO\) is the trivial transfer system, the category \(\Fin_\cO^G\) is the finite coproduct completion of the maximal subgroupoid of \(\orb_G\). Thus product preserving functors out of \(\cA^{\cO}\) into \(\Ab\) are equivalent to contravariant functors from \(\orb_G\) to \(\Ab\), since \(\Ab\) is additive.

\begin{definition}\label{def:coeff}
    Let \(G\) be a finite group. A \emph{rational coefficient system} \(\mM\) for \(G\) is a  contravariant functor from the orbit category of \(G\) to \(\bQ\)-modules.  We denote the category of such objects and natural transformations by \(\Coeff_\bQ^{\,G}\).
\end{definition}

\begin{definition}\label{def:Sub_OG}
	If \(\cO\) is a transfer system, let \({\SubOG}\) be the sub-poset of \(\Sub_G\) consisting of subgroups \(H\) such that \(H\to G\) in \(\cO\). 
    As in Definition \ref{def:subg}, the quotient of this poset under the conjugation action of \(G\) will be denoted \(\SubOG/G\).
\end{definition}

When \(\cO\) is the complete transfer system, \(\SubOG = \Sub_G\) and  \({\SubOG}/G = {\Sub_G}/G\). 

\begin{definition}\label{def:insep}
    Two subgroups \(J\) and \(K\) are \emph{inseparable} if and only if for every \(L\in {\SubOG}\), we have
    \[
        \abs{(G/L)^J}=\abs{(G/L)^{K}}.
    \]
\end{definition}

Inseparability for \(\cO\) is an equivalence relation on \(\Sub_G\) and we  use the notation \([K]^{\cO}\) for the equivalence class of \(K\). Moreover, we know by \cite[Corollary 3.24]{paper1} that each inseparability class corresponds to a conjugacy class of a subgroup \(H\) such that \(H\to G\) in \(\cO\). Thus we can use a choice of \(H \in {\SubOG}\) to denote an inseparability class \([H]^\cO\).  Moreover, when \(G\) is abelian, this result also implies that when \(H\to G\), all elements of \([H]^\cO\) are subgroups of \(H\).
\begin{remark}\label{rmk:relfamily}
{\cite[Corollary 3.33]{paper1}} provides the following useful closure property of the inseparability classes $[H]^{\mcO}$: if $J\in[H]^{\mcO}$ and $J\subseteq K\subseteq H$, then $K\in[H]^{\mcO}$.  This follows from the fact that $[H]^{\mcO}$ is a relative family.
\end{remark}

\begin{example}\label{ex:C_pq_inseparability}
    Consider \(C_{pq}\) and the transfer system \(\cO\) consisting of transfers \(e \rightarrow G\), \(e\rightarrow C_{q}\), and \(e\rightarrow C_{p}\). Then there are two inseparability classes for \(\cO\): \([e]^\cO = \{e\}\) and \([C_{pq}]^\cO = \{C_{pq}, C_{q}, C_{p}\}\). Notice that all the subgroups except for \(e\) are inseparable from \(C_{pq}\).
\end{example}

\begin{definition}\label{defn:inseparableposets} 
Let \(H\to G \in \cO\). Define a sub-poset \(\Sub_{\langle H\rangle}^{\cO}\) of \(\Sub_G\) by
\[
    \Sub_{\langle H\rangle}^{\cO}=\{K\mid K\in [H]^{\cO}, K\subseteq H\}.
\]
 Let \(N=N_G(H)\) be the normalizer of \(H\) in \(G\). Denote by \(\orb^N_{\langle H\rangle^\cO}\) the full subcategory of the orbit category of \(N\) spanned by the orbits \(N/J\) with \(J\in \Sub^{\cO}_{\langle H\rangle}\); that is, 
    \[
        \orb^N_{\langle H\rangle^{\cO}}=\langle N/J\mid J\in [H]^{\cO}\text{ and }J\subseteq H\rangle.
    \]
    Notice that if \(G\) is an abelian group the conditions \(K\subseteq H\) and \(J\subseteq H\) above are vacuous. 
\end{definition}

\begin{example}
     Suppose \(G=C_{pq}\) and \(\cO\) is the transfer system from Example \ref{ex:C_pq_inseparability}. Then \(\Sub_{\langle e\rangle}^{\cO}=\{e\}\) and \(\Sub_{\langle G\rangle}^{\cO}\) is of the form 
\begin{center}
\begin{tikzpicture}[scale=0.5]
      \node (A) at (0,4) {\(C_{pq}\)};
	 \node (B) at (-2,2) {\(C_{q}\)};
	 \node (C) at (2,2) {\(C_{p}\).};
%      \node (D) at (0,0) {\(e}\)};
      \draw[<-,] (A) to (B) ;
	 \draw[<-,] (A) to (C) ;
%	 \draw[<-,thick] (B) to (D) ;
%	 \draw[<-,thick] (C) to (D) ;
\end{tikzpicture}
\end{center}
\end{example}

Extending Definition \ref{def:coeff} to this context we have the following. 
\begin{definition}
    Let \(\mcS\) be a full subcategory of \(\orb_G\). 
    Define \(\Coeff_{\mcS}^{\,G}\)
    to be the category with objects the contravariant functors from \(\mcS\) to  abelian groups and morphisms the natural transformations  
    \[
    \Coeff_{\mcS}^{\,G}=\{{\mcS}^{\text{op}}\to\Ab\}
    \]
    Write \(\Coeff_{\mcS, \bQ}^{\,G}\) for the category of contravariant functors with values in \(\bQ\)-modules.     
    We call such functors rational \(G\)-coefficient systems on \(\mcS\).
Taking the group to be \(N=N_GH\) and \(\orb^N_{\langle H\rangle^{\cO}}\) to be the subcategory of \(\orb_N\) of Definition \ref{defn:inseparableposets}, we set
\[
\Coeff_{\langle H\rangle^{\cO},\bQ}^{\,N}
=
\Coeff_{\orb^N_{\langle H\rangle^{\cO}},\bQ}^{\,N}.
\]
\end{definition}
These rational coefficient systems describe rational \(\cO\)-Mackey functors when the transfer system \(\cO\) is \emph{disk-like}.
\begin{definition}
    A transfer system \(\cO\) for a finite group \(G\) is \emph{disk-like} if it is generated by a set of transfers of the form \(H \to G\) for varying \(H\leq G\).
\end{definition}
The key point is that for these transfer systems, after the splitting there are no transfers in any inseparability class.

\begin{theorem}[{\cite[Theorem D.]{paper1}}]\label{thm:splittingIncompleteMackey}
     Let \(\cO\) be a disk-like transfer system for a group \(G\). There is an equivalence of categories
     \[
        \OMackey_\bQ^G \cong \prod_{(H)\in {\SubOG}/G} \Coeff_{\langle H \rangle^{\cO},\bQ}^{\,N_G(H)}.
    \]
\end{theorem}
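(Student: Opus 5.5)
This is Theorem D of \cite{paper1}, and here we only sketch how one would prove it. The plan is to decompose the category of rational \(\cO\)-Mackey functors using idempotents. The rational Burnside-type endomorphism ring of \(G/G\) in \(\cA^{\cO}\) acts on every \(\cO\)-Mackey functor. Here this is the ring generated by spans \(G/G \leftarrow G/L \to G/G\) with \(L\in\SubOG\).

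\textbf{Step 1: marks and idempotents.} Use the mark homomorphism, sending \(G/L\) to the function \(J\mapsto \abs{(G/L)^J}\). After rationalizing, this subring is identified with functions on the set of inseparability classes, by Definition \ref{def:insep}. This gives orthogonal idempotents \(e_{[H]}\), one for each class \([H]^\cO\), summing to \(1\). By \cite[Corollary 3.24]{paper1} the classes are indexed by \((H)\in\SubOG/G\).

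\textbf{Step 2: a product decomposition.} The idempotents act naturally, because restrictions and the admissible transfers commute with the action of these spans by Frobenius reciprocity. Hence every \(M\) splits as \(M=\bigoplus e_{[H]}M\), which yields a product decomposition of \(\OMackey^G_\bQ\).

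\textbf{Step 3: identifying each factor.} Show that \(e_{[H]}\OMackey^G_\bQ\) is equivalent to \(\Coeff^{\,N_G(H)}_{\langle H\rangle^\cO,\bQ}\). The functor is evaluation at orbits \(G/J\) for \(J\in[H]^\cO\), \(J\subseteq H\), applied to \(e_{[H]}M\) and regarded as a functor on \(N\)-orbits.

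\textbf{Step 4: transfers vanish inside a class.} Disk-likeness is used here. Every non-identity transfer \(K\to K'\) in \(\cO\) is a restriction of some generator \(L\to G\). One checks that \(K\) and \(K'\) then lie in different inseparability classes: the generator \(L\) separates them, since \(\abs{(G/L)^K}\neq\abs{(G/L)^{K'}}\) in that case. Consequently, within one block the transfers are killed by the idempotent, or they factor through other blocks. Only restrictions and conjugations survive, which is exactly coefficient-system data.

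\textbf{Step 5: passing from \(G\) to the normalizer.} Show that the block is determined by its values on orbits with stabilizers in \([H]^\cO\) below \(H\), with the Weyl-type actions coming from \(N_G(H)\). This is an idempotent-splitting argument analogous to Th\'evenaz--Webb's reduction to \(\bQ[W_GH]\)-modules. The inverse functor is a suitable induction from \(N\) to \(G\), followed by multiplying by \(e_{[H]}\).

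The main obstacle is Step 5 together with the vanishing in Step 4. One must verify that the composite "restrict then induce" is the identity on \(e_{[H]}\)-parts. This requires a double coset (Mackey formula) computation in which all terms other than the \(N\)-terms are annihilated by \(e_{[H]}\). I would attack it by evaluating the marks on those terms and using the relative family property of Remark \ref{rmk:relfamily}.
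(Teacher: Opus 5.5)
The paper does not prove this statement; it cites \cite{paper1}. Your Steps 1--2 are sound and fit the framework the paper attributes to that source. The rational incomplete Burnside ring, spanned by the $[G/L]$ with $L\in\SubOG$, is a unital subalgebra of the ring of mark functions, hence a product of copies of $\bQ$ indexed by inseparability classes. It acts naturally on every $\cO$-Mackey functor through the spans $T\leftarrow G/L\times T\to T$, so the category splits.

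\textbf{The gap.} Steps 3--5 are the actual content of Theorem D, and they are not proved.

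First, Step 4 needs two repairs. A nontrivial transfer is $K'\cap L\to K'$ for some $L\in\SubOG$, not necessarily for a generator. For $C_p\times C_p$ with generators $C_p\times e\to G$ and $e\times C_p\to G$, the transfer $e\to G$ is not a restriction of a generator. Your separation argument does work verbatim for such $L$. Also, "factor through other blocks" is not a mechanism, since the $e_{[H]}$ are central. The correct statement is this: a span between two class orbits with a nontrivial transfer leg factors through some $G/K$, where $K$ is subconjugate to a class member but lies outside the class. By Remark~\ref{rmk:relfamily}, no subgroup of such a $K$ lies in $[H]^{\cO}$, so $e_{[H]}$ restricts to zero on $K$ and $e_{[H]}M(G/K)=0$.

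Even after these repairs, Step 4 only shows that restricting $e_{[H]}M$ to the class orbits produces a coefficient system. It gives neither full faithfulness nor essential surjectivity. Your claim that only restrictions and conjugations survive in the block is false globally. Take $G=C_{p^2}$ and $\cO$ generated by $C_p\to G$. Then $[C_p]^{\cO}=\{e,C_p\}$, yet $e_{[C_p]}M(G/G)\cong M(G/C_p)^{G}$, and $\mathrm{tr}_{C_p}^{G}$ survives as the norm. You must show that such values and transfers outside the class are determined by, and can be rebuilt from, the class data.

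\textbf{The proposed inverse.} Induction from $N$ followed by $e_{[H]}$ cannot supply this. Induction takes an $N$-Mackey functor for the restricted transfer system as input. An object of $\Coeff^{\,N}_{\langle H\rangle^{\cO},\bQ}$ is only a functor on a few $N$-orbits, so the extension problem is presupposed for $N$ rather than solved.

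The passage to $N$ is in fact elementary and needs no double coset formula. Suppose $J,J'\in\Sub^{\cO}_{\langle H\rangle}$ and $J\le gJ'g^{-1}$. Then $J\le H\cap gHg^{-1}\le H$, so $H\cap gHg^{-1}\in[H]^{\cO}$ by Remark~\ref{rmk:relfamily}. But if $g\notin N_G(H)$, all the cosets $nH$ and $gnH$ with $n\in N_G(H)$ are fixed by $H\cap gHg^{-1}$, so $\abs{(G/H)^{H\cap gHg^{-1}}}\ge 2\abs{N_G(H)/H}>\abs{(G/H)^{H}}$, a contradiction. Hence the full subcategory of $\orb_G$ on the class orbits is already $\orb^{N}_{\langle H\rangle^{\cO}}$.

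\textbf{What is missing} is a generation statement, which you could get by a Morita argument:
\begin{enumerate}
\item If no subgroup of $K$ lies in $[H]^{\cO}$, then $e_{[H]}M(G/K)=0$.
\item If some $J\le K$ lies in $[H]^{\cO}$ with $J\le gHg^{-1}$, then $K\cap gHg^{-1}\in[H]^{\cO}$ and $K\cap gHg^{-1}\to K$ is admissible. Comparing marks, the restriction of $e_{[H]}$ to $K$ lies in the ideal generated by these classes $[K/(K\cap gHg^{-1})]$. By Frobenius reciprocity, $e_{[H]}M(G/K)$ is therefore spanned by transfers from $e_{[H]}M$ at class orbits.
\item So evaluation at the class orbits is conservative on $e_{[H]}\OMackey_\bQ^G$, and the representables $e_{[H]}\,\bQ\cA^{\cO}(G/J,-)$ for $J\in\Sub^{\cO}_{\langle H\rangle}$ form a projective generator of the block.
\item Their endomorphism algebra is the linearisation of $\orb^{N}_{\langle H\rangle^{\cO}}$, up to opposites. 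This uses the corrected Step 4 and the fact that restriction spans stay linearly independent after multiplying by $e_{[H]}$: the restriction of $e_{[H]}$ to a class member $J$ is $[J/J]$ plus a combination of $[J/K]$ with $K\to J$ admissible and $K\neq J$.
\end{enumerate}
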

As a choice of $(H)\in {\SubOG}/G$ determines an inseparability class, we refer to the rational coefficient systems $\Coeff_{\langle H\rangle^{\cO},\bQ}^{\,N}$ as \emph{rational coefficient systems on inseparability classes}.  When $G$ is abelian, these are all rational coefficient systems on subcategories of $\orb_G$.
\begin{corollary}\label{cor:abeliansplittingIncompleteMackey}
     Let \(\cO\) be a disk-like transfer system for an abelian group \(G\). There is an equivalence of categories
     \[
        \OMackey_\bQ^G \cong \prod_{H\in {\SubOG}} \Coeff_{\langle H \rangle^{\cO},\bQ}^{\,G}.
    \]
\end{corollary}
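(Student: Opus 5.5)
This is a specialization of Theorem \ref{thm:splittingIncompleteMackey} to the case where \(G\) is abelian. The proof amounts to simplifying both the indexing set and the factors of the product there.

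First, the indexing set. Since \(G\) is abelian, conjugation is trivial, so every conjugacy class \((H)\) is the singleton \(\{H\}\). The quotient poset \(\SubOG/G\) is therefore canonically identified with \(\SubOG\), and the product over \((H)\in \SubOG/G\) becomes a product over \(H\in\SubOG\).

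Second, the group indexing each factor. Every subgroup of an abelian group is normal, so \(N_G(H)=G\) for each \(H\). Hence \(\Coeff_{\langle H\rangle^{\cO},\bQ}^{\,N_G(H)}=\Coeff_{\langle H\rangle^{\cO},\bQ}^{\,G}\). As noted after Definition \ref{defn:inseparableposets}, the condition \(J\subseteq H\) in the definition of \(\orb^N_{\langle H\rangle^{\cO}}\) is automatic in the abelian case. This is because every element of \([H]^{\cO}\) is contained in \(H\), by the consequence of \cite[Corollary 3.24]{paper1} recalled above. So the subcategory \(\orb^G_{\langle H\rangle^{\cO}}\) is the full subcategory of \(\orb_G\) on the orbits \(G/J\) with \(J\in[H]^{\cO}\).

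Substituting these two identifications into the equivalence of Theorem \ref{thm:splittingIncompleteMackey} gives the stated equivalence. The only point needing any care is that passing from the quotient indexing set to \(\SubOG\) neither duplicates nor loses factors; this holds because the conjugacy classes are singletons. I expect no genuine obstacle.
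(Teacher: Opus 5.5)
Your proposal is correct and follows the same route as the paper. The paper treats this corollary as an immediate specialization of Theorem \ref{thm:splittingIncompleteMackey}: since $G$ is abelian, conjugacy classes are singletons, so $\SubOG/G=\SubOG$; moreover $N_G(H)=G$, and the condition $J\subseteq H$ holds automatically.
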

    
\section{An upper bound for the category of rational coefficient systems}\label{sec:upperboundcoeff}

In this section we give an upper bound on the global dimension of the category of rational \(\cO\)-Mackey functors 
when \(\cO\) is disk-like; see Proposition \ref{prop:uppedbound}. 
By Theorem \ref{thm:splittingIncompleteMackey} we can restrict to the case of coefficient systems 
on inseparability classes for \(\cO\). To obtain an upper bound on the global dimension of the category of coefficient systems
on an inseparability class, we connect projective resolutions to the poset structure of that inseparability class. 
We begin with the poset that we will use for the upper bound. 

Recall that a full subcategory is said to be \emph{replete} if whenever an object 
\(x\) is in the subcategory, so are all objects isomorphic to \(x\). 

\begin{definition} \label{def:projectiontoposet}
There is a projection \(\phi \colon \orb_G \to {\Sub_G}/G\) given by \(G/H \mapsto (H)\). 
The image of a full, replete subcategory 
\(\mcS \subseteq \orb_G\) under this projection is a sub-poset of \({\Sub_G}/G\), which we denote \(\widetilde{\mcS}\).
\end{definition} 
In this paper we always require that the subcategory \(\mcS\) is full and replete, 
the latter implying that if \(G/H\) is in \(\mcS\), then
so is \(G/K\) for every \(K \leq G\) which is conjugate to \(H\).

To find the upper bound we will need a set of projective objects. We obtain these via adjunctions.
The \emph{evaluation at \(H\) functor}
\[
\ev_H \colon \Coeff_{\mcS,\bQ}^{\,G} \to \bQ[W_G H]\dashMod
\]
has a left adjoint \(\underline{L}_{H}\)
and a right adjoint \(\underline{R}_{H}\) given by the following formulas. We use \( (-)^{W_G H}\) to indicate fixed points (also known as invariants):  
\begin{equation}\label{eqn:adjointstoevaluation}
\begin{split}
\underline{L}_H A(G/K)&=\Q \cdot \mcS^{\op}(G/H,G/K) \otimes_{\Q[W_GH]}A\\
\underline{R}_H A(G/K)&=\hom(\mcS^{\op}(G/K,G/H),A)^{W_GH}.
\end{split}
\end{equation}
Here we use the natural identification of $W_GH$ with $\mcS^{\op}(G/H,G/H)=\orb_G^{\op}(G/H,G/H)$ defined by sending $x\in N_GH $ to the $G$-map $gH\mapsto gxH$. These formulas for $\underline{L}_H$ and $\underline{R}_H$ follow from standard Kan extension constructions.

\begin{lemma}\label{lem:LHs Extension by Zero}
    If \(K\) is not subconjugate to \(H\), then 
    \[
        (\underline{L}_H A)(G/K)=0.
    \]
\end{lemma}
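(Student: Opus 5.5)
The plan is to compute directly from the formula \eqref{eqn:adjointstoevaluation}:
\[
(\underline{L}_H A)(G/K)=\Q\cdot\mcS^{\op}(G/H,G/K)\otimes_{\Q[W_GH]}A .
\]
It therefore suffices to show that the hom-set \(\mcS^{\op}(G/H,G/K)\) is empty whenever \(K\) is not subconjugate to \(H\).

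First, unwind the opposite category. We have
\[
\mcS^{\op}(G/H,G/K)=\mcS(G/K,G/H)=\orb_G(G/K,G/H),
\]
where the second equality holds because \(\mcS\) is a full subcategory of \(\orb_G\).

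Next, recall the standard description of \(G\)-maps between orbits. A \(G\)-map \(f\colon G/K\to G/H\) is determined by the image \(f(eK)=gH\). Equivariance forces \(K\) to fix \(gH\), that is \(kgH=gH\) for all \(k\in K\). This is equivalent to \(g^{-1}Kg\subseteq H\), so \(K\) is subconjugate to \(H\). Hence if \(K\) is not subconjugate to \(H\), there are no \(G\)-maps \(G/K\to G/H\), and \(\mcS^{\op}(G/H,G/K)=\emptyset\).

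Finally, the free \(\Q\)-vector space on the empty set is \(0\), and \(0\otimes_{\Q[W_GH]}A=0\). This gives \((\underline{L}_H A)(G/K)=0\).

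There is no real obstacle here. The only point needing care is keeping the variance straight: the value at \(G/K\) involves maps \(G/K\to G/H\) in \(\orb_G\), not maps in the other direction. That is exactly why the vanishing condition concerns \(K\) being subconjugate to \(H\).
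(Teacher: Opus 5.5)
Your proposal is correct and follows the paper's proof: the paper likewise observes that \(\mcS^{\op}(G/H,G/K)=\mcS(G/K,G/H)=\emptyset\) when \(K\) is not subconjugate to \(H\), so the free \(\Q\)-vector space on this hom-set and hence the tensor product vanish. You additionally spell out the standard orbit-map argument (\(f(eK)=gH\) forces \(g^{-1}Kg\subseteq H\)), which the paper leaves implicit.
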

\begin{proof}
    If \(K\) is not subconjugate to \(H\), then
    \[
        \mcS^{op}(G/H,G/K)=\mcS(G/K,G/H)=\emptyset.  \qedhere
    \]
\end{proof}

As \(\ev_H\) is exact, its left adjoint \(\underline{L}_{H}\) preserves projectives and its right adjoint \(\underline{R}_{H}\)
preserves injectives. Moreover, 
\[
\ev_H \circ \underline{L}_{H} = \Id = \ev_H \circ \underline{R}_{H}.
\]

In the case of an abelian group \(G\), a coefficient system \(\mM\) can be thought of as a contravariant functor from the poset of subgroups of \(G\) to \(\bQ[G]\)-modules such that \(\mM(G/H)\), the value at \(H\), is \(H\)-fixed. 

\begin{remark}
Recall that the \emph{height} of a finite poset \(\mcP\) is the length of the longest 
chain of non-identity maps in \(\mcP\). That is, if the sequence of maps
\[
p_0 \to p_1 \to \cdots \to p_n
\]
is the longest possible chain in \(\mcP\), then \(\mcP\) has height \(n\).  
We also set here the convention that 
\(p \leqslant q\) is the statement that there is a map
\(p \to q\). Similarly, \(p < q\) is the statement that there is a map
\(p \to q\) and \(p \neq q\). 
\end{remark}

\begin{lemma}\label{lem:gdupperbound}
The global dimension of \(\Coeff_{\mcS,\bQ}^{\,G}\) is bounded above by the height of \(\widetilde{\mcS}\), the poset formed by projecting \(\mcS\) to \({\Sub_G}/G\).  
\end{lemma}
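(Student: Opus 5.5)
We plan to argue by induction on the height of \(\widetilde{\mcS}\), using the functors \(\underline{L}_H\) and \(\underline{R}_H\) of \eqref{eqn:adjointstoevaluation} to produce projectives. Rationality is what makes this work. Since \(W_GH\) is finite and we work over \(\bQ\), Maschke's theorem makes every \(\bQ[W_GH]\)-module projective. Hence every \(\underline{L}_H A\) is projective in \(\Coeff_{\mcS,\bQ}^{\,G}\), because \(\underline{L}_H\) is left adjoint to the exact functor \(\ev_H\). Rather than bounding \(\ext\) directly, we will show that every object \(\mM\) admits a projective resolution of length at most the height of \(\widetilde{\mcS}\). More precisely, we will prove by induction on \(n\) the following claim: if \(\mM\) is supported on orbits \(G/K\) with \((K)\) lying in a down-closed subposet of \(\widetilde{\mcS}\) of height \(n\), then \(\mM\) has projective dimension at most \(n\). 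Since \(\mcS\) is finite, \(\widetilde{\mcS}\) is finite, so the height is finite.

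The first step is to build a projective cover-type map. Consider the set of maximal elements \((H)\) of the support of \(\mM\) in \(\widetilde{\mcS}\), and form
\[
P=\bigoplus_{(H)\ \text{maximal}} \underline{L}_H\,\mM(G/H),
\]
together with the map \(\varepsilon\colon P\to\mM\) adjoint to the identities on \(\mM(G/H)\). For maximal \((H)\) the map \(\ev_H(\varepsilon)\) is an isomorphism. Here we use Lemma \ref{lem:LHs Extension by Zero}: \(\underline{L}_{H'}A\) vanishes at \(G/H\) unless \(H\) is subconjugate to \(H'\), and distinct maximal classes are incomparable, so the only summand contributing at \(G/H\) is \(\underline{L}_H\mM(G/H)\). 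Moreover, \(P\) is supported below the support of \(\mM\), again by Lemma \ref{lem:LHs Extension by Zero}.

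The second step is to pass to the quotient \(\mM'=\mM/\operatorname{im}\varepsilon\) and the kernel \(\ker\varepsilon\). The quotient \(\mM'\) vanishes at all maximal \((H)\). It is not obvious that \(\varepsilon\) is surjective, so we will instead use a two-stage argument. Let \(\mM_{<}\) be the subfunctor of \(\mM\) obtained by killing the values at maximal classes. This is a subfunctor because restriction maps go from larger to smaller subgroups, so setting values at maximal classes to zero is compatible with the structure maps. The quotient \(\mM/\mM_{<}\) is concentrated at the maximal classes. We expect the main obstacle to be here: objects concentrated at a single class \((H)\) need not be projective. Their projective dimension has to be controlled using the short exact sequence
\[
0\to K\to \underline{L}_H A\to A_{(H)}\to 0,
\]
where \(K\) is supported strictly below \((H)\), in a down-closed poset of height at most \(n-1\). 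Induction then gives \(\operatorname{pd} K\le n-1\), and hence \(\operatorname{pd} A_{(H)}\le n\).

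Finally, we must treat \(\mM_{<}\), which is supported on a poset of height at most \(n-1\) if we take the maximal classes to be those of height exactly \(n\). Slightly more care is needed: we remove only the elements of top height, so that the remaining down-closed poset has height \(n-1\). Induction gives \(\operatorname{pd}\mM_{<}\le n-1\), and the extension \(0\to\mM_{<}\to\mM\to\mM/\mM_{<}\to 0\) gives \(\operatorname{pd}\mM\le\max(n-1,n)=n\). For the base case \(n=0\), the classes in the support are pairwise incomparable. Then \(\mM\cong\bigoplus \underline{L}_H\mM(G/H)\), since \(\underline{L}_H A\) is then concentrated at \((H)\), where it equals \(A\). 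This object is projective. Taking \(n\) to be the height of \(\widetilde{\mcS}\) gives the global dimension bound.
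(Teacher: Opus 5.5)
Your argument is correct. It reaches the bound by a somewhat different organisation than the paper. The paper does not restrict to maximal classes. It maps $\bigoplus_{(H)\in\widetilde{\mcS}}\underline{L}_H\mM(G/H)\to\mM$, summing over \emph{all} classes, so surjectivity is immediate: the summand indexed by $(K)$ already hits $\mM(G/K)$ isomorphically via the counit. This map is an isomorphism at the maximal elements of the support, so the kernel lives on a poset of strictly smaller height, and induction builds a projective resolution of length at most the height one syzygy at a time.

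You instead abandon $\varepsilon$ (your opening paragraph can simply be deleted) and argue by d\'evissage. You filter $\mM$ by the subfunctor $\mM_<$ that kills the top-height classes and reduce to objects concentrated at a single class. You bound those via the counit sequence $0\to K\to\underline{L}_HA\to A_{(H)}\to 0$, and reassemble using $0\to\mM_<\to\mM\to\mM/\mM_<\to 0$.

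The paper's route is shorter: one projective per step and no extension argument. Yours costs an extra step but keeps the bookkeeping precise. The kernel of the paper's map can be nonzero at classes below the support of $\mM$ that are not themselves in the support; for example, take $\mM$ concentrated at $G/G$ for $G=C_p$. So the paper's induction really runs over the height of the down-closure of the support, which is exactly your hypothesis of a down-closed subposet of $\widetilde{\mcS}$. Your version also isolates a useful fact: an object concentrated at $(H)$ has projective dimension at most the height of $\{(K)\le (H)\}$. This mirrors the Ext-between-simples viewpoint the paper adopts later for incidence algebras.
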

\begin{proof}
    If the poset  \(\widetilde{\mcS}\) has no maps, the global dimension is 0. 
    
    By Maschke's Theorem, all objects in \(\bQ[W_GH]\dashMod\) are projective.  As \(\underline{L}_H\) preserves projectives, examining the form of the left adjoints $\underline{L}_H$ shows that the map 
    \begin{equation}\label{eqn:bigprojection}
    \bigoplus_{(H)\in \widetilde{\mcS}} \underline{L}_{H} (\mM(G/H)) \to \mM
    \end{equation}
    is a surjection from a projective object.
    By Lemma \ref{lem:LHs Extension by Zero}, the value of $\underline{L}_H(\mM(G/H))$ at $G/K$ is zero when $K$ is not subconjugate to $H$.  Suppose $(K)$ is a maximal element of the support of $\mM$ in $\widetilde{\mcS}$. Then the  map \eqref{eqn:bigprojection} is an isomorphism at level $G/K$.   Hence the kernel of the map \eqref{eqn:bigprojection} has support concentrated in the support of $\mM$ with the maximal elements of the support of $\mM$ removed. The result then follows by induction over the height of the support of $\mM$. This is bounded above by the height of $\widetilde{\mcS}$. 
\end{proof}

We can now give the upper bound on the global dimension of categories of rational incomplete Mackey functors. 

\begin{proposition}\label{prop:uppedbound}
    The global dimension of the category of rational \(\cO\)-Mackey functors for a disk-like transfer system \(\cO\) for \(G\) is bounded above by the maximum height of the posets \(\Sub_{\langle H \rangle}^{\cO}/N_G(H)\), where the maximum runs over conjugacy classes of subgroups \(H\in {\SubOG}\).
\end{proposition}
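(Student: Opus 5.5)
The plan is to combine the splitting of Theorem \ref{thm:splittingIncompleteMackey} with the bound of Lemma \ref{lem:gdupperbound}. First I would record the general fact that for a finite product of abelian categories \(\mcA \simeq \prod_i \mcA_i\), each with enough projectives, the global dimension is the maximum of the global dimensions of the factors. Projectives in the product are exactly tuples of projectives, and \(\ext^n_{\mcA}((M_i),(N_i)) \cong \prod_i \ext^n_{\mcA_i}(M_i,N_i)\). Applying Theorem \ref{thm:splittingIncompleteMackey}, it then suffices to show that for each \(H \in \SubOG\) the category \(\Coeff_{\langle H\rangle^{\cO},\bQ}^{\,N}\), with \(N=N_G(H)\), has global dimension at most the height of \(\Sub_{\langle H\rangle}^{\cO}/N\).

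For a fixed \(H\), I would apply Lemma \ref{lem:gdupperbound} with the group \(N\) in place of \(G\) and \(\mcS = \orb^N_{\langle H\rangle^\cO}\). This subcategory is full by definition. For repleteness, I need the set \(\Sub^\cO_{\langle H\rangle}\) to be closed under \(N\)-conjugation. This holds because inseparability is defined through the cardinalities \(\abs{(G/L)^J}\), which are conjugation invariant, so \([H]^\cO\) is closed under \(G\)-conjugation. Moreover, if \(n \in N\) and \(J\subseteq H\), then \(nJn^{-1}\subseteq nHn^{-1}=H\). The lemma then bounds the global dimension by the height of \(\widetilde{\mcS}\), the image of \(\mcS\) in \(\Sub_N/N\).

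The remaining step is to identify \(\widetilde{\mcS}\) with \(\Sub_{\langle H\rangle}^{\cO}/N\) as posets. The objects agree: both are the \(N\)-conjugacy classes of elements of \(\Sub^\cO_{\langle H\rangle}\). In both posets the order is \((J)\le (K)\) exactly when some \(N\)-conjugate of \(J\) lies in \(K\), since \(\widetilde{\mcS}\) is a sub-poset of \(\Sub_N/N\). So the two posets coincide and their heights agree. Taking the maximum over the conjugacy classes \((H)\in \SubOG/G\) gives the stated bound.

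I expect the main care to be needed in the bookkeeping of this identification, specifically that conjugation is by \(N_G(H)\) rather than by \(G\) in both the lemma and the statement. I also need the ext characterisation of global dimension to pass through the equivalence of categories. That is standard, since equivalences preserve projectives and \(\ext\).
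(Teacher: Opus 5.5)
Your proposal is correct and follows essentially the same route as the paper. Both arguments use the splitting of Theorem \ref{thm:splittingIncompleteMackey}, take the maximum of the global dimensions of the factors, and apply Lemma \ref{lem:gdupperbound} with \(N_G(H)\) in place of \(G\) to \(\orb^{N_G(H)}_{\langle H\rangle^{\cO}}\), whose projection is \(\Sub_{\langle H\rangle}^{\cO}/N_G(H)\). Your checks of repleteness (via conjugation invariance of \(\abs{(G/L)^J}\)) and of the poset identification spell out details the paper leaves implicit.
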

\begin{proof}
    Using the splitting result in
    Theorem \ref{thm:splittingIncompleteMackey} for the category of rational \(\cO\)-Mackey functors for a disk-like transfer system \(\cO\), we get 
    \[
        \OMackey_\bQ^G \cong \prod_{(H)\in {\SubOG}/G} \Coeff_{\langle H \rangle^{\cO},\bQ}^{\,N_G(H)}.
    \]
    Thus the global dimension is the maximum of global dimensions of categories on the right hand side. Lemma \ref{lem:gdupperbound} gives an upper bound of this dimension in terms of the heights of the posets \(\Sub_{\langle H \rangle}^{\cO}/N_G(H)\),  as these are the projections of the 
    categories \(\orb^{N_G(H)}_{\langle H\rangle^{\cO}}\) to \(\Sub_{N_G(H)}/N_G(H)\) from Definition 
    \ref{defn:inseparableposets}.
\end{proof}

\begin{remark}\label{rmk:completecasebound}
    The complete transfer system for \(G\) is generated by the set of all maps \(H \to G\) 
    and hence it is disk-like. The splitting result in this case is that of 
    Greenlees and May \cite{gremay95} and Th\'evanez and Webb \cite{tw90},
    \[
        \Mackey_\bQ^G \cong \prod_{(H)\in {\Sub_G}/G} \bQ [W_GH] \dashMod.
    \]
    Here we see that the global dimension is zero, which agrees with the upper bound 
    because in this case, each poset is the singleton set.
\end{remark}

\begin{remark}
In general, the upper bound of Proposition \ref{prop:uppedbound} is rather poor. In the case of the trivial transfer system
and \(G=C_{p^n}\), the upper bound is \(n\) while the global dimension is 1, by Example \ref{ex:cyclicpower}. 
However, see Example \ref{ex:extcpq} for a group where this upper bound is the global dimension for the case of the trivial transfer system.
\end{remark}

\section{Adjunctions and global dimension}\label{sec:adjunctions}
Since we are discussing global dimension, throughout this section we assume that all categories are abelian categories and all functors considered are additive. 

\begin{lemma}\label{lem:adjunction_and_global_dim}
    Let \(F \colon \cC \to \mcD\) be a fully faithful exact functor between categories with enough projectives. 
    If \(F\) has an exact right adjoint, then for any $C,C'\in\cC$
    \[\ext_{\mcD}^*(FC,FC')=\ext_{\cC}^*(C,C').\]
    Thus the global dimension of \(\mcD\) is at least that of \(\cC\).
\end{lemma}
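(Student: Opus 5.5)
Let \(R\colon \mcD \to \cC\) denote the exact right adjoint of \(F\). The strategy is to transport a projective resolution in \(\cC\) across \(F\), using exactness of \(R\) to see that \(F\) preserves projectives. Full faithfulness then lets us compare the two \(\hom\)-complexes directly.

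The first step is to show that \(F\) preserves projectives. For \(P\) projective in \(\cC\) we have a natural isomorphism
\[
\hom_{\mcD}(FP,-)\cong \hom_{\cC}(P,R(-)).
\]
The right-hand side is a composite of the exact functor \(R\) with the exact functor \(\hom_{\cC}(P,-)\). Hence \(\hom_{\mcD}(FP,-)\) is exact, and \(FP\) is projective in \(\mcD\).

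Next, I compute \(\ext\) using resolutions.
\begin{itemize}
\item Take a projective resolution \(P_\bullet \to C\) in \(\cC\), which exists since \(\cC\) has enough projectives.
\item Because \(F\) is exact, \(FP_\bullet \to FC\) is still a resolution, and by the first step it is a projective resolution in \(\mcD\).
\item Therefore \(\ext^*_{\mcD}(FC,FC')\) is the cohomology of \(\hom_{\mcD}(FP_\bullet,FC')\).
\item Since \(F\) is fully faithful, this complex is isomorphic to \(\hom_{\cC}(P_\bullet,C')\), and the isomorphism is compatible with the differentials by functoriality of \(F\).
\end{itemize}
Taking cohomology gives \(\ext^*_{\mcD}(FC,FC')\cong \ext^*_{\cC}(C,C')\). (Only exactness of \(F\) is needed for this part, not existence of projectives in \(\mcD\); but \(\mcD\) having enough projectives makes the definition of \(\ext_{\mcD}\) via resolutions the standard one.)

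For the global dimension claim, suppose \(\ext^n_{\cC}(C,C')\neq 0\) for some \(C,C'\). Then \(\ext^n_{\mcD}(FC,FC')\neq 0\) by the isomorphism above. It follows that the largest \(n\) with \(\ext^n_{\mcD}\neq 0\) is at least the corresponding number for \(\cC\), and this includes the case where that number is infinite.

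The only step with any content is showing that \(F\) preserves projectives, which is exactly where exactness of the right adjoint is used. Everything else is formal.
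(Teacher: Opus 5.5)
Your proposal is correct and follows the paper's proof. Both arguments show that $F$ preserves projectives using the exact right adjoint, push a projective resolution of $C$ forward along the exact functor $F$, and use full faithfulness to identify $\hom_{\mcD}(FP_\bullet,FC')$ with $\hom_{\cC}(P_\bullet,C')$. Your justification via $\hom_{\mcD}(FP,-)\cong\hom_{\cC}(P,R(-))$ is just a more explicit version of the paper's remark that the right adjoint preserves surjections.
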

\begin{proof}
    Take a projective resolution \(P_\bullet\) of \(C \in \cC\). 
    As the right adjoint of \(F\) preserves surjections, \(F\) preserves projective objects. The exactness of \(F\) then implies that \(F\) preserves projective resolutions. Hence, \(FP_\bullet\) is a a projective resolution of \(FC \in \mcD\). 
    As \(F\) is fully faithful, for any \(C' \in \cC\) 
    \[
    \begin{array}{rcl}
    \ext_\mcD^*(FC, FC') 
    &=& H^* \hom_{\mcD} (FP_\bullet, FC') \\
    &=& H^* \hom_{\cC} (P_\bullet, C') \\
    &=&     \ext_\cC^*(C, C'). 
    \end{array}
    \]
    Hence, if \(\ext_\cC^n\) is non-zero, so is   \(\ext_\mcD^n\).  
\end{proof}

The analogous statement with injectives and an exact left adjoint holds by the same reasoning. The conditions of the lemma hold in the somewhat common case of the quintessential localisations of Johnstone \cite{Johnstone96}. 

\begin{definition}
A fully faithful functor \(F\colon \cC \to \mcD\) establishes \(\cC\) as a \emph{quintessential subcategory} of \(\mcD\) if \(F\) has an ambidextrous adjoint.
The adjoint functor to \(F\) is called a \emph{quintessential localisation} and we often  write this as saying \(F\) is \emph{part of a quintessential localisation}.
\end{definition}

\begin{corollary}\label{cor:quintlocal_gldim}
    If \(\cC\) is a quintessential subcategory of \(\mcD\) and \(\mcD\) has enough projectives, then \(\cC\) has enough projectives and 
    the global dimension of  \(\mcD\)
    is at least that of \(\cC\). 
\end{corollary}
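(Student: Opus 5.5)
Let \(F\colon \cC\to\mcD\) be the fully faithful inclusion and \(Q\) its ambidextrous adjoint, so \(Q\dashv F\dashv Q\). The plan is to check the hypotheses of Lemma \ref{lem:adjunction_and_global_dim} and then conclude from it.

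First, exactness. Since \(Q\) is both a left and a right adjoint, it preserves all colimits and all limits that exist. In particular, in these abelian categories \(Q\) preserves kernels and cokernels, so \(Q\) is exact. The same argument applies to \(F\), which is also both a left and a right adjoint. Hence \(F\) is exact and has an exact right adjoint \(Q\).

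Second, \(\cC\) has enough projectives. Because \(F\) is fully faithful, the unit of \(Q\dashv F\) (equivalently, the counit of \(F\dashv Q\)) is an isomorphism, so \(QF\cong \Id_\cC\). The left adjoint \(Q\) of the exact functor \(F\) preserves projectives: for projective \(P\in\mcD\) we have \(\hom_\cC(QP,-)\cong\hom_\mcD(P,F-)\), which is a composite of exact functors. Now take \(C\in\cC\) and choose a surjection \(P\to FC\) with \(P\) projective in \(\mcD\). Applying the right exact functor \(Q\) gives a surjection \(QP\to QFC\cong C\), and \(QP\) is projective in \(\cC\).

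Third, apply Lemma \ref{lem:adjunction_and_global_dim} to get \(\ext^*_\mcD(FC,FC')\cong\ext^*_\cC(C,C')\). So any nonzero \(\ext^n_\cC\) yields a nonzero \(\ext^n_\mcD\), and therefore \(\gldim\mcD\ge\gldim\cC\).

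There is no real obstacle here. The only point needing care is to use the correct direction of adjunction at each step: \(Q\) as a left adjoint to produce projectives in \(\cC\), and \(Q\) as a right adjoint to supply the hypothesis of the lemma.
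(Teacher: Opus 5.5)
Your proposal is correct and follows the paper's proof. The paper also uses that the ambidextrous adjoint is a left adjoint of the exact inclusion, so it preserves projectives and epimorphisms; then $QP\to QFC\cong C$ gives enough projectives, and Lemma~\ref{lem:adjunction_and_global_dim} finishes the argument. One small slip: the isomorphism $QF\cong \Id_\cC$ comes from the \emph{counit} of $Q\dashv F$ (equivalently the \emph{unit} of $F\dashv Q$), not the other way round as you wrote, though your conclusion is unaffected.
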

\begin{proof}
    The functor \(i\colon \cC \to \mcD\) establishing \(\cC\) as a quintessential subcategory of \(\mcD\) is fully faithful and exact with an exact right adjoint \(r\). By assumption, for every \(c\in \cC\), there is a projective \(P\in \mcD\) and an epimorphism \(P \to i(c)\) in \(\mcD\). Since \(r\) preserves projective objects and epimorphisms, \(rP \to ri(c)\cong c\) shows that there are enough projectives in \(\cC\). The second statement follows from Lemma \ref{lem:adjunction_and_global_dim}.  
\end{proof}

\begin{example}
    For \(\cC\) and \(\mcD\) abelian categories, \(\cC\) is a quintessential subcategory of \(\cC\times \mcD\), using the functor \(i\colon \cC \longrightarrow \cC \times \mcD\) given by \(C\mapsto (C,0)\). This implies that \(\gldim \cC \leq \gldim (\cC\times\mcD)\).
\end{example}

\begin{example}
    For a finite group \(G\), the inflation functor from \(\bQ\)-modules to 
    \(\bQ[G]\)-modules giving a \(\bQ\)-module the trivial \(G\)-action
    is part of a quintessential localisation. The adjoints are taking
    orbits and fixed points, which are naturally isomorphic over the rationals.
\end{example}

We want to extend this example to coefficient systems. 
Recall from Definition \ref{def:subg} the poset \({\Sub_G}/G\) and 
recall from Definition \ref{def:projectiontoposet}
the projection functor \(\phi \colon \orb_G \to {\Sub_G}/G\)
which sends \(G/H\) to the conjugacy class \((H)\) of \(H\). 

\begin{definition}
    Let \(\mcS\) be a full, replete subcategory of \(\orb_G\), with corresponding poset
    \(\widetilde{\mcS} \subseteq  {\Sub_G}/G\).
    We refer to the category of contravariant functors from \(\widetilde{\mcS}\)  to \(\bQ \dashMod\) as
    \emph{presheaves over \(\widetilde{\mcS}\)} and denote it by \(\PreSh_\bQ (\widetilde{\mcS})\).

    A presheaf \(F\) over \(\widetilde{\mcS}\) defines an \(\mcS\)-coefficient system by precomposition with the projection functor \(\phi\). We denote this functor 
    \[\underline{\phi}^* \colon \PreSh_\bQ (\widetilde{\mcS}) 
    \longrightarrow 
    \Coeff_{\mcS,\bQ}^{\,G}.
    \]
\end{definition}

Given a presheaf \(F\), the value on objects is 
\[
(\underline{\phi}^* F) (G/H) = F((H))
\]
Given any map \(f \colon G/K \to G/H\) in \(\mcS\), the functor \(\phi\) sends this to the \emph{unique} map 
\(\phi(f) \colon (K) \to (H)\) in  \(\widetilde{\mcS}\). 
Hence
\[
(\underline{\phi}^* F )(f) = F(\phi(f)) \colon F((H)) \longrightarrow F((K)).
\]
In particular, the maps \(G/H \to G/H\) which give the conjugation maps at \(G/H\) are all sent to the identity so that \((\underline{\phi}^* F )(G/H)\) has the trivial action of \(N_G H\). 
It follows that any \(\underline{M}\) in the essential image of \(\underline{\phi}^*\)
has trivial \(N_G H\) action on \(\underline{M}(G/H)\) and has all maps 
\(\underline{M}(G/H) \to \underline{M}(G/K)\) the same.
    
\begin{lemma}
    The functor \(\underline{\phi}^*\) has both left and right adjoints. 
\end{lemma}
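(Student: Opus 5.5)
The plan is to construct both adjoints explicitly as Kan extensions along the projection \(\phi\colon \mcS \to \widetilde{\mcS}\), and to use rationality to simplify the formulas. Since \(\Coeff_{\mcS,\bQ}^{\,G}\) and \(\PreSh_\bQ(\widetilde{\mcS})\) are functor categories from small categories into the complete and cocomplete category \(\bQ\dashMod\), the restriction functor \(\underline{\phi}^*\) automatically has a left adjoint \(\phi_!\) (left Kan extension) and a right adjoint \(\phi_*\) (right Kan extension), given pointwise by colimits and limits over comma categories. I would state this general fact and then compute the formulas, since they are what will be used later.

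To compute the formulas, fix \((H)\in\widetilde{\mcS}\). For the left adjoint, the comma category relevant to \(\phi_!\mM((H))\) consists of objects \(G/K\) of \(\mcS\) with a map \((H)\to(K)\) in \(\widetilde{\mcS}^{\op}\); that is, \(H\) is subconjugate to \(K\). This category has a weakly initial family given by the single object \(G/H\) with its automorphisms \(W_GH\). Any \(G/K\) with \(H\) subconjugate to \(K\) receives a map from \(G/H\) in \(\orb_G\), and any two such maps differ by an element of \(W_GH\), because maps \(G/H\to G/K\) correspond to \(gK\) with \(g^{-1}Hg\le K\). So I expect
\[
\phi_!\mM((H)) \cong \mM(G/H)_{W_GH}, \qquad \phi_*\mM((H)) \cong \mM(G/H)^{W_GH},
\]
with the restriction maps induced from those of \(\mM\). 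These are well defined because the induced maps on orbits, respectively fixed points, are independent of the chosen representative map, again since representatives differ by the Weyl group action.

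The main obstacle is verifying the adjunction directly rather than merely citing Kan extension theory, in particular checking that the coinvariant/invariant formula is really the colimit/limit over the comma category; the comma category is not literally a groupoid with a terminal object. The cleanest route is a direct check. A map \(\underline{\phi}^*F\to\mM\) is a family of \(N_GH\)-equivariant maps \(F((H))\to\mM(G/H)\), where the source has trivial action, compatible with restrictions. Such a map is the same as a family of maps into \(\mM(G/H)^{W_GH}\), and compatibility along all maps \(G/K\to G/H\) reduces to compatibility along the induced maps \((K)\to(H)\) on fixed points. The left adjoint is handled dually with coinvariants. Over \(\bQ\), coinvariants and invariants are naturally isomorphic via averaging, so in fact both adjoints agree. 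This also shows that \(\underline{\phi}^*\) is part of a quintessential localisation, which is presumably how the lemma will be used together with Corollary \ref{cor:quintlocal_gldim}.
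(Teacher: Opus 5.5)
Your first paragraph is a complete proof and is the same as the paper's: \(\underline{\phi}^*\) is precomposition with \(\phi\) between functor categories on small categories, valued in the complete and cocomplete category of \(\bQ\)-modules. Pointwise left and right Kan extensions therefore give both adjoints. Stop there. The rest of the proposal is false when \(G\) is not abelian.

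The failing step is the claim that any two maps \(G/H\to G/K\) differ by an element of \(W_GH\). Precomposition by \(W_GH\) on \(\orb_G(G/H,G/K)\cong (G/K)^H\) need not be transitive. Take \(G=A_4\), \(C=\langle(12)(34)\rangle\) and \(K\) the Klein four group. Then \(C\) acts trivially on \(G/K\), so there are three maps \(G/C\to G/K\). But \(W_GC=K/C\) fixes each of them, because \(n\alpha K=\alpha K\) for \(n\in K\). So \(G/C\) is not cofinal in the comma category, and the left Kan extension is not \(\mM(G/C)_{W_GC}\). Example~\ref{ex:exactnessfails} makes this explicit: \(\bL(\underline{L}_KV)=0\), although \((\underline{L}_KV)(G/C)_{W_GC}=V\neq 0\).

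Your invariant formula and your ``direct check'' fail for the dual reason. The induced map on \(W_GH\)-invariants depends on the chosen \(f\colon G/K\to G/H\) unless postcomposition by \(W_GH\) is transitive. For example, the six maps \(G/e\to G/C\) in \(A_4\) form three orbits under \(W_GC\). So your final claims, that the two adjoints agree and that \(\underline{\phi}^*\) is part of a quintessential localisation, are false in general; the paper points this out right after the lemma, since \(\bL\) is not exact.

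Your computation is valid exactly when \(G\) is abelian. There it is the content of Proposition~\ref{prop:quintlocalnoaction} and Lemma~\ref{lem:cofinalnightmare}.
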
    
\begin{proof}
    These adjoints are produced by left and right Kan extension. We may write these adjoints in their standard forms, as (co)ends or as (co)limits over slice categories
    \[
    \begin{array}{rcccl}
    \bL \mM ((H))  &=& \int^{G/K \in {\mcS}} \bQ \cdot \widetilde{\mcS}((H),(K))  \otimes \mM (G/K)  &=& \underset{(H) \to \phi(G/K) \in \mcS}{\colim} \mM(G/K) \\[9pt] 
    \bR \mM ((H))  &=& \int_{G/K \in {\mcS}} \hom\left( \widetilde{\mcS}((K),(H)),  \mM (G/K) \right) 
    &=& \underset{\phi(G/K) \to (H) \in \mcS}{\lim} \mM(G/K). \qedhere
    \end{array}
    \]    
\end{proof}

We would like this to be a quintessential localisation so that we can use earlier results of the section, 
but this is false in general. Indeed, the adjoints fail to be exact, 
so that we cannot use Lemma \ref{lem:adjunction_and_global_dim} either.

\begin{example}\label{ex:exactnessfails} 
We show the failure of exactness of \(\bL\). Let $G=A_4$, the alternating group on four letters, and let $K\subseteq A_4$ be the Klein four group. Let $C=\langle (12)(34)\rangle$ be one of the three subgroups of $K$ of order $2$ and let $\sigma C\sigma^{-1}$ and $\sigma^2 C\sigma^{-2}$ be its conjugates in $A_4$, where $\sigma=(123)$. Take \(\mcS\) to be the full subcategory on the orbits \(A_4/K\),  
    \(A_4/C\),
    \(A_4/\sigma^{-1} C\sigma\) and 
    \(A_4/\sigma^{-2} C\sigma^{2}\).

    The subgroup $K$ is normal in $A_4$ and its Weyl group is $W_G K\cong C_3$.  Let $V$ denote the $\Q[W_GK]$-module $\Q[W_GK]/\Q$, the reduced regular representation. We then obtain a coefficient system $M=\underline{L}_KV$ on $\mcS$ by applying the left adjoint $\underline{L}_K$ to evaluation at $K$ as defined in (\ref{eqn:adjointstoevaluation}).

We calculate the presheaf $\bL M$ using the adjunctions defining $\bL$ and $\underline{L}_K$.  For any presheaf $F$ on $\widetilde{\mcS}$, we have
\[
\PreSh_\bQ(\widetilde{\mcS})(\bL(\underline{L}_KV),F)\cong \Coeff_{\mcS,\Q}^{\,G}(\underline{L}_K V,\underline{\phi}^*F)\cong \hom_{\Q[W_GK]}\!\big(V,\ev_K\underline{\phi}^*F\big).
\]
By definition $\ev_K\underline{\phi}^*F=F((K))$ and, as discussed above, the action of $N_GK$ on $F((K))$ must be trivial.  Hence any $W_GK$-equivariant map from $V$ to $F((K))$ must factor through the coinvariants $(V)_{W_GK}$ of $V$, so we have 
\[\hom_{\Q[W_GK]}(V,F((K)))\cong \hom_\Q\!\big((V)_{W_GK},F((K))\big).\]
Since the coinvariants of \(V\) are zero, 
we find that $\PreSh_\Q(\widetilde{\mcS})(\bL M,F)\cong 0$ for all $F$. This implies that $\bL M$ is the constant presheaf at $0$.

Now consider the subobject $M'$ of $M$ that agrees with $M$ except at $G/K$, where $M'$ takes value $0$. The formula for $M=\underline{L}_KV$ from (\ref{eqn:adjointstoevaluation}) allows us to calculate $M'(G/C)=M(G/C)$ as
\[
M(G/C)=\Q[\mcS^\op(G/K,G/C)]\otimes_{\Q[W_GK]}V
\]
There are three maps in $\mcS$ from $G/C$ to $G/K$ which are permuted by the action of $W_GK$, and hence $M(G/C)=V$.  The normalizer of $C$ in $A_4$ is $K$ and so the Weyl group $W_GC$ is $K/C$.  Using the natural identification $\mcS^\op(G/C,G/C)\cong W_GC$, we see that this Weyl group acts on $M(G/C)$ via precomposition with the $\mcS^\op(G/K,G/C)$ term. Since $C$ is central in $K$ (indeed, $K$ is abelian),  the maps in $\mcS(G/C,G/C)$ commute with the canonical projection $G/C\to G/K$ and so the action of $W_GC$ on $M(G/C)=M'(G/C)$ is trivial. Similarly, $M'(G/\sigma^{-1} C\sigma)=V$ and $M'(G/\sigma^{-2} C\sigma^{2})=V$ with trivial Weyl actions.  

To calculate $\bL M'((C))$, we take the colimit of $M'$ over maps out of $(C)$.  Since $M'(G/K)=0$, we need only take the colimit over the full subcategory spanned by $G/C$, $G/\sigma^{-1}C\sigma$ and $G/\sigma^{-2}C\sigma^2$.  The endomorphisms of each of these act trivially on $M'$, so we see that 
\[
\bL M'((C))=V\neq 0.
\]
It follows that $\bL(M'\to M)$ is not a monomorphism and so $\bL$ is not exact.  A similar example may be constructed to show that $\bR$ is not exact.
\end{example}

\begin{proposition}\label{prop:quintlocalnoaction}
    For \(G\) an abelian group, the functor \(\underline{\phi}^*\) is a part of a quintessential localisation. 
\end{proposition}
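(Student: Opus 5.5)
For \(G\) abelian the plan is to compute the two Kan extensions \(\bL\) and \(\bR\) explicitly, show that both are given by the same exact formula, and then verify that \(\underline{\phi}^*\) is fully faithful. In the abelian case conjugation is trivial, so \(\widetilde{\mcS}\) is just the full sub-poset of \(\Sub_G\) on the subgroups \(H\) with \(G/H\in\mcS\). For \(K\le H\) the set \(\mcS(G/K,G/H)\) is a torsor for \(W_GH = G/H\), and also for the group \(G/K\) acting by precomposition. The endomorphisms \(\mcS(G/K,G/K)\cong G/K\) act transitively on it. Since \(\mcS\) is replete and all conjugates coincide, each object appears once up to identity.

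First compute \(\bL\mM((H))\) as the colimit over the slice category of objects \(G/K\) with \(K\ge H\). In this slice, the maps \(G/H\to G/K\) in \(\mcS\) are all identified up to the action of \(\aut(G/K)\). The colimit is therefore the colimit over the poset \(\{K\in\widetilde{\mcS} : K\ge H\}\) of the coinvariants \(\mM(G/K)_{G}\). Because \(\mM(G/K)\) is already \(K\)-fixed, these are the \(G/K\)-coinvariants. The subposet has initial object \(H\), so the colimit is just \(\mM(G/H)_{G}\). This gives
\[
\bL\mM((H))\cong \mM(G/H)_{W_GH}.
\]
Dually, the limit over the slice category under \((H)\) has terminal object \(H\), and it gives
\[
\bR\mM((H))\cong \mM(G/H)^{W_GH}.
\]
The restriction maps are induced from those of \(\mM\); they are well defined on (co)invariants because restriction \(\mM(G/H)\to\mM(G/K)\) is \(G\)-equivariant. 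This equivariance is exactly where commutativity is used, and it is what fails in Example \ref{ex:exactnessfails}. Over \(\bQ\) the norm map gives a natural isomorphism between coinvariants and invariants compatible with restrictions, so \(\bL\cong\bR\) and the adjoint is ambidextrous. Both functors are exact, because taking invariants of a finite group rationally is exact.

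The main obstacle is making the reduction of the slice colimit rigorous: one has to check that cofinality of the identity object \(G/H\) really survives once the endomorphisms are accounted for. To do this I would write out a morphism in the slice category as a commuting triangle \(G/H\to G/K\to G/K'\) and argue directly from the universal property that a cocone out of the slice is the same thing as a \(G\)-invariant map out of \(\mM(G/H)\), that is, a map out of the coinvariants.

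Finally, full faithfulness follows from the counit \(\bL\underline{\phi}^*F\to F\). At \((H)\) this counit is \(F((H))_{G}\to F((H))\), which is an isomorphism because the action on \(\underline{\phi}^*F(G/H)\) is trivial. Together these give that \(\underline{\phi}^*\) is part of a quintessential localisation.
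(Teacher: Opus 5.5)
Your proposal is correct and follows essentially the paper's route. You identify $\bL\mM((H))$ and $\bR\mM((H))$ with $\mM(G/H)_{G/H}$ and $\mM(G/H)^{G/H}$, using that any two maps $G/H\to G/K$ differ by an automorphism of $G/H$ (the paper packages your direct cocone argument as the cofinality Lemma~\ref{lem:cofinalnightmare}), and then use the rational isomorphism between invariants and coinvariants to get $\bL\cong\bR$. Your explicit check that the counit $\bL\underline{\phi}^*F\to F$ is an isomorphism, so that $\underline{\phi}^*$ is fully faithful, is a point the paper leaves implicit. One small correction: precomposition by $G/K$ on $\mcS(G/K,G/H)$ is transitive but not free (its stabiliser is $H/K$), so it is not a torsor, although only transitivity is used.
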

\begin{proof}
    In the case of an abelian group the adjoints take the following form, 
    using \( (-)_{G/H} \) to indicate \(G/H\)-orbits (coinvariants) and
    \( (-)^{G/H} \) to indicate \(G/H\)-fixed points (invariants)
    \[
    \begin{array}{rcccl}
    \bL \mM (H)  &=& \mM (G/H)_{G/H} \\[9pt] 
    \bR \mM (H)  &=& \mM (G/H)^{G/H}.
    \end{array}
    \]
    We may deduce this form for the left adjoint at \(H\) from the colimit formula   
    \[
    \bL \mM (H)  = \underset{\phi(G/K) \to H}{\colim} \mM(G/K)
    \]
    where the colimit is over the slice category \(\phi \downarrow H\) of maps of the form 
    \(\phi(G/K) \to H\) in \(\widetilde{\mcS}^{op}\), with \(G/K \in \mcS^{op}\).
    In Lemma \ref{lem:cofinalnightmare} we show that there is a cofinal inclusion 
    \(\iota \colon \phi^{-1} (H) \to \phi \downarrow H\), where \(\phi^{-1} (H)\)
    is the subcategory of \(\mcS^{op}\) with one object \(G/H\) and morphisms 
    the endomorphisms of \(G/H\). 
    The inclusion sends 
    the object \(G/H\) to the identity map \(\phi(G/H) = H\)
    and a morphism \(f \colon G/H \to G/H \in \mcS^{op}\) to the identity morphism
    \[
    \Id \colon (\phi(G/H) = H) \to (\phi(G/H) = H).
    \]
    Thus, \(\phi^{-1} (H)\) is the one-point category with morphisms given by the group \(G/H\)
    and the cofinal functor sends all of this category to the identity, so 
    \[
    \bL \mM (H)  = \underset{\phi^{-1} (H)}{\colim} \mM(G/H) = \mM(G/H)_{G/H}.
    \]
    Similar statements hold to show that the right adjoint has the claimed form. 
    
    Now, we prove that the two adjoints of \(\underline{\phi}^*\) are naturally isomorphic. 
    Since we are working rationally, for any \( \bQ[G]\)-module \(A\) the composite 
    \(  A^G \to A \to A_G \) is an isomorphism, which is natural in \(A\).
    This natural map extends to a morphism of presheaves
    \[
    \bR \mM \longrightarrow \bL \mM
    \]
    that is an objectwise isomorphism. 
\end{proof}
   
\begin{lemma}\label{lem:cofinalnightmare}
In the setting of Proposition \ref{prop:quintlocalnoaction}, there is a cofinal inclusion
\[
\iota \colon \phi^{-1} (H) \to \phi \downarrow H 
\]
for any \(H \in \widetilde{\mcS}^{op}\).
\end{lemma}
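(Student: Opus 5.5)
The plan is to unwind the definition of the slice category \(\phi\downarrow H\) using that \(G\) is abelian, and then to verify the standard criterion for cofinality (finality for colimits). That criterion says: for every object \(d\) of \(\phi\downarrow H\), the comma category \(d\downarrow\iota\) is nonempty and connected.

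First I will describe \(\phi\downarrow H\) concretely. Since \(G\) is abelian, conjugacy classes of subgroups are single subgroups, so \(\widetilde{\mcS}\) is just a sub-poset of \(\Sub_G\). An object of \(\phi\downarrow H\) is an orbit \(G/K\in\mcS\) together with a map \(\phi(G/K)=K\to H\) in \(\widetilde{\mcS}^{op}\), that is, \(H\subseteq K\). Because \(\widetilde{\mcS}\) is a poset, this map is unique if it exists. A morphism \((G/K)\to(G/K')\) is a morphism \(G/K\to G/K'\) in \(\mcS^{op}\), and the compatibility condition over \(H\) is automatic, again because we are in a poset. Hence \(\phi\downarrow H\) is the full subcategory of \(\mcS^{op}\) on the orbits \(G/K\) with \(H\subseteq K\). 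The functor \(\iota\) is the inclusion of the full subcategory on the single object \(G/H\), whose endomorphisms form the group \(G/H\) of translations.

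Next, fix an object \(G/K\) with \(H\subseteq K\). An object of \(G/K\downarrow\iota\) is a morphism \(G/K\to G/H\) in \(\mcS^{op}\), i.e.\ a \(G\)-map \(u\colon G/H\to G/K\). Since \(H\subseteq K\), the canonical projection \(gH\mapsto gK\) is such a map, so the comma category is nonempty. For connectedness I will use the explicit form of these maps: since \(G\) is abelian, every \(G\)-map \(G/H\to G/K\) has the form \(u_x\colon gH\mapsto gxK\) for some \(x\in G\). A morphism \(u_x\to u_{x'}\) in the comma category is an endomorphism of \(G/H\), namely a translation \(t_y\colon gH\mapsto gyH\), satisfying \(u_x\circ t_y=u_{x'}\), i.e.\ \(xy\equiv x' \pmod K\). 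Taking \(y=x^{-1}x'\) provides such a morphism. So any two objects of \(G/K\downarrow\iota\) are joined by a single morphism, and the comma category is connected.

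By the cofinality criterion, \(\iota\) is cofinal, which is what Proposition \ref{prop:quintlocalnoaction} needs. The only point requiring care, and the main potential obstacle, is getting the variances right. Maps in \(\widetilde{\mcS}^{op}\) and \(\mcS^{op}\) are reversed relative to inclusions and \(G\)-maps, and the comma category used for colimits must be \(d\downarrow\iota\) rather than \(\iota\downarrow d\). I will check this bookkeeping explicitly. The abelian hypothesis is used twice: to identify \((H)\) with \(H\), and to make every \(G\)-map \(G/H\to G/K\) a translate of the projection, which the transitive translation action of \(G/H\) then absorbs.
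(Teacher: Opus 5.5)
Your proof is correct and is essentially the paper's argument. Nonemptiness of each comma category comes from the canonical projection \(G/H \to G/K\), and connectedness comes from precomposing with the translation of \(G/H\) by \(x^{-1}x'\), which is exactly the paper's map \(gH \mapsto g\alpha_1^{-1}\alpha_2 H\); your variance bookkeeping (that the comma-category condition is \(u_x \circ t_y = u_{x'}\) in \(\mcS\)) checks out.
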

\begin{proof}
    Following Mac Lane \cite[Section IX.3]{MacLane}, we first show that given any object of \(\phi \downarrow H\) there is a map to an object in the image of \(\iota\).
    Take \(\phi(G/K) \to H\) in the slice category, then \(K \to H\) is a map in \(\widetilde{\mcS}^{op}\).  Thus \(H\) is a subgroup of \(K\), giving the canonical projection map \(G/K \to G/H\) in \(\mcS^{op}\).
    We thus have the desired map
    \[
    (\phi(G/K) \to H) \longrightarrow (\phi(G/H) \xrightarrow{=} H).
    \]         
    Secondly, we show that given any \(\phi(G/K) \to H\) in \(\phi \downarrow H \) and maps in the slice category
    \[
    (\phi(G/H) \xrightarrow{=} H) 
    \overset{f}{\longleftarrow} (\phi(G/K) \to H) 
    \overset{g}{\longrightarrow} (\phi(G/H) \xrightarrow{=} H)
    \]
    there is a map \((\phi(G/H) \xrightarrow{=} H) \overset{h}{\longrightarrow} (\phi(G/H) \xrightarrow{=} H) \)
    with \(h \circ f= g\).

    Translating the domain terms to \(\mcS\), we have maps \(a_1 \colon G/H \to G/K\), \(gH \mapsto g\alpha_1 K\)
    and \(a_2 \colon G/H \to G/K\), \(gH \mapsto g\alpha_2 K\). 
    Define a map \(c \colon G/H \to G/H\) by \(gH \mapsto g\alpha_1^{-1} \alpha_2 H\).
    This induces the desired \(h\), proving cofinality.        
\end{proof}

\begin{example}
This cofinality result fails in the non-abelian case. 
    Consider the zig-zag in the orbit category of \(A_4\)
 \[
    A_4/\langle (12)(34)\rangle \to A_4/K \leftarrow A_4/\langle (12)(34)\rangle.
    \]  
 where one of the maps is the canonical projection and the other is given by $g\langle (12)(34)\rangle \mapsto g(123)K$.   There is no map that completes this zig-zag to a triangle.  
 
 This is the issue that underlies the failure of exactness in  Example \ref{ex:exactnessfails}---the endomorphisms at the $A_4/K$ level are ``spread out'' across the three conjugate subgroups at the $A_4/\langle(12)(34)\rangle$ level and so the colimit doesn't kill the value at $\langle (12)(34)\rangle$.
\end{example}

In light of Corollary \ref{cor:quintlocal_gldim}, Proposition \ref{prop:quintlocalnoaction} has the following implication.
\begin{corollary}\label{cor:lower_bound_coeff}
    For \(G\) an abelian group, the global dimension of \(\Coeff_{\mcS,\bQ}^{\,G}\) is bounded below by the global dimension of \(\PreSh_\bQ (\widetilde{\mcS})\).  
\end{corollary}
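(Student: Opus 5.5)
The plan is to deduce Corollary \ref{cor:lower_bound_coeff} directly from Corollary \ref{cor:quintlocal_gldim}, taking \(\cC = \PreSh_\bQ(\widetilde{\mcS})\), \(\mcD = \Coeff_{\mcS,\bQ}^{\,G}\) and \(F = \underline{\phi}^*\). This needs three inputs: that \(\underline{\phi}^*\) is fully faithful, that it has an ambidextrous adjoint, and that \(\Coeff_{\mcS,\bQ}^{\,G}\) has enough projectives.

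Ambidexterity is Proposition \ref{prop:quintlocalnoaction}. There, for \(G\) abelian, the left adjoint \(\bL\) and the right adjoint \(\bR\) are naturally isomorphic via the rational norm isomorphism \(A^{G/H}\to A\to A_{G/H}\).

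Enough projectives in \(\Coeff_{\mcS,\bQ}^{\,G}\) comes from the proof of Lemma \ref{lem:gdupperbound}. The map \(\bigoplus_{(H)} \underline{L}_H(\mM(G/H)) \to \mM\) is an epimorphism from a projective. Each \(\underline{L}_H(\mM(G/H))\) is projective because \(\underline{L}_H\) is left adjoint to the exact functor \(\ev_H\), and every \(\bQ[W_GH]\)-module is projective by Maschke's theorem.

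Full faithfulness of \(\underline{\phi}^*\) is the one point that still needs an argument. Since \(\phi\colon \mcS \to \widetilde{\mcS}\) is surjective on objects, a natural transformation \(\underline{\phi}^*F \to \underline{\phi}^*F'\) has components \(F((H)) \to F'((H))\) indexed by objects \(G/H\). Conjugate subgroups coincide when \(G\) is abelian, so there is one component for each \((H)\). The morphisms of \(\widetilde{\mcS}\) are images of morphisms of \(\mcS\): whenever \(K\le H\) there is the projection \(G/K\to G/H\), since \(\mcS\) is full. So naturality with respect to \(\mcS\) is exactly naturality with respect to \(\widetilde{\mcS}\), and \(\underline{\phi}^*\) is fully faithful.

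An alternative check of full faithfulness is that the counit \(\bL\underline{\phi}^*F \to F\) is an isomorphism. This holds because \((\underline{\phi}^*F)(G/H)\) carries the trivial action, so taking \(G/H\)-coinvariants does nothing.

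With these three inputs, \(\PreSh_\bQ(\widetilde{\mcS})\) is a quintessential subcategory of \(\Coeff_{\mcS,\bQ}^{\,G}\), and Corollary \ref{cor:quintlocal_gldim} gives \(\gldim \PreSh_\bQ(\widetilde{\mcS}) \le \gldim \Coeff_{\mcS,\bQ}^{\,G}\). Exactness of \(\underline{\phi}^*\), which Lemma \ref{lem:adjunction_and_global_dim} uses, is automatic: it is precomposition, so exactness is checked objectwise. Exactness of its adjoints also follows from having both adjoints, or directly from the formulas, since rational coinvariants are exact.

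The only real obstacle is already handled by Proposition \ref{prop:quintlocalnoaction}, so I expect this corollary to be a short formal deduction.
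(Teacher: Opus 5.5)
Your proposal is correct and follows the paper's route: the corollary is obtained by feeding Proposition \ref{prop:quintlocalnoaction} into Corollary \ref{cor:quintlocal_gldim}. Your additional checks (full faithfulness of \(\underline{\phi}^*\) via the counit or via lifting poset maps to \(\mcS\), and enough projectives in \(\Coeff_{\mcS,\bQ}^{\,G}\) via the \(\underline{L}_H\)) make explicit hypotheses that the paper leaves implicit.
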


Thus we are interested in the global dimension of the category \(\PreSh_\bQ (\widetilde{\mcS})\) in the case of abelian groups.

\section{Incidence algebras}\label{sec:incalg}
In our search for a lower bound for rational \(\cO\)-Mackey functors, we have reduced the question to the 
global dimension of categories of rational presheaves over posets. Surprisingly, this question has been studied
extensively in the guise of global dimension of incidence algebras of Rota; see \cite{rota64}. 

\begin{definition}
    For a finite poset \(\mcP\), define \(\bQ^\mcP\) to be the rational vector space with basis given by the vertices of \(\mcP\)
    \[
    \{b_x \mid x \in \mcP\}.
    \]
    The rational \emph{incidence algebra} \(IA(\mcP)\) is the subalgebra of the matrix algebra \(\textnormal{End}(\bQ^\mcP,\bQ^\mcP)\) consisting of those endomorphisms \(f\) such that for each \(x \in \mcP\)
    \[
    f(b_x)  = \sum_{y \leq x} \alpha_y b_y.
    \]
    for some rational coefficients \(\alpha_y\), which depend upon \(x\). 
\end{definition}

For each vertex \(x \in \mcP\) there is an idempotent \(e_x\) which is the projection onto coordinate \(x\).
Any left \(IA(\mcP)\) module \(M\) provides the data 
of a vector space \(e_x M\) for each vertex \(x\) and linear maps
\[
f_{yx} \colon e_x M \to e_y M 
\]
whenever \(y < x\), such that \(f_{zy} f_{yx} = f_{zx}\) 
whenever \(z < y < x\). In fact, this information is precisely the 
data of rational presheaf on \(\mcP\), as we record in the following.

\begin{lemma}[{\cite[Section 1]{IZ90}}]\label{lem:presheafincidence}
    If \(\mcP\) is a finite poset, then there is an equivalence
    of categories between left modules over \(IA(\mcP)\) and 
    rational presheaves over \(\mcP\). 
    \[
    IA(\mcP)\dashMod \cong \PreSh_\bQ (\mcP).
    \]
\end{lemma}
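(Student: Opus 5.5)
We will construct mutually inverse functors explicitly using the idempotents \(e_x\), \(x\in\mcP\), together with the elementary matrices of the incidence algebra. For \(y\le x\) let \(e_{yx}\in IA(\mcP)\) be the endomorphism sending \(b_x\mapsto b_y\) and killing all other basis vectors. By the defining condition, these lie in \(IA(\mcP)\) and form a \(\bQ\)-basis of it, with \(e_{xx}=e_x\). Matrix multiplication gives \(e_{zy}e_{yx}=e_{zx}\) whenever \(z\le y\le x\), and \(e_{wv}e_{yx}=0\) when \(v\ne y\). The \(e_x\) are orthogonal idempotents summing to \(1\).

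Given a left module \(M\), we set \(F_M(x)=e_xM\). For \(y<x\) the restriction map \(F_M(x)\to F_M(y)\) is left multiplication by \(e_{yx}\); its image lies in \(e_yM\) because \(e_ye_{yx}=e_{yx}\). The relation \(e_{zy}e_{yx}=e_{zx}\) gives functoriality, and \(e_{xx}=e_x\) acts as the identity on \(e_xM\). Hence \(F_M\) is a contravariant functor \(\mcP^{\op}\to\bQ\dashMod\). Module maps commute with the \(e_{yx}\), so they induce natural transformations.

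Conversely, given a presheaf \(F\), put \(M_F=\bigoplus_{x}F(x)\). Let \(e_{yx}\) act by the restriction \(F(x)\to F(y)\) on the summand \(F(x)\) and by zero on all other summands, and extend linearly. One checks the module axioms on basis elements, which reduces to the multiplication table above together with the functoriality of \(F\). Natural transformations give module maps summandwise.

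Finally we compare the two composites. Clearly \(F_{M_F}\cong F\). In the other direction, \(M\cong\bigoplus_x e_xM=M_{F_M}\) because the \(e_x\) are complete orthogonal idempotents, and this isomorphism respects the action of each \(e_{yx}\). Both isomorphisms are natural, which completes the argument.

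The only step needing care is the verification that the \(e_{yx}\) span \(IA(\mcP)\) with the stated multiplication, since this pins down the variance of the presheaf. It follows directly from the defining condition \(f(b_x)\in\operatorname{span}\{b_y: y\le x\}\).
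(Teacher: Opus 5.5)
Your proposal is correct and follows essentially the route the paper indicates in the discussion preceding the lemma (the lemma itself is cited from Igusa--Zacharia). That discussion decomposes a module as \(\bigoplus_x e_xM\) using the orthogonal idempotents \(e_x\), and uses the elements \(e_{yx}\) (the paper's \(f_{yx}\)) as the restriction maps \(e_xM\to e_yM\) for \(y<x\); you supply the details the paper leaves implicit, namely the matrix-unit basis and its multiplication table, the inverse construction \(M_F=\bigoplus_x F(x)\), and the check that both composites are naturally isomorphic to the identity.
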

For brevity, we will allow ourselves to use this equivalence
without notation and rely on context to make it clear when we 
have applied this equivalence to a presheaf or a module. 

\begin{remark}
    The algebra \(IA(\mcP)\) has another formulation via the Freyd–Mitchell Embedding Theorem. The category of rational presheaves on \(\mcP\) has a projective generator:
    \[
        P=\bigoplus_{p\in P}\Q\cdot \mcP(\mhyphen, p),
    \]
    the sum over all of the elements of \(\mcP\) of the free vector space on the representable presheaf. The category of rational presheaves is therefore equivalent to the category of right modules over the endomorphisms of \(P\). The Yoneda lemma shows that we have an isomorphism of algebras
    \[
        \operatorname{End}(P)^{\op}\cong IA(\mcP).
    \]
\end{remark}

\begin{example}
    Let \(\mcP\) be the poset \(y \to x\). 
    We may write 
    \[
    b_y = \begin{pmatrix} 1 \\ 0 \end{pmatrix}
    \quad
    b_x = \begin{pmatrix} 0 \\ 1 \end{pmatrix}  
    \]
    We may then 
    describe \(IA(\mcP)\) as two-by-two upper triangular matrices.
    Every such matrix is a linear combination of the matrices
    \[
    e_y = \begin{pmatrix} 1 & 0 \\ 0 & 0 \end{pmatrix} \quad
    e_x = \begin{pmatrix} 0 & 0 \\ 0 & 1 \end{pmatrix} \quad
    f_{yx} = \begin{pmatrix} 0 & 1 \\ 0 & 0 \end{pmatrix}.
    \]
    We see that 
    \[
    f_{yx} e_y = 0 =  e_x f_{yx}  \qquad
    f_{yx} e_x = f_{yx}  = e_y f_{yx} 
    \]
    so that if \(M\) is a left \(IA(\mcP)\)-algebra then
    \(f_{yx} e_x M\) is in the image of \(e_y\) and hence in 
    \(e_y M\). That is, we have two vector spaces 
    \(e_x M\) and \(e_y M\) and a linear map. 
    \[
    f_{yx} \colon e_x M \to e_y M. 
    \]
    This is precisely the data of a rational presheaf on this
    poset, including the contravariance.
\end{example}

There is a simple upper bound for the global dimension of the category of modules over an incidence algebra.
\begin{lemma}\label{rem:presheaves_upperBound}
The global dimension of $IA(\mcP)\dashMod\cong \PreSh_\bQ(\mcP)$ is bounded above by the height of the poset $\mcP$.
\end{lemma}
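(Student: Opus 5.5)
This follows the argument of Lemma \ref{lem:gdupperbound}, specialised to presheaves on a poset, where there is no group action. We induct on the height of the support of a presheaf. For a presheaf \(M\) on \(\mcP\), the \emph{support} is \(\{p\in\mcP \mid M(p)\neq 0\}\). We show by induction on \(h\) that if the support of \(M\) has height at most \(h\) (with height \(-1\) meaning \(M=0\)), then \(M\) has a projective resolution of length at most \(h\). Taking \(h\) to be the height of \(\mcP\) then bounds the projective dimension of every presheaf, and hence the global dimension.

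For each \(p\in\mcP\), the representable presheaf \(\bQ\cdot\mcP(-,p)\) is projective by the Yoneda lemma. Its value is \(\bQ\) at each \(q\leq p\) and zero elsewhere. Homomorphisms out of \(\bQ\cdot\mcP(-,p)\otimes_\bQ V\) correspond to linear maps \(V\to M(p)\), so this presheaf is projective for any vector space \(V\). Hence the canonical map
\[
\bigoplus_{p\in\mcP} \bQ\cdot\mcP(-,p)\otimes_\bQ M(p)\longrightarrow M
\]
is an epimorphism from a projective. It is surjective because at level \(q\) the summand indexed by \(p=q\) already maps onto \(M(q)\) via the identity.

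To make the induction work, we restrict the sum to \(p\) in the support \(S\) of \(M\). The key observation concerns the kernel \(K\). At a point \(q\), the source is \(\bigoplus_{p\in S,\, p\geq q} M(p)\). If \(q\notin S\) then \(M(q)=0\), but the source need not vanish, so \(K\) could acquire support outside \(S\). This is the main obstacle: a naive count fails. We handle it by choosing a better cover, namely by summing only over the \emph{maximal} elements of the support layer by layer, as in Lemma \ref{lem:gdupperbound}. There the kernel is asserted to live in the support minus its maximal elements, which implicitly uses that the relevant supports are down-closed.

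We therefore first treat presheaves whose support is a down-set. The source \(\bigoplus_{p\in S}\bQ\cdot\mcP(-,p)\otimes M(p)\) has support inside the down-closure of \(S\), which is \(S\) itself. At each maximal element \(m\) of \(S\) the map is an isomorphism, since the only \(p\in S\) with \(p\geq m\) is \(m\). So \(K\) is supported in \(S\setminus\max(S)\), which has height one less. However, \(S\setminus\max(S)\) is again a down-set, so the induction closes for down-closed supports. For a general \(M\), let \(S\) be the down-closure of its support; it has the same height, since a chain in the down-closure extends to a chain ending in the support, and \(M\) vanishing on part of \(S\) is harmless. The same argument applies with \(S\) in place of the support: we use every \(p\in S\), and the map is still an isomorphism at the maximal elements of \(S\), which lie in the actual support. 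Iterating, the \(k\)-th syzygy is supported in a down-set of height at most \(h-k\). After \(h+1\) steps the syzygy is \(0\), so the projective dimension of \(M\) is at most \(h\leq\) the height of \(\mcP\).

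Finally, Lemma \ref{lem:presheafincidence} transports this bound to \(IA(\mcP)\dashMod\).
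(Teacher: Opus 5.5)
Your argument is essentially the paper's: the paper's \(L_pA\) is exactly \(\bQ\cdot\mcP(-,p)\otimes_\bQ A\), and both proofs cover \(M\) by \(\bigoplus_p L_pM(p)\) and induct as the kernel's support shrinks. Restricting to down-closed supports is the right way to make that induction honest. The kernel can acquire support outside that of \(M\): on the poset \(y<x\), the kernel of the cover of \(S_x\) is \(S_y\). The paper's one-line ``the support of the kernel is strictly smaller'' glosses over this.

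However, two of your statements are false as written.
\begin{itemize}
\item The induction hypothesis you announce first fails. It says that support of height at most \(h\) implies a projective resolution of length at most \(h\). Already for \(S_x\) on \(y<x\) the support \(\{x\}\) has height \(0\), but \(S_x\) is not projective, since \(\ext^1(S_x,S_y)\cong\bQ\) by part (2) of Theorem~\ref{thm:IZtheorem}.
\item The claim that the down-closure of the support has the same height as the support is also false. In the same example the down-closure \(\{y,x\}\) has height \(1\). Your justification only shows that a chain in the down-closure ends below a point of the support, not that the chain lies in the support.
\end{itemize}

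Neither error damages the lemma. State the induction as ``if \(M\) is supported in a down-set \(S\) of height at most \(h\), then \(M\) has projective dimension at most \(h\)''; this is exactly what your layer-peeling argument proves. Then finish by noting that every down-set of \(\mcP\) has height at most that of \(\mcP\).

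You should also delete the sentence about ``summing only over the maximal elements''. A sum over maximal elements alone is not surjective in general (take \(S_y\oplus S_x\) on \(y<x\)), and you do not actually use it.
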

\begin{proof}The argument here is similar to the one used to bound the global dimension of $\Coeff_{S,\bQ}^{\,G}$ in Lemma \ref{lem:gdupperbound}.  In detail, if $p$ is an object of $\mcP$, then the functor $\ev_p\colon \PreSh_{\bQ}(\cP)\to \bQ\dashMod$ has a left adjoint $L_p$ defined as follows:
\begin{equation}\label{extbyzeropresh}(L_pA)(q)=\begin{cases} A &\text{if $q\leq p$}\\
0  &\text{if $q\not\leq p$} \end{cases}
\end{equation}
In particular, the analog of Lemma \ref{lem:LHs Extension by Zero} holds for these left adjoints.  Given a presheaf $\underline{V}\in \PreSh_{\bQ}(\cP)$, the map 
\[\bigoplus_p L_p\underline{V}(p)\to \underline{V}\]
is a surjection from a projective presheaf.  The formula (\ref{extbyzeropresh}) implies this map is an isomorphism at $q$ whenever $q$ is a maximal element in $\mcP$ of the support of the presheaf $\underline{V}$.  Thus the support of the kernel is strictly smaller and so by induction we find that the height of $\mcP$ is an upper bound for the global dimension of $\PreSh_{\bQ}(\mcP)$.
\end{proof}

The notion of support for a presheaf \(M\) on \(\mcP\)
corresponds precisely to the notion of those \(x \in \mcP\)
such that \(e_x M \neq 0\). Using the presheaf description we may describe the simple modules. 

\begin{lemma}
    Let \(S_x\) be the module over \(IA(\mcP)\) whose
    presheaf is supported at \(x\) where it takes value \(\bQ\).
    The simple modules of \(IA(\mcP)\) are the elements of the set
    \[
    \{ S_x \mid x \in \mcP \}.
    \]
\end{lemma}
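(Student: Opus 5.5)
We need to show two things: each \(S_x\) is simple, and every simple module is isomorphic to some \(S_x\). Throughout we work in the presheaf picture of Lemma \ref{lem:presheafincidence}, where a module \(M\) is the data of the vector spaces \(e_yM\) together with the restriction maps \(f_{zy}\).

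\textbf{Step 1: each \(S_x\) is simple and the \(S_x\) are pairwise non-isomorphic.} A submodule of a presheaf is a subpresheaf, that is, a choice of subspaces \(N(y)\subseteq M(y)\) at each \(y\) that is stable under the restriction maps. Since \(S_x\) is zero away from \(x\) and equals \(\bQ\) at \(x\), any subpresheaf is \(0\) or \(\bQ\) at \(x\). Compatibility with the restriction maps is automatic, since every map into or out of the vertex \(x\) lands in or comes from a zero space. So the only subpresheaves are \(0\) and \(S_x\), and \(S_x\) is simple. Moreover the support of \(S_x\) is \(\{x\}\), so \(S_x\cong S_y\) forces \(x=y\).

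\textbf{Step 2: every simple module is some \(S_x\).} Let \(M\) be simple, so \(M\neq 0\). Since \(\mcP\) is finite, the support of \(M\) (the set of \(y\) with \(e_yM\neq 0\)) has a minimal element \(x\). Consider the subpresheaf \(N\) with \(N(x)=M(x)\) and \(N(y)=0\) for \(y\neq x\). It is stable under the restriction maps: the only maps leaving \(M(x)\) go to \(M(z)\) with \(z<x\), and these spaces vanish by minimality of \(x\). Hence \(N\) is a nonzero submodule of \(M\), and simplicity gives \(N=M\).

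So \(M\) is supported only at \(x\), and all of its structure maps are zero. Its subpresheaves are therefore exactly the subspaces of \(M(x)\). Simplicity then forces \(\dim M(x)=1\), which gives \(M\cong S_x\).

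\textbf{Main obstacle.} The key point is the orientation in Step 2. Because presheaves are contravariant, the structure maps go downward, from \(M(x)\) to \(M(z)\) with \(z<x\). This is why one must take a \emph{minimal} element of the support rather than a maximal one: at a minimal element nothing leaves the support, so the vertex \(x\) on its own cuts out a subpresheaf.
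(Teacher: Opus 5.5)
Your proof is correct and is the presheaf-level argument the paper has in mind; the paper gives no written proof and only remarks that the simple modules can be read off from the presheaf description of Lemma \ref{lem:presheafincidence}. One cosmetic quibble: a maximal element \(x\) of the support would also work, because the subpresheaf agreeing with \(M\) away from \(x\) and vanishing at \(x\) is then a proper submodule and hence zero, so ``must'' in your closing remark is too strong.
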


Following \cite{IZ90}, we may reduce the question of global dimension down to the question of \(\ext\) groups between simple modules. 

\begin{proposition}
    The global dimension of \(IA(\mcP) \dashMod\) is given by the 
    largest \(n \in \bN\) such that there are \(x,y \in \mcP\)
    with \(\ext_{IA(\mcP)}^n (S_x,S_y) \neq 0\).
\end{proposition}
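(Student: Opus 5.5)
We reduce to simple modules using that \(IA(\mcP)\) is a finite-dimensional algebra and that every finitely generated module has a finite composition series with factors among the \(S_x\). Let \(d\) be the global dimension of \(IA(\mcP)\dashMod\). By Lemma \ref{rem:presheaves_upperBound}, \(d\) is finite, bounded by the height of \(\mcP\). Let \(m\) be the largest \(n\) with \(\ext^n(S_x,S_y)\neq 0\) for some \(x,y\). Clearly \(m\le d\), so it remains to show \(\ext^n(M,N)=0\) for all modules \(M,N\) and all \(n>m\).

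First treat finitely generated \(M\) and \(N\), by induction on composition length. Every module is finite dimensional over \(\bQ\), so it has a composition series whose factors are simple, hence of the form \(S_x\). Given a short exact sequence \(0\to N'\to N\to N''\to 0\), the long exact sequence gives an exact segment \(\ext^n(M,N')\to\ext^n(M,N)\to\ext^n(M,N'')\). If the outer terms vanish for \(n>m\), so does the middle term. Inducting on the length of \(N\) reduces to \(N=S_y\), and inducting in the same way on the length of \(M\), using the contravariant long exact sequence, reduces to \(M=S_x\). This case holds by the definition of \(m\).

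To pass to arbitrary modules, use the standard fact that for a left Noetherian ring the global dimension equals the supremum of the projective dimensions of the cyclic modules \(R/I\) (Auslander's theorem). Since \(IA(\mcP)\) is finite dimensional, every cyclic module is finitely generated. A finitely generated module \(M\) over a Noetherian ring has a resolution by finitely generated projectives. Hence \(\mathrm{pd}\,M\le n\) if and only if \(\ext^{n+1}(M,N)=0\) for all finitely generated \(N\): the syzygy \(\Omega^n M\) is finitely generated, and it is projective exactly when \(\ext^1(\Omega^n M,-)\) vanishes on finitely generated modules, by testing against the surjection from a finitely generated free module onto \(\Omega^n M\). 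Combining this with the previous step gives \(d\le m\).

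The main point requiring care is this passage from finitely generated to arbitrary modules. Auslander's theorem handles it cleanly, and we would cite it rather than reprove it. Alternatively, as in \cite{IZ90}, one can quote directly that for finite-dimensional algebras the global dimension is the maximal projective dimension of a simple module, and that \(\mathrm{pd}\,S_x=\max\{n : \ext^n(S_x,S_y)\neq 0 \text{ for some } y\}\). The latter follows from minimal projective resolutions, since the multiplicity of the indecomposable projective \(L_y\bQ\) in the \(n\)-th term equals \(\dim\ext^n(S_x,S_y)\).
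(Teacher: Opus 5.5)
Your argument is correct and follows the same d\'evissage as the paper: finite-dimensional modules have composition series with simple factors \(S_x\), and the long exact sequences for \(\ext\) in each variable then reduce vanishing to pairs of simples. Your extra step via Auslander's theorem (or, alternatively, via projective dimensions of simples and minimal resolutions) makes explicit the passage from finitely generated to arbitrary modules, which the paper's one-line proof glosses over when it asserts that every module has a finite composition series.
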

\begin{proof}
     The ring \(IA(\mcP)\) is a finite-dimensional algebra over a field. Hence every left module has a finite composition series with semi-simple composition factors. 
     The long exact sequence of  \(\ext\) groups 
     shows the global dimension is bounded by  \(\ext\) between semi-simple modules. The result follows. 
\end{proof}  

The values of \(\ext_{IA(\mcP)}^n (S_x,S_y)\) can be determined
by looking at the geometric realisation of the nerve of a certain sub-poset of \(\mcP\), as detailed in \cite[Proposition 1.1 and Theorem 1.2]{IZ90}. We paraphrase that result in the following, where we use \(\abs{\mhyphen}\) to denote
the geometric realisation of the nerve of a poset. 

\begin{theorem}[{\cite[Proposition 1.1 and Theorem 1.2]{IZ90}}]\label{thm:IZtheorem}
    Let \(\mcP\) be a finite poset and let \(x,y\) be elements of \(\mcP\).
\begin{enumerate}
    \item If \(y<x\) and the \emph{interval}
    \[
    I(x,y)=\{ z \in \mcP \mid y<z<x\}
    \]
    is non-empty, then 
    \[
    \ext_{IA(\mcP)}^n (S_x,S_y)
    \cong
    \widetilde{H}^{n-2} ( \abs{I(x,y)}, \bQ).    
    \]
    
    \item If \(y<x\) and \(I(x,y) \) is empty, then 
    \[
    \ext_{IA(\mcP)}^* (S_x,S_y)
    \]
    is \(\bQ\) concentrated in degree one. 
        
    \item If \(x=y\), then
    \[
    \ext_{IA(\mcP)}^* (S_x,S_x)
    \]
    is \(\bQ\) concentrated in degree zero. 

    \item If \(y \not\leq x\), then 
    \[
    \ext_{IA(\mcP)}^n (S_x,S_y) = 0.
    \]
    \end{enumerate}
\end{theorem}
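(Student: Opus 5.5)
The plan is to compute \(\ext_{IA(\mcP)}^*(S_x,S_y)\) directly from an explicit projective resolution of \(S_x\), namely the normalized bar resolution relative to the semisimple subalgebra of idempotents. Let \(E=\prod_{z\in\mcP}\bQ e_z\subseteq IA(\mcP)\) and let \(J\) be the span of the elements \(f_{zw}\) with \(z<w\). Then \(J\) is the Jacobson radical and \(IA(\mcP)=E\oplus J\) as \(E\)-bimodules. Because \(E\) is semisimple, the relative bar construction
\[
\cdots\to IA(\mcP)\otimes_E J^{\otimes_E n}\otimes_E S_x\to\cdots\to IA(\mcP)\otimes_E S_x\to S_x
\]
is a projective resolution of \(S_x\). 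It is exact by the usual contracting homotopy, which is \(E\)-linear, and each term is a direct sum of the modules \(IA(\mcP)e_z=L_z\bQ\), which are projective.

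The first step is to identify the terms combinatorially. The tensor power \(J^{\otimes_E n}\otimes_E S_x=J^{\otimes_E n}e_x\) has a basis given by the strict chains \(z_0<z_1<\cdots<z_n=x\), each basis vector being \(f_{z_0z_1}\otimes\cdots\otimes f_{z_{n-1}z_n}\). Applying \(\hom_{IA(\mcP)}(IA(\mcP)\otimes_E -,S_y)\cong\hom_E(-,S_y)\) then gives a cochain complex \(C^n\): the \(\bQ\)-vector space on strict chains of length \(n\) running from \(y\) to \(x\).

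The second step is to identify the differential. In the bar differential, the two outer faces act on the ends of the chain. One multiplies the first factor into \(IA(\mcP)\), which maps to zero in \(\hom(-,S_y)\) because \(J\) annihilates the simple module \(S_y\). The other multiplies the last factor into \(S_x\), which is zero because \(JS_x=0\). So only the inner faces survive, and these compose \(f_{z_{i-1}z_i}f_{z_iz_{i+1}}=f_{z_{i-1}z_{i+1}}\), which deletes the interior vertex \(z_i\). Dually, the coboundary sends a chain \(y<z_1<\cdots<z_{n-1}<x\) to the alternating sum of all ways of inserting one further interior point.

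Reading off the result, the interior vertices \(\{z_1<\cdots<z_{n-1}\}\) form an \((n-2)\)-simplex of the order complex of \(I(x,y)\), with \(n=1\) corresponding to the empty simplex. Hence \(C^\bullet\) is the augmented simplicial cochain complex of \(\abs{I(x,y)}\) shifted by two, and \(\ext^n(S_x,S_y)\cong\widetilde H^{n-2}(\abs{I(x,y)},\bQ)\). This gives part (1). The remaining cases are degenerate versions of the same count:
\begin{itemize}
\item If \(I(x,y)\) is empty, the only chain is \(y<x\), so \(\ext^*\) is \(\bQ\) in degree \(1\). This is \(\widetilde H^{-1}\) of the empty complex and gives part (2).
\item If \(x=y\), the only chain is the length-\(0\) chain, giving part (3).
\item If \(y\not\le x\), there are no chains at all, giving part (4).
\end{itemize}

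I expect the main obstacle to be the bookkeeping in the second step. One must verify carefully, with the contravariant presheaf conventions of Lemma \ref{lem:presheafincidence}, that the outer faces really vanish after applying \(\hom(-,S_y)\), and that the signs of the inner faces match the simplicial coboundary up to a global sign, which does not affect cohomology.
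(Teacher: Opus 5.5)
Your argument is correct. The paper does not prove this statement; it quotes it from \cite{IZ90} and uses it as a black box. So there is no internal proof to match, and what you give is a self-contained replacement by the standard $E$-relative bar resolution route.

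The ingredients that make it work are all in your sketch:
\begin{itemize}
\item Since $IA(\mcP)=E\oplus J$ as $E$-bimodules and $J\cdot J\subseteq J$, the modules $IA(\mcP)\otimes_E J^{\otimes_E n}\otimes_E S_x$ form a subcomplex of the relative bar complex.
\item This subcomplex is contracted by $a_0\otimes a_1\otimes\cdots\mapsto 1\otimes\pi(a_0)\otimes a_1\otimes\cdots$, where $\pi$ is the bimodule projection onto $J$.
\item Each term is a sum of the projectives $IA(\mcP)e_z$, because $E$ is semisimple.
\item After applying $\hom_{IA(\mcP)}(IA(\mcP)\otimes_E -,S_y)\cong\hom_E(-,S_y)$, both outer faces die because $J$ annihilates both simples.
\end{itemize}

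Compared with the citation, your route makes the paper independent of \cite{IZ90} at this point. It also checks the direction conventions, which genuinely matter here: the paper's left modules are presheaves, with maps $e_xM\to e_yM$ for $y<x$. Your chains run upward from $y=z_0$ to $z_n=x$, consistent with $\ext^1(S_x,S_y)\neq 0$ for a covering relation $y<x$.

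Your argument also treats all four parts uniformly. Parts (2)--(4) are the cases where the cochain complex is $\bQ$ in a single degree or zero. With the convention $\widetilde H^{-1}(\emptyset)=\bQ$, part (2) is literally the empty case of part (1).

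The only point left implicit is standard: the nerve of $I(x,y)$, whose nondegenerate simplices are exactly the strict chains, realises to the order complex you use.
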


\begin{example}\label{ex:cyclicpower}
    Consider the case of a cyclic group \(C_{p^n}\) of prime-power order. The poset \(\mcP=\Sub_{C_{p^n}}\) may be drawn as
\begin{center}
\begin{tikzpicture}[scale=1]
      \node (A) at (0,0) {\(C_{p^n}\)};
	 \node (B) at (2,0) {\(C_{p^{n-1}}\)};
	 \node (C) at (4,0) {\(C_{p^{n-2}}\)};
      \node (D) at (6,0) {\(\cdots\)};
      \node (E) at (8,0) {\(e\)};
      \draw[<-,] (A) to (B) ;
	 \draw[<-,] (B) to (C) ;
	 \draw[<-,] (C) to (D) ;
	 \draw[<-,] (D) to (E) ;
\end{tikzpicture}
\end{center}
    Theorem \ref{thm:IZtheorem} tells us that for \(i \leq n\) and 
    \(j+1 \leq n\)
    \[
    \begin{array}{rcl}
    \ext_{IA(\mcP)}^0 (S_{C_{p^i}},S_{C_{p^i}})
    &=& \bQ\\
    \ext_{IA(\mcP)}^1 (S_{C_{p^{j+1}}},S_{ C_{p^j}})
    &=& \bQ
    \end{array}
    \]
    and all other \(\ext\)-groups between these simple modules are zero since the geometric realisation of any non-empty interval will be a \(k\)-simplex and hence has trivial reduced cohomology.  Hence, the global dimension of \(IA(\mcP)\) is 1. We conclude that the global dimension of 
    \(\Coeff_{\bQ}^{\,C_{p^n}}\) is bounded below by 1.
    This is much lower than the upper bound of Lemma \ref{lem:gdupperbound} which is \(n\).
\end{example}

\begin{example}\label{ex:extcpq}
    Consider the case of a cyclic group \(C_{pq}\) for \(p,q\) distinct primes. The poset \(\mcP=\Sub_{C_{pq}}\) may be drawn as
\begin{center}
\begin{tikzpicture}[scale=0.7]
      \node (A) at (0,4) {\(C_{pq}\)};
	 \node (B) at (-2,2) {\(C_{q}\)};
	 \node (C) at (2,2) {\(C_{p}\)};
      \node (D) at (0,0) {\(e\)};
      \draw[<-] (A) to (B) ;
	 \draw[<-] (A) to (C) ;
	 \draw[<-] (B) to (D) ;
	 \draw[<-] (C) to (D) ;
\end{tikzpicture}
\end{center}
    Theorem \ref{thm:IZtheorem} tells us that \(\ext\)  from a simple to itself is concentrated in degree zero, that
    \[
    \begin{array}{rcl}
    \ext_{IA(\mcP)}^1 (S_{C_{pq}},S_{C_{q}})
    &=& \bQ\\
    \ext_{IA(\mcP)}^1 (S_{C_{pq}},S_{C_{p}})
    &=& \bQ\\
    \ext_{IA(\mcP)}^1 (S_{C_{q}},S_{e})
    &=& \bQ\\
    \ext_{IA(\mcP)}^1 (S_{C_{p}},S_{e})
    &=& \bQ\\
    \ext_{IA(\mcP)}^2 (S_{C_{pq}},S_{e})
    &=& \bQ
    \end{array}
    \]
    and all other terms are zero. For the last case, 
    the interval is simply the poset \(\{ C_{q}, C_{p} \}\)  with no maps. This realises to \(S^0\) which gives a non-zero \(\ext^2\).
    
    The global dimension of \(IA(\mcP)\) is therefore 2 and the global dimension of \(\Coeff_{\bQ}^{\,C_{pq}}\) is bounded below by 2.
    This is also the upper bound of Lemma \ref{lem:gdupperbound}, so we conclude that the global dimension of \(\Coeff_{\bQ}^{\,C_{pq}}\) is 2. 
\end{example}

\begin{example}
    Consider the case of a cyclic group \(C_{pqr}\) for \(p,q,r\) distinct primes. The poset \(\mcP=\Sub_{C_{pqr}}\) may be drawn as follows.
\begin{center}
\tdplotsetmaincoords{-45}{135}
\begin{tikzpicture}[tdplot_main_coords]

    \def \x {-1.6} ;
  \node (e) at (\x,\x,\x) {$C_{pqr}$};
  \node (p) at (-\x,\x,\x) {$C_{qr}$};
  \node (q) at (\x,-\x,\x) {$C_{pr}$};
  \node (r) at (\x,\x,-\x) {$C_{pq}$};
  \node (pq) at (-\x,-\x,\x) {$C_{r}$};
  \node (pr) at (-\x,\x,-\x) {$C_{q}$};
  \node (qr) at (\x,-\x,-\x) {$C_{p}$};
  \node (pqr) at (-\x,-\x,-\x) {$e$};

      \draw[<-] (e) to (p) ;
	 \draw[<-] (e) to (q) ;
	 \draw[<-] (e) to (r) ;
      \draw[<-] (p) to (pq) ;
	 \draw[<-] (p) to (pr) ;
	 \draw[<-] (q) to (pq) ;
      \draw[<-] (q) to (qr) ;
	 \draw[<-] (r) to (pr) ;
	 \draw[<-] (r) to (qr) ;
      \draw[<-] (pq) to (pqr) ;
	 \draw[<-] (pr) to (pqr) ;
	 \draw[<-] (qr) to (pqr) ;
\end{tikzpicture}
\end{center}
    Looking at the diagram we see that the most interesting interval is that from \(C_{pqr}\) to \(e\), \(I(C_{pqr},e)\). 
    This interval consists of all non-trivial proper subgroups and its realisation is the circle, hence 
    \[
    \begin{array}{rcl}
    \ext_{IA(\mcP)}^3 (S_{C_{pqr}},S_{e})
    &=& \bQ.\\
    \end{array}
    \]
    One may check that all other non-zero \(\ext\) groups are in lower degrees.
    Hence, the global dimension of \(IA(\mcP)\) is 3, which is also the upper bound of Lemma \ref{lem:gdupperbound}. We conclude that the global dimension of \(\Coeff_{\bQ}^{\,C_{pqr}}\) is 3.
\end{example}

We learnt of the following from \cite{IZ90}, which provides an upper bound for many simple posets. 

\begin{theorem}[{\cite[Proposition 3.4]{AG87}}]\label{thm:planaristwo}
    If \(\mcP\) is a finite planar poset, then the global dimension of \(IA(\mcP)\) is at most 2. 
\end{theorem}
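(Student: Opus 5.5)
The plan is to reduce the statement to a topological vanishing result, using the Igusa--Zacharia computation in Theorem~\ref{thm:IZtheorem}. By the proposition reducing global dimension to \(\ext\) between simples, it suffices to show that \(\ext^n_{IA(\mcP)}(S_x,S_y)=0\) for all \(n\geq 3\) and all \(x,y\in\mcP\). Cases (2)--(4) of Theorem~\ref{thm:IZtheorem} only give \(\ext\) in degrees \(0\) and \(1\). So we may assume \(y<x\) with \(I(x,y)\) non-empty, and we must show
\[
\widetilde{H}^{k}(\abs{I(x,y)},\bQ)=0 \qquad\text{for all } k\geq 1.
\]
I will prove the stronger statement that every path component of \(\abs{I(x,y)}\) is contractible, so that \(\abs{I(x,y)}\) is homotopy equivalent to a finite discrete set.

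\textbf{Step 1 (reduction to a bounded planar poset).} The closed interval \([y,x]\) is again planar. Its cover relations are cover relations of \(\mcP\), so its Hasse diagram is a subdiagram of a planar upward drawing of \(\mcP\). It has a top element \(x\) and a bottom element \(y\). I then invoke the classical fact (Kelly--Rival) that a finite bounded planar poset is a lattice.

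\textbf{Step 2 (crosscut complex).} Because \(L=[y,x]\) is a lattice, the crosscut theorem applies with the crosscut given by the atoms \(a_1,\dots,a_k\) of \(L\). It says \(\abs{I(x,y)}\) is homotopy equivalent to the complex \(\Gamma\) on vertex set \(\{a_1,\dots,a_k\}\) whose faces are the sets of atoms with join strictly below \(x\).

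\textbf{Step 3 (planarity makes faces consecutive).} Order the atoms left to right in the planar drawing. The key claim is the following: if \(i<l<j\), then \(a_l\leq a_i\vee a_j\) whenever \(a_i\vee a_j<x\). The idea is that the two upward chains from \(a_i\) and from \(a_j\) to \(a_i\vee a_j\), together with the bottom \(y\), bound a region of the plane. Any upward chain from \(a_l\) to the top \(x\) must leave this region, and by planarity it can only do so through an element of one of the two chains. That forces \(a_l\leq a_i\vee a_j\).

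It follows that every face of \(\Gamma\) can be enlarged to a set of consecutive atoms with the same join. Hence the maximal faces of \(\Gamma\) are integer intervals \(\{a_p,a_{p+1},\dots,a_q\}\). Any intersection of such intervals is again an interval or empty, so each such intersection is a simplex or empty. By the nerve lemma, each component of \(\Gamma\) is homotopy equivalent to the nerve of a family of intervals on a line whose union is connected. Such a nerve is contractible, for instance by induction collapsing the rightmost interval.

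\textbf{Conclusion and main obstacle.} Steps 1--3 show that \(\widetilde H^{\geq 1}(\abs{I(x,y)})=0\), so \(\ext^{\geq 3}\) vanishes between all simples, and the global dimension is at most \(2\). The main obstacle is Step 3, together with the input that bounded planar posets are lattices: both require a careful Jordan-curve-style argument on the upward drawing. If citing Kelly--Rival is undesirable, I would try to prove the lattice property directly with the same region-trapping argument used in Step 3.
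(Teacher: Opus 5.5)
Your proof is correct, but it takes a different route from the paper. The paper gives no argument of its own here: it imports the bound from \cite[Proposition 3.4]{AG87}. You derive it inside the paper's framework from Theorem~\ref{thm:IZtheorem}, by proving that every open interval of a finite planar poset is homotopy equivalent to a finite discrete set. The ingredients are Kelly--Rival, the crosscut theorem on atoms, and the observation that faces of the crosscut complex can be enlarged to runs of consecutive atoms.

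This gives more than the citation. For \(y<x\) with \(I(x,y)\neq\emptyset\), \(\ext^*_{IA(\mcP)}(S_x,S_y)\) is concentrated in degree \(2\), of dimension one less than the number of components of \(\abs{I(x,y)}\). The two-point interval \(I((D_5),(e))\) in the paper's \(F_5\) example is an instance. Your argument also shows that it is \emph{upward} planarity that matters. The Hasse diagram of the Boolean lattice \(\Sub_{C_{pqr}}\) is the cube graph, which is planar as an abstract graph, yet the paper shows its incidence algebra has global dimension \(3\). The height function in your Step 3 is exactly what excludes this. The price is reliance on two classical order-theoretic facts and a Jordan-curve argument that you still need to write out.

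When you write it out, define the left-to-right order of the atoms as the order in which the edges from \(y\) leave \(y\) (the rotation at \(y\)). This is what the region argument uses, and it need not agree with the horizontal order of the points \(a_i\). Then record three facts.
\begin{enumerate}
\item Upward chains from \(a_i\) and \(a_j\) to \(m=a_i\vee a_j\) meet only at \(m\), since a common element is an upper bound of both. So, together with the two edges at \(y\), they form a simple closed curve.
\item This curve lies at heights at most that of \(m\), so \(x>m\) lies in the unbounded component. The part of a small disk at \(y\) lying below \(y\) is also exterior. Hence the edge from \(y\) to \(a_l\), which leaves \(y\) between the two cycle edges on the side away from the downward direction, starts in the interior.
\item The atom \(a_l\) is not on the curve, because every curve vertex other than \(y\) lies above \(a_i\) or \(a_j\).
\end{enumerate}
An upward chain from \(a_l\) to \(x\) must therefore pass through a curve vertex \(z\neq y\), which gives \(a_l\leq z\leq m\).

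For the last step, note that the nerve of a finite family of closed real intervals is homotopy equivalent to their union. With these details filled in, the argument is complete.
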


Theorem \ref{thm:IZtheorem} gives us an effective method for calculating the 
global dimension of presheaves on a poset. 
We finish this section by connecting this result to coefficient systems. 
That is accomplished by the following two theorems, which show that adding in group actions does not change the global dimension. 

\begin{theorem}\label{thm:equiv_pres=pres}
    For \(G\) a finite group, the global dimension of \(G\)-objects in rational presheaves on a poset \(\mcP\) equals the global dimension of rational presheaves on \(\mcP\).
\end{theorem}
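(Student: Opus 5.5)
The category of \(G\)-objects in \(\PreSh_\bQ(\mcP)\) is the category of functors \(\mcP^{\op}\to \bQ[G]\dashMod\). Equivalently, it is the category of modules over \(IA(\mcP)\otimes_\bQ \bQ[G]\), or presheaves on \(\mcP\) with values in \(\bQ[G]\)-modules. I would prove the two inequalities separately, using the adjunction machinery of Section \ref{sec:adjunctions} for the lower bound and Maschke's theorem for the upper bound.

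\emph{Lower bound.} Let \(\mathrm{triv}\colon \PreSh_\bQ(\mcP)\to G\mhyphen\PreSh_\bQ(\mcP)\) give each presheaf the trivial \(G\)-action. This functor is exact and fully faithful, since a \(G\)-map between trivial \(G\)-objects is just a map. Its left adjoint is objectwise coinvariants \((-)_G\) and its right adjoint is objectwise invariants \((-)^G\). Because coinvariants and invariants are computed pointwise, they assemble into presheaves. Rationally the natural composite \(A^G\to A\to A_G\) is an isomorphism, exactly as in the Example on inflation and in the proof of Proposition \ref{prop:quintlocalnoaction}. Hence the two adjoints agree and are exact, so \(\mathrm{triv}\) is part of a quintessential localisation. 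Corollary \ref{cor:quintlocal_gldim} then gives that the global dimension of \(G\)-objects is at least that of \(\PreSh_\bQ(\mcP)\).

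\emph{Upper bound.} Consider the forgetful functor \(U\colon G\mhyphen\PreSh_\bQ(\mcP)\to\PreSh_\bQ(\mcP)\). It is exact, and its left and right adjoints are \(\bQ[G]\otimes_\bQ -\) and \(\hom_\bQ(\bQ[G],-)\). These adjoints are isomorphic for finite \(G\) and are exact, so \(U\) preserves projectives and \(\bQ[G]\otimes-\) preserves projectives. Hence for \(G\)-objects \(M,N\),
\[
\ext^*_{G}(\bQ[G]\otimes UM, N)\cong \ext^*_{\PreSh}(UM,UN).
\]
I would then use Maschke's theorem to show that every \(G\)-object \(M\) is a natural retract of \(\bQ[G]\otimes UM\). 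The multiplication map \(\bQ[G]\otimes UM\to M\), \(g\otimes m\mapsto gm\), is split by \(m\mapsto \frac{1}{|G|}\sum_g g\otimes g^{-1}m\). This splitting is \(G\)-equivariant and compatible with the restriction maps of the presheaf, because it is built from the \(G\)-action, which commutes with restrictions.

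It follows that \(\ext^n_G(M,N)\) is a retract of \(\ext^n_{\PreSh}(UM,UN)\). Therefore it vanishes above the global dimension of \(\PreSh_\bQ(\mcP)\), which gives the reverse inequality.

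\emph{Main obstacle.} The step needing most care is the naturality and equivariance of the averaging splitting, and checking that \(\ext\) in the \(G\)-category computes correctly via the adjunction with \(\bQ[G]\otimes-\). I would verify the latter by applying the adjunction to a projective resolution of \(UM\), which \(\bQ[G]\otimes-\) carries to a projective resolution, exactly as in the proof of Lemma \ref{lem:adjunction_and_global_dim}. Both categories are module categories over finite-dimensional algebras, so enough projectives is automatic.
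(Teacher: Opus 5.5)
Your proof is correct, but it takes a different route from the paper's.

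\textbf{The paper's route.} The paper works in \((IA(\mcP)\dashMod)[G]\cong IA(\mcP)\otimes_\bQ\bQ[G]\dashMod\) and reduces to \(\ext\) between simple objects. Using semisimplicity of \(\bQ[G]\), it asserts three facts:
\begin{itemize}
\item the simples are \(S_x\otimes V\) with \(V\) irreducible;
\item \(\ext\) vanishes between different isotypic types;
\item \(\ext^n(S_x\otimes V,S_y\otimes V)\neq 0\) exactly when \(\ext^n_{IA(\mcP)}(S_x,S_y)\neq 0\).
\end{itemize}
Hence the two categories have the same nonvanishing degrees.

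\textbf{Your route.} You never look at simples. The lower bound comes from the trivial-action embedding being a quintessential localisation, since invariants and coinvariants agree rationally. The upper bound comes from two ingredients:
\begin{itemize}
\item the Eckmann--Shapiro isomorphism \(\ext^*_G(\bQ[G]\otimes UM,N)\cong\ext^*(UM,UN)\);
\item the averaging splitting of \(\bQ[G]\otimes UM\to M\).
\end{itemize}
The checks you carry out are exactly what is needed: the splitting is \(G\)-equivariant and commutes with restrictions, and \(\bQ[G]\otimes-\) is exact and preserves projectives because \(U\) is exact. Naturality of the splitting is not actually required; any retraction in the \(G\)-category suffices.

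\textbf{What each approach buys.} Your argument uses nothing about posets or finite-dimensional algebras. It therefore proves \(\gldim(\mcA[G])=\gldim(\mcA)\) for any \(\bQ\)-linear abelian category \(\mcA\) with enough projectives and any finite \(G\). It also avoids the paper's unproved ``key facts.'' Over \(\bQ\) the endomorphism rings of irreducible representations are division algebras, so the paper's third item tacitly needs a Wedderburn/Morita reduction plus a flat base-change argument.

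The paper's approach gives finer information that slots directly into the Igusa--Zacharia framework used later. It identifies exactly which pairs of simples carry nonzero \(\ext\) in the equivariant category. So the intervals realising the global dimension of \(\PreSh_\bQ(\mcP)\) visibly realise it for \(G\)-objects too.
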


\begin{proof}
For \(\cC\) a category, let \(\cC[G]\) denote the category of objects in \(\cC\) with an action of \(G\)
and morphisms those maps which commute with the \(G\)-actions.     
Consider the category of presheaves on a poset \(\mcP\) with values in \(\bQ[G]\)-modules. This category is isomorphic to the category of \(G\)-objects in rational presheaves on a poset \(\mcP\). By Lemma \ref{lem:presheafincidence}, that category is, in turn, isomorphic to the category of \(G\)-objects in \(IA(\mcP)\dashMod\), as displayed below 
\[
\PreSh_{\bQ[G]}(\mcP) \cong
\PreSh_\bQ (\mcP)[G] 
\cong \left(IA(\mcP)\dashMod \right)[G].
\]
For a finite group \(G\), the category of \(\bQ[G]\)-modules is semi-simple, which implies the following key facts for \(G\)-objects in the category of modules over the rational incidence algebra \(IA(\mcP)\).  
\begin{enumerate}
    \item A simple object has the form \(S_H \otimes V\) where \(V\) is an irreducible representation of \(G\).
    \item If \(V,W\) are non-isomorphic irreducible representations, then 
    \[
    \ext_{\left( IA(\mcP)\dashMod \right)[G]}^n( S_H \otimes V, S_K \otimes W) =0.
    \]
    \item If \(V,W\) are isomorphic irreducible representations, then 
    \[
    \ext_{\left( IA(\mcP)\dashMod \right)[G]}^n( S_H \otimes V, S_K \otimes W)\neq 0 \text{\quad if and only if \quad} 
     \ext_{IA(\mcP)}^n( S_H , S_K ) \neq 0. \qedhere
    \]
\end{enumerate}
\end{proof}

When \(G\) is abelian, a \(G\)-coefficient system may be thought of as an object of \( \PreSh_\bQ (\mcP)[G] \) for \(\mcP=\Sub_G\)  satisfying a Weyl condition (that is, its value is \(H\)-fixed at place \(H\)). Therefore we can consider an inclusion functor 
\[\Coeff_\bQ^{\,G} \longrightarrow \PreSh_\bQ(\Sub_G)[G] ,\]
which has naturally isomorphic left and right adjoints given by \(H\)-orbits and \(H\)-fixed points at place \(H\), respectively. 
Combining this observation with Proposition \ref{prop:quintlocalnoaction},  we can sandwich \(G\)-coefficient systems between two presheaf categories of the same global dimension.

\begin{theorem}[Abelian Sandwich]\label{thm:sandwich}
    For \(G\) a finite abelian group, there are quintessential subcategories 
    \[
    \PreSh_{\bQ}(\Sub_G) \longrightarrow     \Coeff_\bQ^{\,G}
    \longrightarrow    \PreSh_\bQ (\Sub_G)[G]. 
    \]  
    Thus the global dimension of rational \(G\)-coefficient systems is the same as the global dimension of rational presheaves on \(\Sub_G\).
\end{theorem}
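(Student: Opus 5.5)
The proof has two parts: verify that each of the two inclusions is a quintessential subcategory, then combine Corollary \ref{cor:quintlocal_gldim} with Theorem \ref{thm:equiv_pres=pres}.

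\emph{First inclusion.} For \(\mcS=\orb_G\) we have \(\widetilde{\mcS}={\Sub_G}/G=\Sub_G\), since \(G\) is abelian. The functor \(\underline{\phi}^*\colon \PreSh_\bQ(\Sub_G)\to\Coeff_\bQ^{\,G}\) is therefore the first map. We must check that it is fully faithful. A natural transformation \(\underline{\phi}^*F\to\underline{\phi}^*F'\) consists of linear maps \(F(H)\to F'(H)\) that commute with every \(\phi(f)\). Since \(\phi\) is surjective on morphisms, these are exactly the presheaf maps \(F\to F'\). By Proposition \ref{prop:quintlocalnoaction}, \(\underline{\phi}^*\) has naturally isomorphic left and right adjoints, so it is part of a quintessential localisation.

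\emph{Second inclusion.} For abelian \(G\), a coefficient system is the same as a presheaf on \(\Sub_G\) with values in \(\bQ[G]\)-modules whose value at \(H\) is \(H\)-fixed. Restriction maps are automatically equivariant, because translation by \(g\) commutes with the projections \(G/K\to G/H\). Under this identification, the inclusion into \(\PreSh_\bQ(\Sub_G)[G]\) is fully faithful and exact. Its left and right adjoints send \(M\) to the presheaf \(H\mapsto M(H)_H\) and \(H\mapsto M(H)^H\), respectively. These are again presheaves, since a restriction \(M(H)\to M(K)\) for \(K\le H\) carries \(H\)-fixed points into \(K\)-fixed points and descends to coinvariants. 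The natural composite \(A^H\to A\to A_H\) is an isomorphism rationally, which identifies the two adjoints. This gives the second quintessential subcategory.

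\emph{Conclusion and main obstacle.} Every category in the chain is a module category over a finite-dimensional algebra, so each has enough projectives. Applying Corollary \ref{cor:quintlocal_gldim} twice gives
\[
\gldim \PreSh_\bQ(\Sub_G)\le \gldim\Coeff_\bQ^{\,G}\le \gldim \PreSh_\bQ(\Sub_G)[G],
\]
and Theorem \ref{thm:equiv_pres=pres} says the two outer terms are equal. The main obstacle is carefully identifying \(\Coeff_\bQ^{\,G}\) with \(H\)-fixed \(G\)-presheaves. In particular, we must check that the equivariance data of a functor \(\orb_G^{\op}\to\bQ\dashMod\) is exactly a \(G\)-action on each value, with \(H\) acting trivially at \(G/H\), and compatible restrictions. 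This check uses that every map \(G/K\to G/H\) is a translate of the canonical projection.
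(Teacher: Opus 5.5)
Your route is the paper's. You use \(\underline{\phi}^*\) with Proposition \ref{prop:quintlocalnoaction} for the first arrow, and your explicit fully-faithfulness check is a useful addition, since that proposition only identifies the adjoints. You then use the inclusion of \(H\)-fixed \(G\)-presheaves for the second arrow.

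\textbf{The gap is in the second arrow.} The claim that restriction \(M(H)\to M(K)\) ``descends to coinvariants'' is false. For \(h\in H\setminus K\), the element \(hm-m\) maps to \(h\,r(m)-r(m)\). This need not vanish in \(M(K)_K\), where only the \(K\)-action is killed.

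Here is a concrete counterexample. Let \(G=C_2\), and let \(M\in\PreSh_\bQ(\Sub_{C_2})[C_2]\) be the sign representation \(\bQ_-\) at both \(C_2\) and \(e\), with identity restriction.
\begin{itemize}
\item A map from \(M\) to a coefficient system \(N\) is zero at \(C_2\), because \(N(C_2)\) has trivial action. By compatibility with restriction it is then zero at \(e\). So the left adjoint sends \(M\) to \(0\).
\item The right adjoint does not send \(M\) to \(0\). For \(N\) with \(N(C_2)=0\) and \(N(e)=\bQ_-\), there is a nonzero map \(N\to M\).
\item This \(N\) is a coefficient system and a subobject of \(M\). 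So the left adjoint carries the monomorphism \(N\hookrightarrow M\) to \(N\to 0\).
\end{itemize}
Hence the left adjoint is not exact, and the inclusion is not part of any quintessential localisation. The paper's sketch asserts the same orbits/fixed-points description, so you have reproduced its claim. Still, this step cannot be justified.

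\textbf{The global-dimension conclusion survives with a small change.} The right adjoint \(H\mapsto M(H)^H\) is correct:
\begin{itemize}
\item \(M(H)^H\) is \(G\)-stable because \(G\) is abelian.
\item Restriction sends it into \(M(K)^H\subseteq M(K)^K\).
\item Any map from a coefficient system into \(M\) factors through it.
\item It is exact, because invariants of a finite group are exact over \(\bQ\).
\end{itemize}
The inclusion is fully faithful and exact, since kernels and cokernels are computed levelwise and \(H\)-triviality passes to sub- and quotient objects. So Lemma \ref{lem:adjunction_and_global_dim} applies directly, with no left adjoint needed, and gives \(\gldim\Coeff_\bQ^{\,G}\le\gldim\PreSh_\bQ(\Sub_G)[G]\). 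Combined with Corollary \ref{cor:quintlocal_gldim} for \(\underline{\phi}^*\) and Theorem \ref{thm:equiv_pres=pres}, this yields the equality. What you must drop is the assertion that the second arrow exhibits a quintessential subcategory.
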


\section{Global dimension of \texorpdfstring{\( \OMackey_\bQ^G\)}{O-MackeyQ(G)} for \texorpdfstring{\(G\)}{G} abelian and disk-like \texorpdfstring{\(\cO\)}{O}}\label{sec:gldimmackey}
Our main method for calculating global dimension comes from
translating to presheaves by the Abelian Sandwich Theorem, Theorem \ref{thm:sandwich}, and 
calculating homology of a geometric realisation of an interval in \(\Sub_G\)
using Theorem \ref{thm:IZtheorem}. 

Using the group structure of \(G\) we can often reduce these calculations to smaller posets.

\begin{lemma}\label{lem:interval_reductions}
For \(G\) a group with subgroups \(K \subseteq H\) and \(L \trianglelefteq H\), 
there are isomorphisms  
    \[\begin{array}{rcl}
    \ext^n_{IA(\Sub_G)}(S_H, S_K) & \cong & \ext^n_{IA(\Sub_H)}(S_H, S_K)
    \\
    \ext^n_{IA(\Sub_H)}(S_H, S_L) & \cong & \ext^n_{IA(\Sub_{H/L})}(S_{H/L}, S_{L/L}).
    \end{array}
    \]    
\end{lemma}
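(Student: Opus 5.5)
The plan is to reduce both isomorphisms to Theorem \ref{thm:IZtheorem}. That result computes \(\ext^n_{IA(\mcP)}(S_x,S_y)\) purely from the relative position of \(y\) and \(x\) in \(\mcP\), and, when \(y<x\), from the interval \(I(x,y)\). Concretely, the Ext groups depend only on whether \(x=y\), whether \(y\not\leq x\), whether \(y<x\) with empty interval, or whether \(y<x\) with non-empty interval, in which case they are \(\widetilde{H}^{n-2}(\abs{I(x,y)},\bQ)\). It therefore suffices to exhibit, in each case, an isomorphism of posets between the relevant intervals. We also check that the order relation between the two elements is the same on both sides.

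For the first isomorphism, \(\Sub_H\) is the sub-poset of \(\Sub_G\) consisting of subgroups of \(H\). Since \(K\subseteq H\), the elements \(H\) and \(K\) lie in both posets and satisfy the same relation, namely \(K=H\) or \(K<H\). When \(K<H\), the interval \(I(H,K)\) computed in \(\Sub_G\) is \(\{Z\le G\mid K<Z<H\}\). Every such \(Z\) is a subgroup of \(H\), so this set is literally equal, as a poset, to the interval computed in \(\Sub_H\). The emptiness or non-emptiness of the interval and its geometric realisation therefore agree, and Theorem \ref{thm:IZtheorem} gives the same Ext groups in every degree.

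For the second isomorphism, we use the correspondence theorem. Because \(L\trianglelefteq H\), the map \(Z\mapsto Z/L\) is an order isomorphism from \(\{Z\mid L\le Z\le H\}\) to \(\Sub_{H/L}\). It sends \(H\mapsto H/L\) and \(L\mapsto L/L\), and it restricts to an isomorphism of intervals \(I(H,L)\cong I(H/L,L/L)\). In particular, \(L=H\) if and only if \(H/L=L/L\), and the interval on one side is empty exactly when the other is. Applying Theorem \ref{thm:IZtheorem} case by case then yields the stated isomorphism.

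There is no serious obstacle here. The only point needing care is that Theorem \ref{thm:IZtheorem} really is local: the Ext groups between simples depend only on the interval, and not on the ambient poset. Cases (1)–(4) of that theorem make this explicit, so no further argument about the rest of \(\Sub_G\) is needed.
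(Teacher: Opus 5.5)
Your proof is correct and matches the paper's argument: both reduce to Theorem \ref{thm:IZtheorem}, using that the interval $I(H,K)$ is literally the same poset in $\Sub_G$ and in $\Sub_H$, and that the correspondence theorem identifies $I(H,L)$ with $I(H/L,L/L)$. You also check explicitly that the order relation between the two elements is preserved, including the degenerate cases ($K=H$, or an empty interval). The paper leaves this check implicit.
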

\begin{proof}
    The first isomorphism holds as the interval in \(\Sub_G\) between \(H\) and \(K\) is 
    isomorphic to the interval in \(\Sub_H\) between \(H\) and \(K\). 
    The second is that the interval in \(\Sub_H\) between \(H\) and \(L\)
    is isomorphic to the interval in \(\Sub_{H/L}\) between \(H/L\) and \(L/L\), by the correspondence
    theorem for quotient groups.
\end{proof}

To understand geometric realisations of intervals we begin by recalling a result from simplicial homotopy theory; see for example \cite[1.3]{QUILLEN_posets}.

\begin{proposition}\label{prop:simpl_homot}
    If \(f_0,f_1 \colon \mcP \longrightarrow \mcQ\) are maps of posets such that for all \(x\in \mcP\), \(f_0(x)\leq f_1(x)\), then \(\abs{f_0}\) is homotopic to \(\abs{f_1}\), where \(\abs{\cdot}\) denotes geometric realisation.
\end{proposition}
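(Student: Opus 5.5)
The standard route is to build an explicit homotopy between \(\abs{f_0}\) and \(\abs{f_1}\) from a single poset map out of a product. Encode the hypothesis \(f_0(x)\leq f_1(x)\) as one map of posets
\[
F \colon \mcP \times [1] \longrightarrow \mcQ, \qquad F(x,0)=f_0(x), \quad F(x,1)=f_1(x),
\]
where \([1]=\{0<1\}\) is the two-element totally ordered set and \(\mcP\times[1]\) has the product order. Then realise \(F\) and identify the realisation of the domain with \(\abs{\mcP}\times[0,1]\).

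The first step is to check that \(F\) is order preserving. There are three kinds of relation in \(\mcP\times[1]\):
\begin{itemize}
\item \((x,i)\leq(x',i)\) with \(x\leq x'\): this follows because \(f_0\) and \(f_1\) are themselves order preserving.
\item \((x,0)\leq(x',1)\) with \(x\leq x'\): we need \(f_0(x)\leq f_1(x')\). This holds by the chain \(f_0(x)\leq f_0(x')\leq f_1(x')\), which is where the hypothesis is used.
\end{itemize}
Since posets are categories and \(F\) is a functor, applying the nerve gives a simplicial map \(NF\colon N(\mcP\times[1])\to N\mcQ\). Restricting along the inclusions \(\mcP\times\{i\}\hookrightarrow\mcP\times[1]\) recovers \(Nf_0\) and \(Nf_1\).

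The second step is the identification \(N(\mcP\times[1])\cong N\mcP\times N[1]=N\mcP\times\Delta^1\). This holds because the nerve preserves products: an \(n\)-simplex of a product category is a pair of composable strings. The two ends \(N\mcP\times\{0\}\) and \(N\mcP\times\{1\}\) correspond to the two inclusions above.

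The third step is to pass to geometric realisations. Use that realisation commutes with finite products, \(\abs{N\mcP\times\Delta^1}\cong\abs{N\mcP}\times\abs{\Delta^1}=\abs{\mcP}\times[0,1]\). For finite posets, as in the application here, these are finite simplicial sets, so no compactly generated subtleties arise. In general one works in compactly generated spaces, where the statement is standard (Milnor). The composite
\[
\abs{\mcP}\times[0,1]\cong\abs{N(\mcP\times[1])}\xrightarrow{\abs{NF}}\abs{\mcQ}
\]
restricts to \(\abs{f_0}\) at \(0\) and to \(\abs{f_1}\) at \(1\). It is therefore the required homotopy.

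The only nontrivial input is that realisation preserves the product with \(\Delta^1\). I would cite this rather than reprove it (see Quillen \cite[1.3]{QUILLEN_posets}), since all posets considered in the paper are finite.
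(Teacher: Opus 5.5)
Your proposal is correct and follows the same route as the paper. The paper also packages the hypothesis as a single poset map \(h\colon \{0\to 1\}\times\mcP\to\mcQ\), \(h(i,x)=f_i(x)\), and realises it to obtain the homotopy \([0,1]\times\abs{\mcP}\to\abs{\mcQ}\); you additionally spell out the order-preservation check and the identification \(\abs{N(\mcP\times[1])}\cong\abs{\mcP}\times[0,1]\), which the paper leaves implicit.
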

\begin{proof}
    We can define a map of posets \(h\colon \{0 \rightarrow 1\} \times \mcP \longrightarrow \mcQ\) by \(h(i,x)= f_i(x)\). After geometric realisation this gives a map \[|h|\colon[0,1]\times |\mcP| \longrightarrow |\mcQ|,\]
    which is a homotopy from \(\abs{f_0}\) to \(\abs{f_1}\).
\end{proof}

We will use this result to understand when the geometric realisation of the interval from Theorem \ref{thm:IZtheorem} in the case \(\mcP =\Sub_G\) is contractible. Recall that the \emph{Frattini subgroup} \(\Phi(H)\) of a group \(H\) is the intersection of all the maximal subgroups of \(H\).

\begin{theorem}\label{thm:Frattini_interval}
    Consider the poset \(\Sub_G\) and suppose \(K< H\). If \(I(H,K)\) is non-empty and \(\Phi(H)\) is not a subgroup of \(K\), then \(\abs{I(H,K)}\) is contractible. 
\end{theorem}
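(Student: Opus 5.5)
We will use Proposition \ref{prop:simpl_homot} with a two-step homotopy through the subgroup \(K\Phi(H)\). The point is that \(\Phi(H)\) is normal (indeed characteristic) in \(H\), so \(K\Phi(H)\) is a subgroup of \(H\) containing \(K\).

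First we show \(K\Phi(H)\in I(H,K)\). Since \(\Phi(H)\not\subseteq K\), we have \(K\subsetneq K\Phi(H)\). Next suppose \(K\Phi(H)=H\). Because \(I(H,K)\) is non-empty, \(K\) is a proper subgroup of \(H\), so \(K\subseteq M\) for some maximal subgroup \(M\) of \(H\). Then \(K\Phi(H)\subseteq M\neq H\), a contradiction. This is the standard non-generator property of the Frattini subgroup. Hence \(K<K\Phi(H)<H\), and we write \(z_0=K\Phi(H)\).

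Next we build poset maps \(I(H,K)\to I(H,K)\). Define \(f(J)=J\Phi(H)\). Clearly \(f\) is order preserving, and \(J\le f(J)\). We must check that \(f\) lands in the interval. We have \(K<J\le J\Phi(H)\), and \(J\Phi(H)\neq H\) by the same maximal subgroup argument, since \(J\) is proper in \(H\). So \(f\colon I(H,K)\to I(H,K)\) is well defined. By Proposition \ref{prop:simpl_homot}, \(\abs{f}\simeq \Id\).

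Now let \(c\) be the constant map at \(z_0=K\Phi(H)\). For every \(J\in I(H,K)\) we have \(K\subseteq J\), so \(c(J)=K\Phi(H)\le J\Phi(H)=f(J)\). Proposition \ref{prop:simpl_homot} again gives \(\abs{c}\simeq\abs{f}\). Therefore the identity of \(\abs{I(H,K)}\) is homotopic to a constant map, and \(\abs{I(H,K)}\) is contractible.

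The only genuinely non-formal step is verifying that \(J\Phi(H)\) stays proper in \(H\) for every proper \(J\). This is exactly where the hypothesis that \(\Phi(H)\) is the intersection of the maximal subgroups enters, and where finiteness of \(H\) guarantees that maximal subgroups containing \(J\) exist. Everything else is bookkeeping with the order relations.
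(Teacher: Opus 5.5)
Your proof is correct and matches the paper's: the paper applies Proposition~\ref{prop:simpl_homot} twice to the chain \(\Id \le f_1 \ge f_2\), where \(f_1(L)\) is the join of \(\Phi(H)K\) and \(L\) and \(f_2\) is constant at \(\Phi(H)K\). Since \(K\le L\), this \(f_1\) is exactly your \(J\mapsto J\Phi(H)\), and the paper checks that it lands in the interval by the same argument with a maximal subgroup containing \(J\).
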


\begin{proof}
    We will construct three maps of posets 
    \[
    f_0,f_1,f_2\colon I(H,K) \longrightarrow I(H,K) 
    \]
    such that for all \(L\in I(H,K)\), \(f_0(L)\leq f_1(L)\geq f_2(L).\)
Set \(f_0=\Id_{I(H,K)}\) and \(f_2\) the constant function at the subgroup \(\Phi(H) K\). 

Since \(\Phi H\not\leq K\), \(\Phi(H) K\) strictly contains $K$. If \(K'\) is a maximal proper subgroup of \(H\) that contains \(K\), then by construction, it necessarily also contains \(\Phi(H)\), and hence also \(\Phi(H)K\). Thus \(\Phi(H)K\) is in the interval \(I(H,K)\). Define 
\[
    f_1(L)=(\Phi(H)K)\cup L.
\]
Notice that this is indeed a map of posets \(I(H,K) \to I(H,K)\), since \(\Phi(H) K\) is in all maximal proper subgroups of \(H\) that contain \(K\). 

Observe that by definition, \(\Phi(H) K\leq (\Phi(H) K)\cup L\) for all $L\in I(H,K)$.  Hence $f_0\leq f_1\geq f_2$. Now we use Proposition \ref{prop:simpl_homot} twice to get a homotopy from the identity \(\abs{f_0}\) on \(\abs{I(H,K)}\) to the constant map \(\abs{f_2}\). Thus \(\abs{I(H,K)}\) is contractible.
\end{proof}

Lemma \ref{lem:interval_reductions} and Theorem \ref{thm:Frattini_interval} imply that the only intervals that contribute to the global dimension of rational presheaves on \(\Sub_G\) are of the form \(I(H,K)\), where \(\Phi H \leq K < H\).

\begin{lemma}\label{lem:interval_sphere}
    If \(G=(C_p)^{\times k}\), then the global dimension of rational presheaves on \(\Sub_G\) is \(k\) and it is realised by \(I(G, e)\). 
\end{lemma}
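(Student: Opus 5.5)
Combining the Igusa--Zacharia computation (Theorem \ref{thm:IZtheorem}) with the upper bound of Lemma \ref{rem:presheaves_upperBound} settles the claim. \(\Sub_G\) for \(G=(C_p)^{\times k}\) is the lattice of subspaces of \(V=\F_p^k\), ordered by inclusion. Its longest chain is a complete flag \(0\subset V_1\subset\cdots\subset V_k=V\), so the height is \(k\). Hence the global dimension is at most \(k\), and it suffices to show that \(\ext^k_{IA(\Sub_G)}(S_G,S_e)\neq 0\).

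For \(k=1\) the interval \(I(G,e)\) is empty, and part (2) of Theorem \ref{thm:IZtheorem} gives \(\ext^1(S_G,S_e)=\bQ\). For \(k\geq 2\), part (1) gives
\[
\ext^k_{IA(\Sub_G)}(S_G,S_e)\cong \widetilde{H}^{k-2}(\abs{I(G,e)},\bQ),
\]
where \(I(G,e)\) is the poset of proper nonzero subspaces of \(\F_p^k\). Its nerve is the Tits building of \(GL_k(\F_p)\). By the Solomon--Tits theorem, this realisation has the homotopy type of a wedge of \(p^{\binom{k}{2}}\) spheres of dimension \(k-2\). Since the number of spheres is nonzero, \(\widetilde H^{k-2}\) is a nonzero rational vector space. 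Therefore \(\ext^k\neq 0\), which shows both that the global dimension is exactly \(k\) and that it is realised by \(I(G,e)\).

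The main obstacle is the nonvanishing of \(\widetilde H^{k-2}(\abs{I(G,e)},\bQ)\) if we do not simply cite Solomon--Tits. A self-contained route is to compute the reduced Euler characteristic of \(I(G,e)\) via the Möbius function of the subspace lattice. Philip Hall's theorem gives \(\mu(0,V)=\tilde\chi(\abs{I(0,V)})\). The standard \(q\)-binomial computation gives \(\mu(0,V)=(-1)^k p^{\binom{k}{2}}\neq 0\).

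The Euler characteristic alone only shows that some reduced cohomology is nonzero, so we also need concentration in degree \(k-2\). For this we use that \(\abs{I(G,e)}\) is \((k-3)\)-connected, which follows from Cohen--Macaulayness (shellability) of modular or semimodular lattices. Dimension \(k-2\) then forces the reduced cohomology to live only in degree \(k-2\), and its rank equals \(\abs{\mu}=p^{\binom{k}{2}}\). We would cite the standard references for these facts rather than reprove them.
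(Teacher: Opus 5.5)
Your argument is correct, but it reaches the lower bound by a different route from the paper. You compute \(\ext^k_{IA(\Sub_G)}(S_G,S_e)\cong\widetilde{H}^{k-2}(\abs{I(G,e)};\bQ)\) directly. You do this by recognising \(I(G,e)\) as the Tits building of \(GL_k(\F_p)\) and invoking Solomon--Tits, or alternatively Hall's theorem together with shellability of modular lattices.

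The paper never computes the topology of \(I(G,e)\) inside \(\Sub_G\). It instead uses the Boolean sub-poset \((\Sub_{C_p})^{\times k}\) of coordinate subspaces. The retraction \(r(L)=\Span\{\vec{e}_i\mid \vec{e}_i\in L\}\) is right adjoint to the inclusion \(i\). Hence \(r^*\) is exact and fully faithful with exact left adjoint \(i^*\). The right-handed version of Lemma \ref{lem:adjunction_and_global_dim} then gives \(\gldim\PreSh_\bQ((\Sub_{C_p})^{\times k})\le\gldim\PreSh_\bQ(\Sub_G)\). In the Boolean lattice the top-to-bottom interval realises to the boundary of a \((k-1)\)-simplex, i.e.\ \(S^{k-2}\).

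However, \(r^*\) sends the bottom simple not to \(S_e\) but to the presheaf supported on all subspaces containing no standard basis vector. So this step yields only \(\gldim\ge k\). The paper then identifies the realising pair by elimination: by Lemma \ref{lem:interval_reductions}, any other pair \((H,K)\) reduces to \(\Sub_{H/K}\), whose height is less than \(k\), so it cannot carry \(\ext^k\).

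The trade-off is as follows. The paper's proof is elementary and internal, needing only that a simplex boundary is a sphere plus adjunction machinery it had already built. Yours is shorter and more informative: it shows directly, with no elimination step, that \(\ext^k(S_G,S_e)\) has dimension \(p^{\binom{k}{2}}\). The cost is importing a substantial external theorem.
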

\begin{proof}
  The height of the poset \(\Sub_G\) is \(k\), so the global dimension of rational presheaves on \(\Sub_G\) is less than or equal to \(k\). We will show it is \(k\). 

Consider the inclusion \(i\colon (\Sub_{C_p})^{\times k} \rightarrow \Sub_G\). This has a retraction: if we let \(\vec{e}_i\in C_p^{\times k}\cong \F_p^{k}\) be the $i$\textsuperscript{th} standard basis vector, the retraction \(r\) is given by 
\[
L\mapsto \max\{ K \leq L\mid K\in (\Sub_{C_p})^{\times k}\}=\Span\{ \vec{e}_i\mid \vec{e}_i\in L\}.
\]
In other words we assign to \(L\) the maximal subgroup of \(L\) that is a product of subgroups of the $C_p$ factors.  At the level of posets we have an adjunction where \(i\) is the left adjoint of \(r\).

The maps \(i\) and \(r\) induce restriction functors 
\[
\begin{array}{rcl}
i^* \colon \PreSh_\bQ (\Sub_G) 
& \longrightarrow &
\PreSh_\bQ ((\Sub_{C_p})^{\times k}) \\
r^* \colon \PreSh_\bQ ((\Sub_{C_p})^{\times k}) 
& \longrightarrow &
\PreSh_\bQ (\Sub_G) 
\end{array}
\]
that form an adjoint pair. 

The functor \(r^*\) is fully faithful because
\(i^* \circ r^* = \Id\) and it is exact as it also has a right adjoint given by right Kan extension. Thus by the right handed version of 
Lemma \ref{lem:adjunction_and_global_dim},
the global dimension of 
\(\PreSh_\bQ ((\Sub_{C_p})^{\times k}) \) is less or equal to  the global dimension of 
\(\PreSh_\bQ (\Sub_G) \). 

We will now show that the global dimension of 
\(\PreSh_\bQ ((\Sub_{C_p})^{\times k}) \) is \(k\).
Since the interval between the maximal and minimal elements in the poset \((\Sub_{C_p})^{\times k}\) is isomorphic to the poset of non-empty, proper subsets of \(k\)-element set, the geometric realisation of that interval is homotopy equivalent to the sphere of dimension \(k-2\). Thus by Theorem \ref{thm:IZtheorem} the global dimension of rational presheaves on this poset is at least \(k\). Notice that the global dimension also at most \(k\), since that is the height of the poset \((\Sub_{C_p})^{\times k}\).  Hence the global dimension is \(k\).

Finally, the global dimension is realised by the interval \(I(G,e)\) in \(\Sub_G\) contributing a non-trivial Ext group in dimension \(k\). 
By Lemma \ref{lem:interval_reductions}, any other interval \(I(H,K)\) would be isomorphic to an interval \(I(H/K, e)\) in a poset \(\Sub_{H/K}\) of a strictly lower height than \(k\) and thus can contribute a non-trivial Ext group only up to the dimension equal to the height of the poset \(\Sub_{H/K}\); see Remark \ref{rem:presheaves_upperBound}. 
\end{proof}

\begin{corollary}\label{cor:p-group}
    For an abelian \(p\)-group \(G\) the global dimension of rational presheaves on \(\Sub_G\) is the number of prime-power cyclic factors of \(G\).
\end{corollary}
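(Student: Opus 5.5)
Write \(G\cong C_{p^{a_1}}\times\cdots\times C_{p^{a_k}}\) with all \(a_i\geq 1\), so that \(k\) is the number of prime-power cyclic factors of \(G\). We aim to show that the global dimension of \(\PreSh_\bQ(\Sub_G)\) is exactly \(k\).

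\emph{Reduction to simples.} By the proposition reducing global dimension to \(\ext\) between simples, together with Theorem \ref{thm:IZtheorem}, the global dimension is the largest \(n\) for which one of the following holds:
\begin{itemize}
  \item some pair \(K<H\) has \(\widetilde H^{n-2}(\abs{I(H,K)},\bQ)\neq 0\);
  \item \(n=1\) and there is a covering relation in \(\Sub_G\);
  \item \(n=0\).
\end{itemize}
Write \(\ext^n(S_H,S_K)\) for \(\ext^n_{IA(\Sub_G)}(S_H,S_K)\).

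\emph{Lower bound.} Let \(\Phi(G)\) be the Frattini subgroup, which here is \(G^p\). Then \(G/\Phi(G)\cong (C_p)^{\times k}\). By Lemma \ref{lem:interval_reductions} (first with \(L=\Phi(G)\trianglelefteq G\)),
\[
\ext^n(S_G,S_{\Phi(G)})\cong \ext^n_{IA(\Sub_{G/\Phi(G)})}(S_{G/\Phi(G)},S_{e}).
\]
Lemma \ref{lem:interval_sphere} says the right-hand side is nonzero for \(n=k\), since \(\abs{I}\) is a \((k-2)\)-sphere. So the global dimension is at least \(k\).

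\emph{Upper bound.} Take any \(K<H\) with \(\ext^n(S_H,S_K)\neq 0\) and \(n\geq 2\). Since \(n\geq 2\), Theorem \ref{thm:IZtheorem} forces \(I(H,K)\) to be non-empty with \(\widetilde H^{n-2}(\abs{I(H,K)})\neq 0\).

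\begin{enumerate}
  \item By Theorem \ref{thm:Frattini_interval}, we must then have \(\Phi(H)\leq K\); otherwise the interval is contractible.
  \item Using both isomorphisms of Lemma \ref{lem:interval_reductions} (valid since \(G\) is abelian, so every subgroup is normal), \(\ext^n(S_H,S_K)\) equals \(\ext^n\) for the interval \(I(H/K,e)\) in \(\Sub_{H/K}\).
  \item Since \(\Phi(H)\leq K\), the quotient \(H/K\) is a quotient of \(H/\Phi(H)\), hence elementary abelian, say \((C_p)^{\times m}\).
  \item By Lemma \ref{lem:interval_sphere} (or just the height bound, Lemma \ref{rem:presheaves_upperBound}), we get \(n\leq m\).
  \item Finally, \(m\) is the rank of \(H/K\). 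This is at most the rank of \(H\), i.e.\ the minimal number of generators of \(H\), which is \(\dim_{\F_p} H/\Phi(H)\). That in turn is at most the rank of \(G\), namely \(k\).
\end{enumerate}

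For the last inequality we use two standard facts about finite abelian \(p\)-groups: a subgroup \(H\leq G\) has at most as many cyclic factors as \(G\) (compare \(p\)-torsion \(\Omega_1\): \(\Omega_1(H)\leq\Omega_1(G)\), whose \(\F_p\)-dimensions are the ranks), and quotients do not increase the number of generators.

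Cases \(n\leq 1\) are trivially at most \(k\) when \(k\geq 1\). The case \(G\) trivial gives \(0=k\).

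\emph{Main obstacle.} The main point is the rank monotonicity \(\operatorname{rank}(H/K)\leq\operatorname{rank}(G)\), combined with making sure the Frattini reduction is applied to \(H\) rather than to \(G\). Both reduce to the \(\Omega_1\) argument above.
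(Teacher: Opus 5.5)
Your proof is correct and follows essentially the paper's route. Theorem~\ref{thm:Frattini_interval} restricts attention to intervals with \(\Phi(H)\leq K<H\). Lemma~\ref{lem:interval_reductions} then reduces these to intervals \(I(H/K,e)\) with \(H/K\) elementary abelian of rank at most that of \(G\). Finally, \(I(G,\Phi G)\) realises the top degree via Lemma~\ref{lem:interval_sphere}. You are in fact more explicit than the paper about the upper bound, since you treat every \(K\supseteq\Phi(H)\) rather than only \(K=\Phi(H)\).

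One harmless inaccuracy: in \(\Sub_{(C_p)^{\times k}}\) the realisation of \(I((C_p)^{\times k},e)\) is the Tits building. This is a wedge of \(p^{\binom{k}{2}}\) spheres of dimension \(k-2\), not a single sphere; the single sphere occurs in the product poset \((\Sub_{C_p})^{\times k}\) used in the paper's proof of that lemma. Only the nonvanishing of reduced cohomology in degree \(k-2\) is needed, so your argument stands.
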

\begin{proof}
   Let \(G\cong C_{p^{a_1}}\times C_{p^{a_2}}\times \dots \times C_{p^{a_n}} \).  Then any subgroup \(H \leq G\) 
   is isomorphic to a group of the form \(C_{p^{b_1}}\times C_{p^{b_2}}\times \dots \times C_{p^{b_k}} \) where \(k\leq n\). 
By Miller \cite{Miller15}, the Frattini subgroup of \(H\) is 
\[\Phi (C_{p^{b_1}}\times C_{p^{b_2}}\times \dots \times C_{p^{b_k}} )= C_{p^{b_1-1}}\times C_{p^{b_2-1}}\times \dots \times C_{p^{b_k-1}}.\]
Thus the interval \(I(H,\Phi H)\) in \(\Sub_G\) is of the form 
\[I(C_{p^{b_1}}\times C_{p^{b_2}}\times \dots \times C_{p^{b_k}},C_{p^{b_1-1}}\times C_{p^{b_2-1}}\times \dots \times C_{p^{b_k-1}})\cong I((C_p)^{\times k}, e ),\]
where the last interval is taken in the poset \(\Sub_{(C_p)^{\times k}}\).

By Lemmas \ref{lem:interval_reductions} and \ref{lem:interval_sphere},  
the interval \(I(H, \Phi H)\), which is isomorphic to \(I((C_p)^{\times k}, e)\) in the poset \(\Sub_{(C_p)^{\times k}}\), 
realises the non-trivial Ext group in dimension \(k\).  Thus the global dimension of rational presheaves on \(\Sub_G\) is realised by the interval where the dimension of the non-trivial Ext is the highest, which is for \(H=G\). The global dimension is then equal to \(n\), which is the number of prime-power cyclic factors of \(G\).
\end{proof}

\begin{theorem}\label{thm:gldim_abelian_group}
    For \(G\) a finite abelian group, the global dimension of rational presheaves on \(\Sub_G\) is the number of prime-power cyclic factors of \(G\).
\end{theorem}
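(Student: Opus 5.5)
We reduce to the $p$-group case, Corollary \ref{cor:p-group}, by showing that intervals in $\Sub_G$ split as products over primes. Write $G=\prod_{p} G_p$ as the product of its Sylow subgroups. Since $G$ is finite abelian, every subgroup $L\le G$ decomposes uniquely as $L=\prod_p L_p$ with $L_p=L\cap G_p$. This gives an isomorphism of posets $\Sub_G\cong\prod_p \Sub_{G_p}$, and for $K<H$ an isomorphism of closed intervals $[K,H]\cong\prod_p [K_p,H_p]$. The number of prime-power cyclic factors of $G$ is $n=\sum_p n_p$, where $n_p$ is the number of cyclic factors of $G_p$. By Corollary \ref{cor:p-group}, $n_p$ is the global dimension of rational presheaves on $\Sub_{G_p}$.

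\textbf{Upper bound.} By Theorem \ref{thm:IZtheorem}, we need the vanishing of $\widetilde H^{m-2}(\abs{I(H,K)})$ for $m>n$. By Theorem \ref{thm:Frattini_interval}, only intervals with $\Phi(H)\le K$ can contribute. For abelian $H=\prod_p H_p$ we have $\Phi(H)=\prod_p\Phi(H_p)$. The interval $[K,H]$ is then a product of intervals $[K_p,H_p]$ with $\Phi(H_p)\le K_p$, and each of these is the subspace lattice of $H_p/K_p\cong \F_p^{k_p}$ with $k_p\le n_p$. Hence $[K,H]$ has height $\sum_p k_p\le n$. The interval is isomorphic to one in a poset of height at most $n$, so by Lemma \ref{lem:interval_reductions} and Lemma \ref{rem:presheaves_upperBound} its Ext groups vanish above degree $n$.

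One could instead use the bound via height of $\Sub_G$ directly, but that height is $\sum_p\sum_i a_i$ rather than $n$. We therefore run the Frattini reduction as above: any interval with nonzero Ext has, after quotienting, the form of the lattice of subgroups of an elementary abelian piece $\prod_p \F_p^{k_p}$. Concretely, $I(H,K)\cong I(H/K,e)$ in $\Sub_{H/K}$, and $H/K\cong\prod_p\F_p^{k_p}$ has $\Sub$-height $\sum k_p\le n$.

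\textbf{Lower bound.} Take $H=G$ and $K=\Phi(G)$, so that $G/\Phi(G)\cong\prod_p \F_p^{n_p}$. We need a nonzero $\ext^n(S_{G},S_{\Phi G})$. The cleanest route is a product retraction argument mirroring Lemma \ref{lem:interval_sphere}. Inside $\Sub_{\prod_p\F_p^{n_p}}\cong\prod_p\Sub_{\F_p^{n_p}}$, take the subposet $\prod_p(\Sub_{C_p})^{\times n_p}\cong\{0<1\}^{n}$, the Boolean lattice on $n$ atoms. It admits the retraction $r$ sending $L$ to the span of the coordinate vectors it contains, with $i\dashv r$. Exactly as in Lemma \ref{lem:interval_sphere}, $r^*$ is fully faithful and exact with a right adjoint, so the global dimension is at least that of presheaves on the Boolean lattice. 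The open interval between bottom and top of the Boolean lattice realises to $S^{n-2}$, so by Theorem \ref{thm:IZtheorem} that global dimension is $n$ (degenerate cases $n\le 1$ are handled by parts (2),(3)).

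Alternatively, one can argue topologically via the join formula $\abs{I((a,b),(a',b'))}\simeq \Sigma\,\abs{I(a,a')}*\abs{I(b,b')}$ for products of bounded posets (Quillen/Walker). Either way, $\gldim\PreSh_\bQ(\Sub_{G/\Phi G})\ge n$. Then $\ext^n_{IA(\Sub_G)}(S_G,S_{\Phi G})\cong\ext^n_{IA(\Sub_{G/\Phi G})}(S_{G/\Phi G},S_e)$ by Lemma \ref{lem:interval_reductions}, and this is nonzero by the retraction argument, which locates the nonvanishing at the top/bottom interval as in Lemma \ref{lem:interval_sphere}.

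\textbf{Main obstacle.} The hardest step is the upper bound bookkeeping: ensuring every contributing interval has height at most $n$, not merely at most the height of $\Sub_G$. This relies on the Frattini theorem together with the identification of $[\Phi(H),H]$ with a subspace lattice of dimension at most $n_p$ at each prime. We need $k_p\le n_p$, which holds because subgroups of an abelian $p$-group with $n_p$ cyclic factors have at most $n_p$ cyclic factors (as used in Corollary \ref{cor:p-group}).
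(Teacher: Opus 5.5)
Your proof is correct, but it follows a different route from the paper's.

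\textbf{The paper's route.} The paper reduces to the $p$-primary case algebraically. It uses $\Sub_G\cong\prod_p\Sub_{G_p}$, the isomorphism $IA(\mcP\times\mcQ)\cong IA(\mcP)\otimes_\bQ IA(\mcQ)$ (Spiegel--O'Donnell), and Auslander's additivity $\gldim(A\otimes_\bQ B)=\gldim A+\gldim B$. It then adds up the values $n_p$ from Corollary~\ref{cor:p-group}.

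\textbf{Your route.} You never separate the primes. You run the Frattini argument and the Boolean-retraction argument of Lemma~\ref{lem:interval_sphere} once, on the whole group.
\begin{itemize}
\item For the upper bound, every interval $I(H,K)$ with $\Phi(H)\le K$ is, up to isomorphism, an interval in $\Sub$ of $\prod_p\F_p^{k_p}$ with $\sum_p k_p\le n$.
\item For the lower bound, the Boolean lattice $\{0<1\}^n$ is a retract of $\Sub_{G/\Phi G}$.
\end{itemize}

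\textbf{What each buys.} The paper's proof is shorter and makes additivity over primes structural. However, it rests on two external results; Auslander's equality needs separability of the semisimple tops, which is automatic over $\bQ$. Your proof stays entirely within the paper's own tools (Theorems~\ref{thm:IZtheorem} and \ref{thm:Frattini_interval}, Lemmas~\ref{lem:interval_reductions} and \ref{rem:presheaves_upperBound}). It subsumes Corollary~\ref{cor:p-group} rather than using it; your citation of that corollary is never actually needed. It also yields Corollary~\ref{cor:abelianfrattini} directly as a by-product.

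\textbf{Two points of precision.}
\begin{itemize}
\item The adjoint of $r^*$ that does the work is its exact \emph{left} adjoint $i^*$. This is what makes $r^*$ preserve injectives, so that the right-handed version of Lemma~\ref{lem:adjunction_and_global_dim} applies. The right adjoint of $r^*$, a right Kan extension, is not exact in general. You inherited this slip from the paper's wording.
\item The retraction by itself only shows $\gldim\PreSh_\bQ(\Sub_{G/\Phi G})\ge n$. What pins the degree-$n$ class to the pair $(G/\Phi G,e)$ is the height count: every other pair $K<H$ in $\Sub_{G/\Phi G}$ has $H/K$ of total rank less than $n$. This carries over verbatim to several primes, or you can use your join-formula alternative instead.
\end{itemize}
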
 
\begin{proof}
The group \(G\) is a finite product of its maximal \(p\)-groups \(G_p\)
\[
G\cong \prod_p G_p.
\]
We use a series of simplifications so that we may work one prime at a time. 

Firstly, if \(H,K\) are groups of coprime order then \(\Sub_{H \times K} \cong \Sub_H \times \Sub_K \). Secondly, if \(\mcP\) and \(\mcQ\) are locally finite posets, then \(IA(\mcP) \otimes_{\bQ} IA(\mcQ) \cong IA(\mcP \times \mcQ) \), see \cite[Proposition 2.1.12]{SpiegelODonnell}. 
Thirdly, if \(A,B\) are finite dimensional algebras over \(\bQ\), then 
\[
\gldim(A \otimes_\bQ B)=\gldim(A)+\gldim(B);
\] 
see \cite{Auslander}.

By Corollary \ref{cor:p-group} the global dimension for the maximal \(p\)-subgroup \(G_p \leq G\) is the number of prime-power cyclic factors in \(G_p\), which is the number of \(p\)-cyclic factors in \(G\). 
Thus, the global dimension of rational presheaves on \(\Sub_G\) is the sum of the number of \(p\)-cyclic factors for all prime \(p|\abs{G}\), which is the number of cyclic factors of \(G\). 
\end{proof}

\begin{corollary}\label{cor:abelianfrattini}
    For \(G\) a finite abelian group, the global dimension of rational presheaves on \(\Sub_G\) is realised by the interval \(I(G, \Phi G)\). That is, the global dimension is given by the largest \(n\) such that 
    \[
    \ext^n_{IA(\Sub_G)}(S_G, S_{\Phi G}) \neq 0.
    \]    
\end{corollary}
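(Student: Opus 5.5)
The plan is to show two things. First, \(\ext^n_{IA(\Sub_G)}(S_G,S_{\Phi G})\neq 0\) for \(n\) equal to the number \(r\) of prime-power cyclic factors of \(G\). Second, this \(n=r\) is the global dimension, which is exactly Theorem \ref{thm:gldim_abelian_group}. So the content is to exhibit the nonvanishing Ext group in degree \(r\) at the pair \((G,\Phi G)\).

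First, decompose \(G\cong\prod_p G_p\) into Sylow subgroups. Since \(\Sub_G\cong\prod_p \Sub_{G_p}\), and a maximal subgroup of \(G\) is maximal in exactly one factor, we get \(\Phi G=\prod_p \Phi(G_p)\). Under the isomorphism \(IA(\Sub_G)\cong\bigotimes_p IA(\Sub_{G_p})\), the simple module \(S_x\) at a tuple \(x=(x_p)\) is the external tensor product \(\bigotimes_p S_{x_p}\). Then I would invoke the Künneth formula for Ext over tensor products of finite-dimensional algebras over a field:
\[
\ext^n_{A\otimes B}(M\otimes N, M'\otimes N')\cong\bigoplus_{i+j=n}\ext^i_A(M,M')\otimes\ext^j_B(N,N').
\]
This is the same input that underlies the additivity of global dimension cited from \cite{Auslander}. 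It reduces the claim to showing, for each \(p\), that \(\ext^{r_p}_{IA(\Sub_{G_p})}(S_{G_p},S_{\Phi G_p})\neq0\), where \(r_p\) is the number of cyclic factors of \(G_p\). Tensoring these nonzero rational vector spaces then gives a nonzero summand in degree \(\sum_p r_p=r\).

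For a single \(p\)-group \(G_p\cong\prod_{i=1}^{r_p} C_{p^{a_i}}\), Miller's formula, quoted in Corollary \ref{cor:p-group}, gives \(\Phi G_p=\prod C_{p^{a_i-1}}\). The interval \(I(G_p,\Phi G_p)\) is therefore isomorphic to \(I((C_p)^{r_p},e)\) by the correspondence theorem (Lemma \ref{lem:interval_reductions}). By Lemma \ref{lem:interval_sphere} this interval realises a nonzero Ext in degree \(r_p\). The small cases need separate handling: if \(r_p=0\) the factor is the trivial group and \(\ext^0(S_e,S_e)=\bQ\); if \(r_p=1\) the interval is empty and part (2) of Theorem \ref{thm:IZtheorem} gives \(\bQ\) in degree one.

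The main obstacle is justifying the Künneth step. I need to check that simples of the tensor algebra at product vertices are the tensor products of simples, which is immediate from the presheaf description. I also need the Künneth isomorphism itself, obtained by tensoring minimal projective resolutions over \(\bQ\) and using finite-dimensionality. An alternative that avoids algebra entirely is to work with the intervals directly. The open interval in a product poset \(\prod\mcP_p\) between \((x_p)\) and \((y_p)\) has realisation homotopy equivalent to the join of the suspended factor intervals, suitably desuspended. For spheres \(S^{r_p-2}\), this yields a sphere of dimension \(\sum r_p-2\), and so a nonzero \(\ext^r\) by Theorem \ref{thm:IZtheorem}. I would use the Künneth route and mention the join computation as a check.
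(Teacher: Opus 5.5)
Your proposal is correct and follows essentially the paper's route: split $G$ into its Sylow factors, use $\Phi G=\prod_p\Phi(G_p)$, and obtain $\ext^{r_p}_{IA(\Sub_{G_p})}(S_{G_p},S_{\Phi G_p})\neq 0$ from Miller's formula, Lemma~\ref{lem:interval_reductions} and Lemma~\ref{lem:interval_sphere}; then recombine via $IA(\mcP)\otimes_\bQ IA(\mcQ)\cong IA(\mcP\times\mcQ)$ and take the upper bound from Theorem~\ref{thm:gldim_abelian_group}. Your explicit K\"unneth formula for $\ext$ between external tensor products of simples is the step the paper compresses into ``follows from the proofs'', since additivity of global dimension alone does not identify the realising pair; in your optional join check, note that $\abs{I((C_p)^{r_p},e)}$ in the full lattice $\Sub_{(C_p)^{r_p}}$ is a wedge of $(r_p-2)$-spheres rather than a single sphere, which does not affect the conclusion.
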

\begin{proof}
    Miller \cite{Miller15} proves that \(\Phi(H \times K) \cong \Phi(H) \times \Phi(K)\) for any finite groups \(H,K\). 
    The result follows from that fact and the proofs of Corollary \ref{cor:p-group} and Theorem \ref{thm:gldim_abelian_group}.
\end{proof}

In view of the Abelian Sandwich Theorem, Theorem \ref{thm:sandwich}, and 
Theorem \ref{thm:gldim_abelian_group} we obtain the following key result.

\begin{corollary}\label{cor:gldim_coeffsyst}
 For \(G\) an abelian group, the global dimension of rational \(G\)-coefficient systems is the number of prime-power cyclic factors of \(G\).  
\end{corollary}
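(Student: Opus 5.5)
This follows by combining the Abelian Sandwich Theorem (Theorem \ref{thm:sandwich}) with Theorem \ref{thm:gldim_abelian_group}. The argument is a two-sided squeeze: a lower bound coming from presheaves on \(\Sub_G\), and an upper bound coming from \(G\)-objects in presheaves on \(\Sub_G\).

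\emph{Lower bound.} Proposition \ref{prop:quintlocalnoaction} shows that \(\underline{\phi}^*\colon \PreSh_\bQ(\Sub_G)\to\Coeff_\bQ^{\,G}\) is part of a quintessential localisation. Here \(\widetilde{\mcS}=\Sub_G\) for \(\mcS=\orb_G\), since \(G\) is abelian. Corollary \ref{cor:lower_bound_coeff} then gives
\[
\gldim \Coeff_\bQ^{\,G}\ \ge\ \gldim \PreSh_\bQ(\Sub_G).
\]
Before applying it, I need \(\Coeff_\bQ^{\,G}\) to have enough projectives. This is supplied by the surjection \eqref{eqn:bigprojection} from sums of the projectives \(\underline{L}_H(\mM(G/H))\).

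\emph{Upper bound.} I view a coefficient system as an object of \(\PreSh_\bQ(\Sub_G)[G]\) whose value at \(H\) is \(H\)-fixed. The inclusion into \(\PreSh_\bQ(\Sub_G)[G]\) has both adjoints, namely \(H\)-orbits and \(H\)-fixed points at place \(H\). I must check three things about these adjoints:
\begin{enumerate}
\item Each is a functor of presheaves. A restriction map from \(H\) to \(K\subseteq H\) sends \(H\)-fixed points to \(K\)-fixed points, and \(H\)-orbits map to \(K\)-orbits after composing with the natural isomorphism \(A^H\cong A_H\). Both constructions rely on \(G\) being abelian, so that \(H\) is normal and the actions are compatible.
\item Each is exact, because rationally taking fixed points by a finite group is exact.
\item The two adjoints are naturally isomorphic, by the rational isomorphism \(A^H\to A\to A_H\).
\end{enumerate}
The inclusion is therefore a quintessential subcategory, and Corollary \ref{cor:quintlocal_gldim} gives
\[
\gldim \Coeff_\bQ^{\,G}\ \le\ \gldim \PreSh_\bQ(\Sub_G)[G].
\]
Theorem \ref{thm:equiv_pres=pres} identifies the right-hand side with \(\gldim \PreSh_\bQ(\Sub_G)\), using semisimplicity of \(\bQ[G]\) to split \(\ext\) between simples \(S_H\otimes V\).

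\emph{Conclusion.} The two bounds agree, so \(\gldim\Coeff_\bQ^{\,G}=\gldim\PreSh_\bQ(\Sub_G)\). Lemma \ref{lem:presheafincidence} identifies \(\PreSh_\bQ(\Sub_G)\) with \(IA(\Sub_G)\)-modules. Theorem \ref{thm:gldim_abelian_group} then computes this global dimension as the number of prime-power cyclic factors of \(G\).

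The main obstacle is the upper-bound step, specifically checking that the fixed-point functor really lands in coefficient systems with the correct restriction maps and is a two-sided adjoint. The remaining steps are direct citations of earlier results. An alternative for the upper bound would be the support-induction of Lemma \ref{lem:gdupperbound}, but that only yields the height of \(\Sub_G\), which is too weak (compare Example \ref{ex:cyclicpower}). So the sandwich is the route I would commit to.
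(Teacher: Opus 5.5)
Your overall route is the paper's: the Abelian Sandwich Theorem (Theorem \ref{thm:sandwich}), followed by Theorems \ref{thm:equiv_pres=pres} and \ref{thm:gldim_abelian_group}. Your lower bound via \(\underline{\phi}^*\) is fine.

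\textbf{The gap.} Your upper-bound step rests on a false claim, and the paper makes the same claim in its remark just before Theorem \ref{thm:sandwich}. Objectwise \(H\)-orbits is not the left adjoint of the inclusion \(i\colon\Coeff_\bQ^{\,G}\to\PreSh_\bQ(\Sub_G)[G]\). The two adjoints are not isomorphic, and \(i\) is not quintessential. Your own patch shows the symptom. For \(K\subseteq H\), restriction does not induce a map \(F(H)_H\to F(K)_K\). Transporting along \(A^H\cong A_H\) merely produces a copy of the fixed-point functor, and that copy lacks the left-adjoint universal property.

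Concretely, let \(G=C_2\), and let \(F\) have \(F(C_2)=F(e)=\bQ_\sigma\) (the sign representation) with identity restriction. For any coefficient system \(M\), a map \(F\to iM\) vanishes at \(C_2\), because \(M(C_2)\) has trivial action. It therefore vanishes at \(e\) too, because the restriction of \(F\) is the identity. So the true left adjoint sends \(F\) to \(0\). Objectwise orbits (or fixed points), by contrast, give the nonzero object \((0\to\bQ_\sigma)\). The left adjoint is not even exact. It fixes the subobject \(F'=(0\to\bQ_\sigma)\subseteq F\), which already lies in \(\Coeff_\bQ^{\,G}\), but it kills \(F\). So Corollary \ref{cor:quintlocal_gldim} cannot be invoked for \(i\).

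\textbf{The repair.} The inequality you want survives, because quintessence is not needed. The functor \(F\mapsto\big(H\mapsto F(H)^H\big)\) is a genuine right adjoint to \(i\). A \(G\)-map out of \(iM\) lands in \(H\)-fixed vectors at \(H\), and the equivariant restriction sends \(H\)-fixed vectors to \(H\)-fixed, hence \(K\)-fixed, vectors. This functor is exact, since rational fixed points are exact and exactness of presheaves is objectwise. Also, \(i\) is fully faithful and exact, since \(\Coeff_\bQ^{\,G}\) is a full subcategory closed under subobjects and quotients. Both categories have enough projectives. Lemma \ref{lem:adjunction_and_global_dim}, in exactly its stated form (only an exact right adjoint is required), then gives \(\ext^*_{\PreSh_\bQ(\Sub_G)[G]}(iC,iC')\cong\ext^*_{\Coeff_\bQ^{\,G}}(C,C')\). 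Hence \(\gldim\Coeff_\bQ^{\,G}\le\gldim\PreSh_\bQ(\Sub_G)[G]\).

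So replace the appeal to Corollary \ref{cor:quintlocal_gldim} in your upper bound with Lemma \ref{lem:adjunction_and_global_dim}, and drop the orbit claims in your items (1) and (3). The rest of your argument then goes through unchanged.
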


Now suppose \(\cO\) is a disk-like transfer system for an abelian group \(G\). We want to compute the global dimension of the category of rational \(\cO\)-Mackey functors.  There are two complications in this case which did not exist for rational presheaves on \(\Sub_H\), both related to the poset structure of \(\Sub_{\langle H \rangle}^{\cO}\). For an abelian group \(G\), there is one maximal element in \(\Sub_{\langle H \rangle}^{\cO}\), namely \(H\), but this sub-poset is no longer closed under meets and may have many incomparable minimal elements. This has two consequences. Firstly, we cannot separate different primes for \(H\) since \(\Sub_{\langle H \rangle}^{\cO}\) might not be a product of posets for these primes). Secondly our calculations of global dimension in this section depended on \(G\) and the Frattini subgroup of \(G\). In the case of \(\Sub_{\langle H \rangle}^{\cO}\), the Frattini subgroup of \(H\) may or may not be in \(\Sub_{\langle H \rangle}^{\cO}\).

\begin{definition}\label{def:n_h}
    Suppose \([H]^\cO\) is the inseparability class of \(H\) for a disk-like transfer system \(\cO\) for an abelian group \(G\). Define \(\dim ([H]^\cO)\) to be the maximum number of the prime-power  cyclic factors of \(L/K\) where \(L\geq K\) and both \(L,K\) run over the subgroups in \(\Sub_{\langle H \rangle}^\cO\).
\end{definition}
Notice that the set over which we take the maximum can be chosen to be much smaller. In fact, \(\dim ([H]^\cO)\) is the maximum  number of prime-power cyclic factors of \(H/K\) where \(K\) runs over the minimal elements in \(\Sub_{\langle H \rangle}^\cO\).
However, the given definition of \(\dim ([H]^\cO)\) will make it easy to deduce monotonicity of the global dimensions in the poset of disk-like transfer systems in Section \ref{sec:monotonicity}.

\begin{proposition}\label{prop:gl_dim_cloud}
    Let \(G\) be an abelian group and \(\cO\) a disk-like transfer system for \(G\). Then the global dimension of rational presheaves on \(\Sub_{\langle H \rangle}^{\cO}\) is \(\dim ([H]^\cO)\).
\end{proposition}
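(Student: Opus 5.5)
The plan is to compute the global dimension directly from Ext groups between simple modules, using Theorem \ref{thm:IZtheorem}. Write \(\mcP=\Sub_{\langle H\rangle}^{\cO}\). By the proposition following Lemma \ref{lem:presheafincidence}, \(\gldim \PreSh_\bQ(\mcP)\) is the largest \(n\) such that \(\ext^n_{IA(\mcP)}(S_L,S_K)\neq 0\) for some \(L,K\in\mcP\). Theorem \ref{thm:IZtheorem} expresses each such group purely in terms of the relation between \(L\) and \(K\) and the interval \(I_\mcP(L,K)\). So everything reduces to comparing intervals in \(\mcP\) with intervals in \(\Sub_G\).

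\emph{Step 1 (convexity).} For \(K\leq L\) in \(\mcP\) we have \(I_\mcP(L,K)=I_{\Sub_G}(L,K)\). Since \(G\) is abelian, \(L\subseteq H\). If \(K\subseteq M\subseteq L\), then \(K\subseteq M\subseteq H\), so Remark \ref{rmk:relfamily} gives \(M\in[H]^\cO\), hence \(M\in\mcP\). Combining this with Theorem \ref{thm:IZtheorem} (all four cases depend only on the interval) and Lemma \ref{lem:interval_reductions} (\(K\) is normal in \(L\)), we get
\[
\ext^n_{IA(\mcP)}(S_L,S_K)\cong \ext^n_{IA(\Sub_G)}(S_L,S_K)\cong\ext^n_{IA(\Sub_{L/K})}(S_{L/K},S_{K/K}).
\]

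\emph{Step 2 (upper bound).} If \(\ext^n_{IA(\mcP)}(S_L,S_K)\neq0\), then \(K\leq L\) by case (4) of Theorem \ref{thm:IZtheorem}. By Step 1 the group is a nonzero Ext for presheaves on \(\Sub_{L/K}\). Theorem \ref{thm:gldim_abelian_group} then gives that \(n\) is at most the number of prime-power cyclic factors of \(L/K\), which is at most \(\dim([H]^\cO)\).

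\emph{Step 3 (lower bound).} Choose \(K\leq L\) in \(\mcP\) such that \(L/K\) has \(d=\dim([H]^\cO)\) prime-power cyclic factors. By Corollary \ref{cor:abelianfrattini} applied to the abelian group \(L/K\), we have \(\ext^d_{IA(\Sub_{L/K})}(S_{L/K},S_{\Phi(L/K)})\neq0\). Write \(\Phi(L/K)=K'/K\) with \(K\subseteq K'\subseteq L\); by Step 1, \(K'\in\mcP\). By the correspondence theorem, the interval from \(K'\) to \(L\) in \(\Sub_G\), which is also the interval in \(\mcP\), is isomorphic to the interval from \(\Phi(L/K)\) to \(L/K\) in \(\Sub_{L/K}\). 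The degenerate cases fit the same pattern: \(d=0\) is case (3), and empty intervals are case (2). Therefore \(\ext^d_{IA(\mcP)}(S_L,S_{K'})\neq 0\), and so \(\gldim\geq d\).

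The main point needing care is Step 1: that the Igusa--Zacharia Ext in the subposet \(\mcP\) agrees with the Ext in \(\Sub_G\). This requires \(\mcP\) to be convex, which is exactly the relative-family property of Remark \ref{rmk:relfamily} together with the fact that everything in \([H]^\cO\) lies below \(H\) when \(G\) is abelian. Once this holds, all the remaining steps reduce to results already proved for \(\Sub\) of an abelian group.
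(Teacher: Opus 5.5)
Your proof is correct and follows essentially the same route as the paper. Convexity of \(\Sub_{\langle H\rangle}^{\cO}\), from Remark \ref{rmk:relfamily}, makes the Igusa--Zacharia Ext groups agree with those over \(\Sub_G\); Lemma \ref{lem:interval_reductions} then reduces them to a quotient group where Theorem \ref{thm:gldim_abelian_group} applies. The differences are cosmetic: you work with arbitrary pairs \(K\le L\) and \(L/K\) rather than minimal \(K'\) and \(H/K'\), and you make explicit, via Corollary \ref{cor:abelianfrattini} and convexity, that the realising pair lies in the poset, a step the paper leaves implicit.
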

\begin{proof}
    Using the incidence algebra approach for \(\mcP=\Sub_{\langle H \rangle}^\cO\) we want to compute \(\ext^n_{IA(\mcP)}(S_L, S_K)\) for \(K\leq L \in \Sub_{\langle H \rangle}^\cO\). 
    The closure property of $[H]^{\mcO}$ recalled in Remark \ref{rmk:relfamily} implies that the interval $I_{\mcP}(L,K)$ between $L$ and $K$ in $\mcP$ is the same as the interval $I_{\Sub_H}(L,K)$ between $L$ and $K$ in $\Sub_H$. Lemma \ref{lem:interval_reductions} then implies 
    \[
    \ext^n_{IA(\mcP)}(S_L, S_K)\cong \ext^n_{IA(\Sub_H)}(S_L, S_K) \cong \ext^n_{IA(\Sub_{H/K})}(S_{L/K}, S_{K/K}).
    \] 

For a given \(K \in \Sub_{\langle H \rangle}^\cO\) and any \(L\geq K\) the last Ext group is calculated for the incidence algebra over the poset \(\Sub_{H/K}\). Notice that if \(K\) is not a minimal subgroup in \(\Sub_{\langle H \rangle}^\cO\) then there exists a minimal subgroup \(K' \in \Sub_{\langle H \rangle}^\cO\) such that \(K'\leq K\) and 
\[
\ext^n_{IA(\Sub_{H/K})}(S_{L/K}, S_{K/K})\cong \ext^n_{IA(\Sub_{H/K'})}(S_{L/K'}, S_{K/K'})
\] 
can be calculated in \(\Sub_{H/K'}\). Thus we are back to the case from Theorem \ref{thm:gldim_abelian_group}, giving us that the global dimension is the number of prime-power cyclic factors of \(H/K'\). 
Since we need to vary \(K\) in the calculation above, we need to take the maximum over all minimal \(K'\in \Sub_{\langle H \rangle}^\cO\) of the number of prime-power cyclic factors of \(H/K'\), which is \(\dim ([H]^\cO)\).
\end{proof}

\begin{theorem}\label{thm:one_cloud}
    Let \(G\) be an abelian group and \(\cO\) a disk-like transfer system for \(G\). Then the global dimension of the category \(\Coeff_{\langle H\rangle^{\cO},\bQ}^{\,G}\) is \(\dim ([H]^\cO)\). 
\end{theorem}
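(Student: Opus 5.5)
The plan is to run the Abelian Sandwich argument of Theorem \ref{thm:sandwich} with \(\Sub_G\) replaced by the sub-poset \(\mcP=\Sub_{\langle H\rangle}^{\cO}\), and then quote Proposition \ref{prop:gl_dim_cloud}. Since \(G\) is abelian, conjugacy classes of subgroups are single subgroups. So for \(\mcS=\orb^G_{\langle H\rangle^{\cO}}\) the projected poset \(\widetilde{\mcS}\) is exactly \(\mcP\). I will show that
\[
\PreSh_\bQ(\mcP)\longrightarrow \Coeff_{\langle H\rangle^{\cO},\bQ}^{\,G}\longrightarrow \PreSh_\bQ(\mcP)[G]
\]
are both quintessential subcategories. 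Corollary \ref{cor:quintlocal_gldim}, together with Theorem \ref{thm:equiv_pres=pres}, will then give
\[
\gldim \PreSh_\bQ(\mcP)\le \gldim \Coeff_{\langle H\rangle^{\cO},\bQ}^{\,G}\le \gldim \PreSh_\bQ(\mcP)[G]=\gldim\PreSh_\bQ(\mcP).
\]
By Proposition \ref{prop:gl_dim_cloud}, the outer term is \(\dim([H]^{\cO})\).

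For the first inclusion I use Proposition \ref{prop:quintlocalnoaction} directly: it is stated for an arbitrary full replete \(\mcS\subseteq\orb_G\) with \(G\) abelian. Its supporting cofinality Lemma \ref{lem:cofinalnightmare} only needs the canonical projection \(G/K\to G/H\) to lie in \(\mcS\) whenever \(K,H\in\mcS\) with \(H\le K\), and this holds because \(\mcS\) is full. I will recheck that lemma's argument in this setting, since the cofinality is the only place where the shape of \(\mcS\) matters. The conclusion is that \(\underline{\phi}^*\) has naturally isomorphic adjoints \(\mM\mapsto \mM(G/K)_{G/K}\) and \(\mM\mapsto\mM(G/K)^{G/K}\).

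For the second inclusion I view a coefficient system on \(\mcS\) as a presheaf on \(\mcP\) valued in \(\bQ[G]\)-modules whose value at \(K\) is \(K\)-fixed, exactly as in the discussion before Theorem \ref{thm:sandwich}. This identification uses abelianness: the automorphisms of \(G/K\) are \(G/K\), and the maps \(G/K\to G/L\) are translates of the projection. The inclusion functor into \(\PreSh_\bQ(\mcP)[G]\) is fully faithful. Its left adjoint is \(K\)-coinvariants at place \(K\) and its right adjoint is \(K\)-invariants at place \(K\); these are functorial in \(K\) because restriction along \(L\le K\) sends \(L\)-fixed elements into \(K\)-fixed ones, and similarly for coinvariants. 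The two adjoints are naturally isomorphic rationally via the norm composite \(A^K\to A\to A_K\). Moreover \(\PreSh_\bQ(\mcP)[G]\) has enough projectives, since it is modules over \(IA(\mcP)\otimes\bQ[G]\).

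The main obstacle is this second step. I must confirm that the \(K\)-fixed-point construction really defines a presheaf on the sub-poset \(\mcP\), which need not be closed under meets, and that the resulting adjunctions hold. I expect this to be routine, because everything is objectwise and the restriction maps only go from larger to smaller subgroups inside \(\mcP\). Finally, I will note that the equality with Theorem \ref{thm:equiv_pres=pres} holds for any finite poset, so it applies to \(\mcP\).
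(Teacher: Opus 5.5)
Your route is the paper's: restrict the Abelian Sandwich to \(\mcP=\Sub_{\langle H\rangle}^{\cO}\), equate the two presheaf categories via Theorem \ref{thm:equiv_pres=pres}, and conclude with Proposition \ref{prop:gl_dim_cloud}. Your handling of the first inclusion \(\underline{\phi}^*\), with adjoints \(G/K\)-coinvariants and \(G/K\)-invariants, is fine.

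\textbf{The step you expected to be routine is false as stated.} The paper makes the same assertion just before Theorem \ref{thm:sandwich}. The inclusion \(F\colon \Coeff_{\langle H\rangle^{\cO},\bQ}^{\,G}\to \PreSh_\bQ(\mcP)[G]\) is not quintessential.
\begin{itemize}
\item The composite \(N(K)\to N(L)\to N(L)_L\) need not factor through \(N(K)_K\). For \(k\in K\setminus L\), the element \(kx-x\) maps to \(k\,r(x)-r(x)\), which need not lie in \(I_L N(L)\); here \(I_L\) is the augmentation ideal of \(\bQ[L]\).
\item The true left adjoint is the largest quotient satisfying the Weyl condition. Its value at \(L\) is \(N(L)/\sum_{K\supseteq L} r_{K,L}(I_KN(K))\), and it is not isomorphic to the right adjoint.
\item For a concrete counterexample, take \(G=C_2\) with the trivial transfer system, so \(\mcP=\{e<C_2\}\). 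Let \(N(C_2)=N(e)=\bQ_\sigma\) be the sign representation, with identity restriction.
\item Any map \(N\to FM\) into a coefficient system vanishes at \(C_2\), because \(C_2\) acts trivially on \(M(C_2)\). By compatibility with restriction it then vanishes at \(e\).
\item So the left adjoint sends \(N\) to \(0\), while the fixed-point right adjoint gives \((0,\bQ_\sigma)\neq 0\). Applying the left adjoint to \((0,\bQ_\sigma)\hookrightarrow N\) also shows it is not exact.
\end{itemize}

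\textbf{The fix.} The inequality you need only uses the right adjoint, so it survives.
\begin{itemize}
\item Put \((RN)(K)=N(K)^K\). This is a sub-presheaf because \(G\)-equivariant restriction sends \(K\)-fixed vectors of \(N(K)\) to \(K\)-fixed, hence \(L\)-fixed, vectors of \(N(L)\). Your sentence states this direction backwards.
\item \(R\) is right adjoint to \(F\), and it is exact because fixed points of a finite group are exact over \(\bQ\).
\item Since \(F\) is also exact and fully faithful, Lemma \ref{lem:adjunction_and_global_dim} shows that \(\ext^*\) in \(\PreSh_\bQ(\mcP)[G]\) between objects in the image of \(F\) agrees with \(\ext^*\) between coefficient systems. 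Hence \(\gldim\Coeff_{\langle H\rangle^{\cO},\bQ}^{\,G}\le\gldim\PreSh_\bQ(\mcP)[G]\).
\end{itemize}
So for the second inclusion, replace ``quintessential'' by ``exact, fully faithful, with exact right adjoint'' and drop the norm-isomorphism claim. The rest of your argument then goes through unchanged.
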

\begin{proof}
    We again sandwich the category \(\Coeff_{\langle H\rangle^{\cO},\bQ}^{\,G}\) between rational presheaves on \(\Sub_{\langle H \rangle}^{\cO}\) and rational \(G\)-presheaves on \(\Sub_{\langle H \rangle}^{\cO}\) using a pair of quintessential localisations obtained from Theorem \ref{thm:sandwich} by restricting to the poset \(\Sub_{\langle H \rangle}^{\cO}\). Both of the presheaf categories have the same global dimension by Theorem \ref{thm:equiv_pres=pres}. By Proposition  \ref{prop:gl_dim_cloud} this dimension is \(\dim ([H]^\cO)\).   
\end{proof}

\begin{theorem}\label{thm:main_thm_disklike}
    Let \(G\) be an abelian group and \(\cO\) a disk-like transfer system for \(G\). The global dimension of the category of rational \(\cO\)-Mackey functors is the maximum over all subgroups \(H \in {\SubOG}\) of \(\dim ([H]^\cO)\). 
\end{theorem}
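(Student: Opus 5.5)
The argument reduces to results already established: split the category using Corollary \ref{cor:abeliansplittingIncompleteMackey}, then apply Theorem \ref{thm:one_cloud} to each factor. Since \(G\) is abelian and \(\cO\) is disk-like, there is an equivalence
\[
\OMackey_\bQ^G \cong \prod_{H\in \SubOG} \Coeff_{\langle H\rangle^{\cO},\bQ}^{\,G},
\]
and equivalences of abelian categories preserve global dimension. The task is therefore to compute the global dimension of a finite product of abelian categories in terms of its factors, and then substitute the value \(\dim([H]^\cO)\) for each factor.

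The first step is to show that for a finite product \(\prod_i \cC_i\) of abelian categories with enough projectives, the global dimension is the maximum of the \(\gldim \cC_i\). In a product, kernels, cokernels, projectives and projective resolutions are all computed componentwise, so
\[
\ext^n_{\prod_i\cC_i}\big((C_i)_i,(C'_i)_i\big)\cong \prod_i \ext^n_{\cC_i}(C_i,C'_i).
\]
This Ext group is nonzero if and only if some factor \(\ext^n_{\cC_i}\) is nonzero.

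For the inequality \(\geq\), I will use the example following Corollary \ref{cor:quintlocal_gldim}. It shows that each factor is a quintessential subcategory of the product, so \(\gldim \cC_i\le \gldim\prod_j\cC_j\) for every \(i\). Each factor \(\Coeff_{\langle H\rangle^{\cO},\bQ}^{\,G}\) has enough projectives, since the functors \(\underline{L}_J\) applied to evaluations provide them, as in the proof of Lemma \ref{lem:gdupperbound}. The inequality \(\leq\) follows from the componentwise Ext formula.

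Applying Theorem \ref{thm:one_cloud}, each factor has global dimension \(\dim([H]^\cO)\). Hence the global dimension of \(\OMackey_\bQ^G\) is \(\max_{H\in\SubOG}\dim([H]^\cO)\). The indexing is correct as stated: because \(G\) is abelian, conjugacy classes in \(\SubOG/G\) are singletons, so ranging over \(H\in\SubOG\) is the same as ranging over the factors of the splitting.

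There is no serious obstacle. The only point needing care is the bookkeeping that the product decomposition genuinely computes Ext componentwise, which is immediate from how exact sequences and projectives behave in products.
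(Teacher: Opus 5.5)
Your proposal is correct and follows the paper's proof: you split via Corollary~\ref{cor:abeliansplittingIncompleteMackey}, apply Theorem~\ref{thm:one_cloud} to each factor, and take the maximum. Your componentwise-Ext argument for why a finite product has global dimension equal to the maximum over its factors spells out a step the paper leaves implicit.
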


\begin{proof}
In case of an abelian group \(G\) and a disk-like transfer system \(\cO\), 
Corollary \ref{cor:abeliansplittingIncompleteMackey} gives an equivalence of categories
     \[
        \OMackey_\bQ^G \cong \prod_{H\in {\SubOG}} \Coeff_{\langle H \rangle^{\cO},\bQ}^{\,G}.
    \]
By Theorem \ref{thm:one_cloud} the global dimension of \(\Coeff_{\langle H \rangle^{\cO},\bQ}^{\,G}\) is \(\dim ([H]^\cO)\); thus the global dimension of \(\OMackey_\bQ^G\) is the maximum over all \(H\in {\SubOG}\) of \(\dim ([H]^\cO)\).
\end{proof}

Notice that if \(\cO\) is the trivial transfer system Theorem \ref{thm:main_thm_disklike} recovers Theorem \ref{thm:sandwich} and if \(\cO\) is the complete transfer system Theorem \ref{thm:main_thm_disklike} recovers the abelian case of Remark \ref{rmk:completecasebound}.

\begin{corollary}\label{cor:dimzerocase}
    Let \(G\) be an abelian group and \(\cO\) a disk-like transfer system for \(G\). 
    The global dimension of \(\OMackey_\bQ^G\) is zero if and only if 
    \(\cO\) is the complete transfer system.
\end{corollary}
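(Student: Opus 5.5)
The plan is to reduce the statement to a purely combinatorial condition on the inseparability classes, using Theorem~\ref{thm:main_thm_disklike}. That theorem says the global dimension of \(\OMackey_\bQ^G\) is \(\max_{H\in\SubOG}\dim([H]^\cO)\). So the global dimension is zero if and only if \(\dim([H]^\cO)=0\) for every \(H\in\SubOG\).

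The first step is to show that \(\dim([H]^\cO)=0\) holds exactly when \(\Sub^\cO_{\langle H\rangle}=\{H\}\). Suppose the class contains some \(K\neq H\). Since \(G\) is abelian, all elements of \([H]^\cO\) are subgroups of \(H\), so \(K<H\). Then \(H/K\) is a nontrivial finite abelian group and has at least one prime-power cyclic factor. Taking \(L=H\) in Definition~\ref{def:n_h} gives \(\dim([H]^\cO)\geq 1\). Conversely, if the class is the singleton \(\{H\}\), the only pair is \(L=K=H\), and \(H/H\) has no cyclic factors, so the dimension is \(0\). Hence the global dimension vanishes if and only if every inseparability class is a singleton.

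The second step is to identify which transfer systems have only singleton classes. If \(\cO\) is complete, then \(\SubOG=\Sub_G\) and each class is a singleton. This is the content of Remark~\ref{rmk:completecasebound}; alternatively, the marks \(\abs{(G/L)^J}\) for all \(L\) separate subgroups of an abelian group. For the converse, assume every class is a singleton. Inseparability is an equivalence relation on \(\Sub_G\), so every \(K\leq G\) lies in some class. By \cite[Corollary 3.24]{paper1}, as recalled after Definition~\ref{def:insep}, that class has the form \([H]^\cO\) for some \(H\) with \(H\to G\). Since the class is a singleton, \(K=H\), and therefore \(K\to G\) for every subgroup \(K\). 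Now let \(K\subseteq L\) be arbitrary. Applying restriction closure of Definition~\ref{def:transfersystem} to \(K\to G\) and \(L\subseteq G\) gives \(K\cap L=K\to L\). So \(\cO\) contains all inclusions and is complete.

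The only point needing care is the converse direction. There one must invoke the fact that every subgroup belongs to the class of some \(H\in\SubOG\); I would cite \cite[Corollary 3.24]{paper1} directly for this. The remaining steps are routine.
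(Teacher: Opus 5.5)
Your proposal is correct and follows the same route as the paper's proof. You use Theorem~\ref{thm:main_thm_disklike} to show that vanishing global dimension forces every inseparability class to be a singleton, so every subgroup lies in \(\SubOG\) and \(\cO\) is complete. You also supply the two justifications the paper leaves implicit: \cite[Corollary 3.24]{paper1} for the fact that every class contains an element of \(\SubOG\), and the restriction axiom for passing from \(K\to G\) to \(K\to L\) whenever \(K\subseteq L\).
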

\begin{proof}
    If \(\cO\) is the complete transfer system, then the posets
    of the split pieces of Theorem \ref{thm:splittingIncompleteMackey} 
    are all singletons and so the global dimension is zero.

    For the converse, if the global dimension is zero, then \(\dim ([H]^\cO)=0\) 
    for all inseparability classes \([H]^\cO\). This implies that each inseparability class 
    is just a singleton, hence \(\cO\) contains \(H \to G\) for each subgroup \(H\). 
    This implies \(\cO\) is the complete transfer system. 
\end{proof}

\section{Monotonicity of the global dimension over the poset of transfer systems}\label{sec:monotonicity}
Results above imply that the global dimension is a monotone function on disk-like transfer systems for an abelian group \(G\) with the minimal value (zero) for the complete transfer system for \(G\) and the maximal value equal to the number of prime-power cyclic factors of \(G\) for the trivial transfer system for \(G\).

To be more precise, let \(\cO_1 \le \cO_2\) be an inclusion of disk-like transfer systems for an abelian group \(G\). 
Proposition 3.16 and Definition 3.22 from \cite{paper1} imply that the partition of \(\Sub_G\) into inseparability classes given by \(\cO_2\) is finer than the partition given by \(\cO_1\). In other words each of the posets \(\Sub_{\langle H \rangle}^{\cO_2}\) is contained in one of the posets \(\Sub_{\langle H' \rangle}^{\cO_1}\), for some \(H'\in \Sub^{\cO_1}_G\), and when we have such a containment, Definition \ref{def:n_h}  implies that 
\[
\dim\big([H]^{\cO_2}\big)\leq \dim\big([H']^{\cO_1}\big).
\]
Hence the maximum of $\dim([H]^{\cO_2})$ over all subgroups \(H \in \Sub^{\cO_2}_G\) is therefore less or equal than the maximum of \(\dim ( [H]^{\cO_1} )\) over all subgroups \(H \in \Sub^{\cO_1}_G\).
By Theorem \ref{thm:main_thm_disklike}, the global dimension of the category of rational \(\cO\)-Mackey functors is the maximum over all subgroups \(H \in {\SubOG}\) of \(\dim ([H]^\cO)\), thus we obtain the following result. 

\begin{theorem}\label{thm:monotonicity}
Suppose \(\cO_1 \le \cO_2\) is an inclusion of disk-like transfer systems for an abelian group \(G\). Then 
\[\gldim(\cO_2\mhyphen\Mackey_\bQ^G) \leq \gldim (\cO_1\mhyphen\Mackey_\bQ^G).\]
\end{theorem}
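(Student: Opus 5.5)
The statement follows by combining Theorem \ref{thm:main_thm_disklike} with a comparison of the inseparability classes of \(\cO_1\) and \(\cO_2\). This is essentially the argument sketched in the paragraph preceding the theorem; the plan is to make each step precise.

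By Theorem \ref{thm:main_thm_disklike}, \(\gldim(\cO_i\mhyphen\Mackey_\bQ^G)\) is the maximum of \(\dim([H]^{\cO_i})\) over \(H\in\Sub_G^{\cO_i}\). It therefore suffices to show the following: for every \(H\in\Sub_G^{\cO_2}\) there is \(H'\in\Sub_G^{\cO_1}\) with \(\dim([H]^{\cO_2})\le\dim([H']^{\cO_1})\).

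\textbf{Step 1: refinement of partitions.} Since \(\cO_1\subseteq\cO_2\), we have \(\Sub_G^{\cO_1}\subseteq\Sub_G^{\cO_2}\). Definition \ref{def:insep} says that \(J,K\) are \(\cO\)-inseparable when \(\abs{(G/L)^J}=\abs{(G/L)^K}\) for all \(L\in\Sub_G^{\cO}\). Thus \(\cO_2\)-inseparability imposes these equalities for more \(L\), and so implies \(\cO_1\)-inseparability. Hence each class \([H]^{\cO_2}\) is contained in some class \([H']^{\cO_1}\). Since \(G\) is abelian, every class consists of subgroups of its chosen representative, which is the unique maximal element. So \(\Sub_{\langle H\rangle}^{\cO_2}=[H]^{\cO_2}\subseteq[H']^{\cO_1}=\Sub_{\langle H'\rangle}^{\cO_1}\). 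This is the content cited from \cite{paper1} (Proposition 3.16 and Definition 3.22), and it also follows directly from Definition \ref{def:insep}.

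\textbf{Step 2: comparison of dimensions.} By Definition \ref{def:n_h}, \(\dim([H]^{\cO_2})\) is the maximum, over pairs \(K\le L\) in \(\Sub_{\langle H\rangle}^{\cO_2}\), of the number of prime-power cyclic factors of \(L/K\). Every such pair is also a pair in \(\Sub_{\langle H'\rangle}^{\cO_1}\). A maximum over a larger set is at least as large, so \(\dim([H]^{\cO_2})\le\dim([H']^{\cO_1})\). This monotonicity is exactly why Definition \ref{def:n_h} ranges over all pairs \(K\le L\) rather than only over the minimal \(K\) with \(L=H\).

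\textbf{Step 3: conclusion.} Taking the maximum over \(H\in\Sub_G^{\cO_2}\) and applying Theorem \ref{thm:main_thm_disklike} to both transfer systems gives the inequality. The main obstacle is only Step 1, namely making sure the class containment holds as sets of subgroups and that the representatives behave as claimed. The abelian hypothesis handles the latter, since each class has a unique maximal element equal to its representative and there is no conjugation to track.
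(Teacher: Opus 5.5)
Your proposal is correct and follows essentially the same route as the paper's main argument, given in the paragraph preceding Theorem \ref{thm:monotonicity}. That argument runs: the \(\cO_2\)-inseparability classes refine the \(\cO_1\)-classes, so each \(\Sub_{\langle H\rangle}^{\cO_2}\) lies inside some \(\Sub_{\langle H'\rangle}^{\cO_1}\); Definition \ref{def:n_h}, being a maximum over all pairs \(K\le L\), is then monotone; and one concludes via Theorem \ref{thm:main_thm_disklike}. The one difference is that you derive the refinement directly from Definition \ref{def:insep} (testing more \(L\) gives a finer relation) instead of citing \cite{paper1}, which is a valid, self-contained substitute.
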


The rest of this section gives an alternative proof of this result, 
using our understanding of inclusions of posets induced by inseparability classes and the adjunctions they induce on rational presheaves.  This proof does not rely on calculating the values of the global dimensions of the categories in question.  It provides a more structural understanding of why the monotonicity result holds and we hope to use it in extending our work beyond disk-like transfer systems and abelian groups.

\begin{proposition}\label{prop:right_leftKan_zero}
    Consider a full inclusion of posets \(i\colon\cP\hookrightarrow \mcQ\) and its complement \(j\colon \mcQ\setminus\cP\hookrightarrow \mcQ\). If there are no maps from any object in \(\cP\) to any object in \(\mcQ\setminus \cP\), then
    \begin{enumerate}
        \item the right adjoint \(i_\ast\) to the restriction functor
        \[
            i^\ast \colon \PreSh_\bQ (\mcQ) \to \PreSh_\bQ (\cP)
        \]
        is given by extension by zero, and
        \item the left adjoint \(j_{!}\) to the restriction functor
        \[
            j^\ast\colon \PreSh_{\bQ}(\mcQ)\to\PreSh_{\bQ}(\mcQ\setminus\cP)
        \]
        is given by extension by zero.
    \end{enumerate}
\end{proposition}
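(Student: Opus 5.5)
Fix the conventions first. A presheaf on \(\mcQ\) is contravariant, so a map \(q\to q'\) in \(\mcQ\) induces \(F(q')\to F(q)\). The hypothesis says there is no \(p\to q\) with \(p\in\cP\) and \(q\in\mcQ\setminus\cP\). Equivalently, \(\mcQ\setminus\cP\) is downward closed in \(\mcQ\) and \(\cP\) is upward closed. For a presheaf \(F\) on \(\cP\), define the extension by zero \(E_0F\) on \(\mcQ\) by \(E_0F(q)=F(q)\) for \(q\in\cP\) and \(E_0F(q)=0\) otherwise. Define the analogous \(E_0'F'\) for a presheaf \(F'\) on \(\mcQ\setminus\cP\).

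\textbf{Well-definedness.} Check that each extension by zero is a functor \(\mcQ^{\op}\to\bQ\dashMod\). For a map \(q\to q'\), both endpoints lie in \(\cP\), or both lie outside \(\cP\), or \(q\notin\cP\) and \(q'\in\cP\). In the first two cases use the structure map of the original presheaf. In the mixed case the structure map has zero source or target, so it is zero. Functoriality along composites \(q\to q'\to q''\) holds: if the composite is a mixed map, one of the two factors is mixed, so both sides are zero.

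\textbf{Part (1).} Exhibit a natural bijection \(\hom_{\PreSh(\cP)}(i^*G,F)\cong\hom_{\PreSh(\mcQ)}(G,E_0F)\). A map \(G\to E_0F\) has zero components off \(\cP\), so it is determined by its components on \(\cP\). The only content is naturality along mixed maps \(q\to q'\) with \(q\notin\cP\) and \(q'\in\cP\). Here the square has target \(E_0F(q)=0\), so it commutes automatically. Hence maps \(G\to E_0F\) correspond exactly to maps \(i^*G\to F\), naturally in \(F\) and \(G\). Alternatively, compute the right Kan extension pointwise: \(i_*F(q)=\lim_{p\in\cP,\,q\to p}F(p)\) over the appropriate comma category. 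For \(q\in\cP\) this category has initial object \(q\) itself, since \(\cP\) is full, so the limit is \(F(q)\). For \(q\notin\cP\) the relevant indexing set is \(\{p\in\cP : p\le q\}\), which the hypothesis forces to be empty. The limit over the empty category is \(0\).

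\textbf{Part (2).} This is the dual argument. Give \(\hom_{\PreSh(\mcQ)}(E_0'F',G)\cong\hom(F',j^*G)\), where naturality along a mixed map \(q\to q'\) is automatic because the source \(E_0'F'(q')\) is \(0\) for \(q'\in\cP\). Equivalently, compute the left Kan extension pointwise as a colimit over \(\{r\in\mcQ\setminus\cP : q\le r\}\). For \(q\in\cP\) this set is empty by the hypothesis, and for \(q\notin\cP\) it has \(q\) as an initial object.

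\textbf{Main obstacle.} The only delicate step is getting the variance right: identifying which comma category appears in each Kan extension under contravariance, and checking that the hypothesis kills exactly the mixed-direction naturality squares or empties exactly the relevant comma categories. Everything else is formal.
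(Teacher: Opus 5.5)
Your proposal is correct. The paper's proof is exactly your ``alternatively'' paragraph. It identifies \(i_*\) with the right Kan extension and notes that its values on \(\cP\) are unchanged because \(i\) is fully faithful. For \(q\in\mcQ\setminus\cP\) it observes that the limit is over an empty slice category, hence \(0\), and it dismisses part (2) as dual.

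Your main argument is a different, more elementary route. You write down extension by zero, check that it is a presheaf, and verify the bijections \(\hom(i^*G,F)\cong\hom(G,E_0F)\) and \(\hom(E_0'F',G)\cong\hom(F',j^*G)\) by hand. This buys two things the paper leaves implicit:
\begin{enumerate}
\item It identifies the adjoint as a functor, structure maps included, not just objectwise.
\item It shows exactly where the hypothesis enters. It rules out mixed maps \(p\to q\) with \(p\in\cP\) and \(q\notin\cP\), whose naturality squares would otherwise impose a real condition (\(\alpha_p\circ G(f)=0\), respectively \(G(f)\circ\beta_q=0\)). It also makes \(\cP\) convex, so that extension by zero is functorial.
\end{enumerate}
On convexity, your composite check only treats composites that are themselves mixed. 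The remaining case \(q\le q'\le q''\) with \(q,q''\in\cP\) needs \(q'\in\cP\). This follows from the upward closure you stated, but it deserves an explicit sentence. The paper's route is shorter and entirely standard.

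There is one variance slip in the Kan-extension version of part (2), which matters since you flagged variance as the delicate point. A colimit is computed by evaluating at a \emph{terminal} object of the indexing category. In the comma category \(j^{\op}\downarrow q\), the object \((q,\Id)\) is terminal. It is true that \(q\) is the least element of \(\{r\in\mcQ\setminus\cP : q\le r\}\) in the order of \(\mcQ\). But \(F'\) is contravariant, so the diagram is indexed by the opposite poset, where \(q\) is terminal. Calling it ``initial'' is correct only for the poset order, and it clashes with your comma-category language in part (1), where \((q,\Id)\) really is initial, as the limit requires. The conclusion is right, and your direct adjunction argument does not depend on this.
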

\begin{proof}
The right adjoint to the restriction functor is the right Kan extension. In this setting, since we have the fully faithful functor \(i\), the value of the right Kan extension for every \(p\in \cP\) is unchanged, and the value for any \(q\in \mcQ\setminus \cP\) can be computed as a limit over a slice category which is empty, by assumption. The limit gives the terminal object, which is zero, thus the right adjoint \(i_\ast\) to the restriction functor \(i^\ast\) is extension by zero.
A dual argument gives the second case. 
\end{proof}

An immediate consequence of the definition of the partial order on transfer systems is that the assignment
\[
    \cO\mapsto \Sub^{\cO}_{G}
\]
is monotonic on disk-like transfer systems. Because of this, for a general inclusion \(\cO_1\le\cO_2\) of disk-like transfer systems, the partitioning of \(\Sub_G\) into inseparability classes for \(\cO_2\) refines the partitioning into inseparability classes for \(\cO_1\):
\[
    [H]^{\cO_1}=\coprod_{K\in [H]^{\cO_1}\cap \Sub^{\cO_2}_G} [K]^{\cO_2}=[H]^{\cO_2}\amalg\dotsb.
\]
Applying this to the sub-posets of the subgroups of \(G\), we have a decomposition
\[
\Sub_{\langle H\rangle}^{\cO_1}=\Sub_{\langle H\rangle}^{\cO_2}\amalg \big(\Sub_{\langle H\rangle}^{\cO_1}\setminus\Sub_{\langle H\rangle}^{\cO_2}\big).
\]
Since for any \(\cO\), \(H\) is the maximal element in both posets \(\Sub_{\langle H\rangle}^{\cO}\), the closure property in Remark \ref{rmk:relfamily} implies that the inclusion
\[
i\colon \Sub_{\langle H \rangle}^{\cO_2} \rightarrow \Sub_{\langle H \rangle}^{\cO_1}
\]
satisfies the assumptions of Part 1 of Proposition \ref{prop:right_leftKan_zero}: if $K\in[H]^{\mcO_2}$ and $L\in [H]^{\mcO_1}$  satisfy $K\le L\le H$, then $L\in[H]^{\mcO_2}$. Thus the right adjoint \(i_{\ast}\) to the  restriction of rational presheaves
\[
i^* \colon \PreSh_\bQ (\Sub_{\langle H \rangle}^{\cO_1}) 
\to
\PreSh_\bQ (\Sub_{\langle H \rangle}^{\cO_2})
\]
is the extension by zero, and so is the left adjoint \(j_{!}\) to the restriction
\[
j^* \colon \PreSh_\bQ (\Sub_{\langle H \rangle}^{\cO_1}) 
\to
\PreSh_\bQ \big(\Sub_{\langle H \rangle}^{\cO_1}\setminus\Sub_{\langle H \rangle}^{\cO_2}\big).
\]
This has two consequences for the global dimension. First, \(i_{\ast}\) is an exact fully-faithful functor with exact left adjoint, hence by the right-handed version of  
Lemma \ref{lem:adjunction_and_global_dim},  the global dimension of \(\PreSh_\bQ (\Sub_{\langle H \rangle}^{\cO_2}) \) cannot exceed the global dimension of \(\PreSh_\bQ (\Sub_{\langle H \rangle}^{\cO_1}) \). Second, \(j_{!}\) is an exact fully-faithful functor with an exact right adjoint, and hence the global dimension of 
\(\PreSh_\bQ \big(\Sub_{\langle H \rangle}^{\cO_1}\setminus\Sub_{\langle H \rangle}^{\cO_2}\big) \) cannot exceed the global dimension of 
\(\PreSh_\bQ (\Sub_{\langle H \rangle}^{\cO_1}) \). 

Combining the two inequalities above we get that the global dimension of 
\(\PreSh_\bQ (\Sub_{\langle H \rangle}^{\cO_2})\) and the global dimension of \(\PreSh_\bQ \big(\Sub_{\langle H \rangle}^{\cO_1}\setminus\Sub_{\langle H \rangle}^{\cO_2}\big)\) are both less than or equal to the global dimension of \(\PreSh_\bQ (\Sub_{\langle H \rangle}^{\cO_1}) \).

Now we will use the fact that \(\cO_2\) gave a finer partition than \(\cO_1\) of subgroups of \(G\). Thus for a maximal subgroup \(K\in \Sub_{\langle H \rangle}^{\cO_1}\setminus\Sub_{\langle H \rangle}^{\cO_2}\) we have an inclusion of posets \(\Sub_{\langle K \rangle}^{\cO_2}\rightarrow \Sub_{\langle H \rangle}^{\cO_1}\setminus\Sub_{\langle H \rangle}^{\cO_2}\) satisfying again the assumptions of Part 1 of Proposition \ref{prop:right_leftKan_zero}. Notice that its complement in \(\Sub_{\langle H \rangle}^{\cO_1}\setminus\Sub_{\langle H \rangle}^{\cO_2}\) satisfies again the conditions of Part 2 of Proposition \ref{prop:right_leftKan_zero}. We can proceed by induction, since the partition of each inseparability class \([H]^{\cO_1}\) into inseparability classes for \(\cO_2\) is finite. We therefore obtain a different proof of Theorem \ref{thm:monotonicity}, one that does not depend on knowing the global dimensions.

\section{Three conjectures}\label{sec:conjectures}
Questions on the homology and homotopy of intervals of \(\Sub_G\) can be quite difficult to answer. For example, there is a conjecture by Shareshian from 2003 that posits that  every open interval in the lattice of subgroups of a finite group has the homotopy type of a wedge of spheres. This is known for solvable groups, but seems to be open in general; see Shareshian \cite{Shareshian}. In this spirit, we state three conjectures on questions on posets and global dimension.

Theorem \ref{thm:monotonicity} gives monotonicity of global dimension over disk-like transfer systems for an abelian group \(G\). We conjecture that the global dimension is a monotone function on the poset of all transfer systems for any finite group \(G\).

\begin{conjecture}\label{conj:monotone}
    The global dimension of the category of rational \(\cO\)-Mackey functors is a monotone function over a poset of all transfer systems \(\cO\) for any finite group \(G\), with the minimal value (zero) attained only at the complete transfer system for \(G\). 
\end{conjecture}

In general, we expect the global dimensions of the incidence algebras for the posets \(\Sub_G\) and \({\Sub_G}/G\) to differ, but we do not know of an example. Furthermore, Example \ref{ex:exactnessfails} shows that the non-abelian case is substantially more complicated. These points lead us to make the following conjecture. 

\begin{conjecture}\label{conj:2}
    There exists a group \(G\) where the global dimension of the rational incidence algebra for \({\Sub_G}/G\) and the global dimension of the category of rational coefficient systems for \(G\) are different.
\end{conjecture}

Further to the question of the global dimension of a category, one may ask for a pair of objects
where this dimension is realised. In the case of incidence algebras over \(\Sub_G\), this would mean identifying a 
pair of subgroups \(K,H\) such that \(\ext^n(S_H, S_K) \neq 0\) for \(n\) the global dimension. 

\begin{conjecture}[Frattini conjecture]\label{conj:frattini}
    Let \(G\) be a finite group. 
    The global dimension of \(IA(\Sub_G)\) is 
    given by the largest \(n\) such that 
    there is a subgroup \(H\) with 
    \[
    \ext^n_{IA(\Sub_G)}(S_H, S_{\Phi H}) \neq 0.
    \]    
\end{conjecture}

\begin{example}
We illustrate why the Frattini conjecture involves maximizing over all subgroups \(H\) of \(G\), rather than 
just looking at \(G\) and its Frattini subgroup, as we do in the abelian case. 

    Let \(F_5= C_5 \rtimes C_4 = \aut(D_5)\). 
    The action of \(C_4\) on \(C_5\) is defined by raising elements to their third power, 
    \(x \mapsto x^3\).
    We draw \(\Sub_{F_5}\) and \(\Sub_{F_5}/F_5\).
    We indicate different copies of \(C_2\) and \(C_4\) with superscipts. 
\begin{center}
\begin{tikzpicture}[scale=0.9]
     \def \xshift {0} ;
     \def \yshift {0};
      \node (f5) at (0+\xshift,6+\yshift) {$F_5$};

      \node (c4a) at (-5+\xshift,4+\yshift) {$C_4^1$};
      \node (c4b) at (-4+\xshift,4+\yshift) {$C_4^2$};
      \node (c4c) at (-3+\xshift,4+\yshift) {$C_4^3$};
      \node (c4d) at (-2+\xshift,4+\yshift) {$C_4^4$};
      \node (c4e) at (-1+\xshift,4+\yshift) {$C_4^5$};
 
      \node (d5) at (3+\xshift,4+\yshift) {$D_5$};   
      
      \node (c2a) at (-5+\xshift,2+\yshift) {$C_2^1$};
      \node (c2b) at (-4+\xshift,2+\yshift) {$C_2^2$};
      \node (c2c) at (-3+\xshift,2+\yshift) {$C_2^3$};
      \node (c2d) at (-2+\xshift,2+\yshift) {$C_2^4$};
      \node (c2e) at (-1+\xshift,2+\yshift) {$C_2^5$};

      \node (c5) at (3+\xshift,2+\yshift) {$C_5$};
      
      \node (c1) at (0+\xshift,0+\yshift) {$e$};

	 \draw[->,] (c1) to (c2a) ;
	 \draw[->,] (c1) to (c2b) ;
	 \draw[->,] (c1) to (c2c) ;
	 \draw[->,] (c1) to (c2d) ;
	 \draw[->,] (c1) to (c2e) ;
	 \draw[->,] (c1) to (c5) ;
  \draw[->,] (c2a) to (c4a) ;
	 \draw[->,] (c2b) to (c4b) ;
	 \draw[->,] (c2c) to (c4c) ;
	 \draw[->,] (c2d) to (c4d) ;
	 \draw[->,] (c2e) to (c4e) ;

	 \draw[->,] (c2a) to (d5) ;
	 \draw[->,] (c2b) to (d5) ;
	 \draw[->,] (c2c) to (d5) ;
	 \draw[->,] (c2d) to (d5) ;
	 \draw[->,] (c2e) to (d5) ;
  \draw[->,] (c5) to (d5) ;     

	 \draw[->,] (c4a) to (f5) ;
	 \draw[->,] (c4b) to (f5) ;
	 \draw[->,] (c4c) to (f5) ;
	 \draw[->,] (c4d) to (f5) ;
	 \draw[->,] (c4e) to (f5) ;
  \draw[->,] (d5) to (f5) ;

     \def \xshift {8} ;
     \def \yshift {0};
      \node (f5k) at (0+\xshift,6+\yshift) {$(F_5)$};
      \node (c4k) at (-2+\xshift,4+\yshift) {$(C_4)$};
      \node (d5k) at (2+\xshift,4+\yshift) {$(D_5)$};   
      \node (c2k) at (-2+\xshift,2+\yshift) {$(C_2)$};
      \node (c5k) at (2+\xshift,2+\yshift) {$(C_5)$};
      \node (c1k) at (0+\xshift,0+\yshift) {$(e)$};

	 \draw[->,] (c1k) to (c2k) ;
	 \draw[->,] (c1k) to (c5k) ;
	 \draw[->,] (c2k) to (c4k) ;

	 \draw[->,] (c2k) to (d5k) ;
      \draw[->,] (c5k) to (d5k) ;     

	 \draw[->,] (c4k) to (f5k) ;
      \draw[->,] (d5k) to (f5k) ;      

\end{tikzpicture}
\end{center}
As a warm-up, we first calculate the global dimension of \(IA(\Sub_{F_5}/F_5)\).
The poset \(\Sub_{F_5}/F_5\) is planar, so by Theorem \ref{thm:planaristwo}, the global dimension of its 
associated incidence algebra is bounded above by 2. 
We may calculate the \(\ext\)-group between the simple at \((D_5)\) and the simple at \((e)\) and see it is \(\bQ\) in degree 2.
Hence, the global dimension of \(IA(\Sub_{F_5}/F_5)\) is 2. 

Now we consider the global dimension of \(IA(\Sub_{F_5})\), the incidence algebra on \(\Sub_{F_5}\) 
where the conjugation action is not taken into account.
The poset \(\Sub_{F_5}\) is not planar. As before, by considering
\(D_5\) and its Frattini subgroup, \(e\), we see that the global dimension is at least 2. 
If the global dimension were three, it would have to arise from subgroups that are at least three arrows apart by 
Lemma \ref{lem:gdupperbound}. 
Hence we calculate the \(\ext\) group between the simple modules given by \(\bQ\) concentrated at \(F_5\) and 
the corresponding simple concentrated at its Frattini subgroup,  \(e\).  The corresponding interval is contractible, 
so we conclude the global dimension is 2.  
Thus the global dimension \(IA(\Sub_G)\) is realised by \(D_5\) and its Frattini subgroup, rather than  
\(F_5\) and its Frattini subgroup.
\end{example}

\begin{remark}
Informal calculations suggest that this global dimension of rational \(F_5\)-coefficient systems is 2, hence we do not expect the preceding example to give a suitable example for Conjecture \ref{conj:2}. We believe that making these calculations rigorous would require a new strategy, as Corollary \ref{cor:lower_bound_coeff} and the Abelian Sandwich Theorem, Theorem \ref{thm:sandwich}, do not hold if \(G\) is not abelian.
\end{remark}

\section{Examples}\label{sec:examples}

We end with two examples. The first 
illustrates that one needs to take a maximum over minimal elements in \(\Sub_{\langle H \rangle}^\cO\) in our formula for the global dimension in Proposition \ref{prop:gl_dim_cloud} (see also Definition \ref{def:n_h}).

\begin{example}
    Suppose \(G=C_{p^2q}\) for \(p,q\) distinct primes. 
    The subgroup lattice of \(G\) is as follows.
\begin{center}
\begin{tikzpicture}[scale=0.6,xscale=-1]
      \node (A) at (0,4) {\(C_{p^2q}\)};
	 \node (B) at (-2,2) {\(C_{pq}\)};
	 \node (C) at (2,2) {\(C_{p^2}\)};
      \node (D) at (0,0) {\(C_{p}\)};
      \node (E) at (-4,0) {\(C_{q}\)};
      \node (F) at (-2,-2) {\(e\)};
      \draw[<-,] (A) to (B) ;
	 \draw[<-,] (A) to (C) ;
	 \draw[<-,] (B) to (D) ;
	 \draw[<-,] (C) to (D) ;
     \draw[<-,] (B) to (E) ;
     \draw[<-,] (E) to (F) ;
     \draw[<-,] (D) to (F) ;
\end{tikzpicture}
\end{center}

Let \(\cO\) be a transfer system for \(G\) generated by one transfer \(e \rightarrow G\). In this case we get two inseparability classes \([e]\) and \([G]\). Notice that \(\Sub_{\langle e \rangle}^\cO\) consists only of the trivial subgroup \(e\) and \(\Sub_{\langle G \rangle}^\cO\) consists of all the other subgroups of \(G\), as in the picture below.

\begin{center}
\begin{tikzpicture}[scale=0.6,xscale=-1]
      \node (A) at (0,4) {\(C_{p^2q}\)};
	 \node (B) at (-2,2) {\(C_{pq}\)};
	 \node (C) at (2,2) {\(C_{p^2}\)};
      \node (D) at (0,0) {\(C_{p}\)};
      \node (E) at (-4,0) {\(C_{q}\)};
   %   \node (F) at (-2,-2) {\(p^2q C_{p^2q}\)};
      \draw[<-,] (A) to (B) ;
	 \draw[<-,] (A) to (C) ;
	 \draw[<-,] (B) to (D) ;
	 \draw[<-,] (C) to (D) ;
     \draw[<-,] (B) to (E) ;
 %    \draw[<-,] (E) to (F) ;
 %    \draw[<-,] (D) to (F) ;
\end{tikzpicture}
\end{center}

We can see that \(\Sub_{\langle G \rangle}^\cO\) has two minimal elements \(C_p\) and \(C_q\) which are incomparable. The number of the prime-power cyclic factors of \(G/C_q\) is one, and the number of the prime-power cyclic factors of \(G/C_p\) is two, thus the global dimension of \(\Coeff_{\langle G \rangle^{\cO},\bQ}^{\,G}\) is two. The global dimension of \(\Coeff_{\langle e \rangle^{\cO},\bQ}^{\,G}\) is zero, therefore by Theorem \ref{thm:main_thm_disklike} the global dimension of \(\OMackey_\bQ^G\) is the maximum of these two numbers, hence the global dimension in this case is two. 
\end{example}

The second example illustrates how the structure of inseparability classes changes as 
we add more generating transfers to a disk-like transfer system and how this affects the global dimension.  

\begin{example}
    Suppose \(G=C_{p^3q^2}\) with the subgroup lattice given by the picture below. 
    \begin{center}
\begin{tikzpicture}[scale=0.6]
      \node (A) at (0,4) {\(C_{p^3q^2}\)};
	 \node (B) at (-2,2) {\(C_{p^3q}\)};
	 \node (C) at (2,2) {\(C_{p^2q^2}\)};
      \node (D) at (0,0) {\(C_{p^2q}\)};
      \node (E) at (-4,0) {\(C_{p^3}\)};
      \node (F) at (-2,-2) {\(C_{p^2}\)};
       \node (G) at (0,-4) {\(C_{p}\)};
	 \node (H) at (2,-2) {\( C_{pq}\)};
	 \node (I) at (4,0) {\(C_{pq^2}\)};
      \node (J) at (2,-6) {\(e\)};
      \node (K) at (4,-4) {\( C_{q}\)};
      \node (L) at (6,-2) {\( C_{q^2}\)};
      
      \draw[<-,] (A) to (B) ;
	 \draw[<-,] (A) to (C) ;
	 \draw[<-,] (B) to (D) ;
	 \draw[<-,] (C) to (D) ;
     \draw[<-,] (B) to (E) ;
     \draw[<-,] (E) to (F) ;
     \draw[<-,] (D) to (F) ;
     \draw[<-,] (I) to (L) ;
	 \draw[<-,] (I) to (H) ;
	 \draw[<-,] (H) to (K) ;
	 \draw[<-,] (L) to (K) ;
     \draw[<-,] (H) to (G) ;
     \draw[<-,] (G) to (J) ;
     \draw[<-,] (K) to (J) ;
     \draw[<-,] (C) to (I) ;
     \draw[<-,] (D) to (H) ;
     \draw[<-,] (F) to (G) ;
     
\end{tikzpicture}
\end{center}    
    Take the disk-like transfer system \(\cO\) generated by 
    \(\{ C_{p^3} \rightarrow G\}\). This gives two inseparability classes of subgroups of \(G\). The corresponding \(\Sub_{\langle C_{p^3} \rangle}^\cO\) and \(\Sub_{\langle G \rangle}^\cO\) are illustrated on the picture below.

\begin{center}
\begin{tikzpicture}[scale=0.6]
      \node (A) at (0,4) {\(C_{p^3q^2}\)};
	 \node (B) at (-2,2) {\(C_{p^3q}\)};
	 \node (C) at (2,2) {\(C_{p^2q^2}\)};
      \node (D) at (0,0) {\(C_{p^2q}\)};
      \node (E) at (-4,0) {\(C_{p^3}\)};
      \node (F) at (-2,-2) {\(C_{p^2}\)};
       \node (G) at (0,-4) {\(C_{p}\)};
	 \node (H) at (2,-2) {\( C_{pq}\)};
	 \node (I) at (4,0) {\(C_{pq^2}\)};
      \node (J) at (2,-6) {\(e\)};
      \node (K) at (4,-4) {\( C_{q}\)};
      \node (L) at (6,-2) {\( C_{q^2}\)};
      \draw[<-,] (A) to (B) ;
	 \draw[<-,] (A) to (C) ;
	 \draw[<-,] (B) to (D) ;
	 \draw[<-,] (C) to (D) ;
  %   \draw[<-,] (B) to (E) ;
     \draw[<-,] (E) to (F) ;
 %    \draw[<-,] (D) to (F) ;
     \draw[<-,] (I) to (L) ;
	 \draw[<-,] (I) to (H) ;
	 \draw[<-,] (H) to (K) ;
	 \draw[<-,] (L) to (K) ;
  %   \draw[<-,] (H) to (G) ;
     \draw[<-,] (G) to (J) ;
 %    \draw[<-,] (K) to (J) ;
     \draw[<-,] (C) to (I) ;
     \draw[<-,] (D) to (H) ;
     \draw[<-,] (F) to (G) ;
\end{tikzpicture}
\end{center}
The global dimension of \(\OMackey_\bQ^G\) is two by  Theorem \ref{thm:main_thm_disklike} since the number of prime-power cyclic factors of \(C_{p^3}/e\) is one and for \(C_{p^3q^2}/C_q\) it is two.

Take the disk-like transfer system \(\cO'\) generated by \(\{C_{p^3} \rightarrow G, \ C_{pq} \rightarrow G\}\). Notice that \(\cO\leq \cO'\) in the poset of transfer systems (ordered by inclusion). By the restriction axiom of transfer systems, there is also a third transfer in \(\cO'\) ending at \(G\), namely  \(C_{p} \rightarrow G\). These transfers give four inseparability classes of subgroups of \(G\). The picture shows \(\Sub_{\langle H \rangle}^\cO\) for \(H=C_p,C_{pq},C_{p^3}, C_{p^3q^2}\). 
\begin{center}
\begin{tikzpicture}[scale=0.6]
      \node (A) at (0,4) {\(C_{p^3q^2}\)};
	 \node (B) at (-2,2) {\(C_{p^3q}\)};
	 \node (C) at (2,2) {\(C_{p^2q^2}\)};
      \node (D) at (0,0) {\(C_{p^2q}\)};
      \node (E) at (-4,0) {\(C_{p^3}\)};
      \node (F) at (-2,-2) {\(C_{p^2}\)};
       \node (G) at (0,-4) {\(C_{p}\)};
	 \node (H) at (2,-2) {\( C_{pq}\)};
	 \node (I) at (4,0) {\(C_{pq^2}\)};
      \node (J) at (2,-6) {\(e\)};
      \node (K) at (4,-4) {\( C_{q}\)};
      \node (L) at (6,-2) {\( C_{q^2}\)};
      \draw[<-,] (A) to (B) ;
	 \draw[<-,] (A) to (C) ;
	 \draw[<-,] (B) to (D) ;
	 \draw[<-,] (C) to (D) ;
  %   \draw[<-,] (B) to (E) ;
     \draw[<-,] (E) to (F) ;
 %    \draw[<-,] (D) to (F) ;
     \draw[<-,] (I) to (L) ;
%	 \draw[<-,] (I) to (H) ;
	 \draw[<-,] (H) to (K) ;
%	 \draw[<-,] (L) to (K) ;
  %   \draw[<-,] (H) to (G) ;
     \draw[<-] (G) to (J) ;
 %    \draw[<-,] (K) to (J) ;
     \draw[<-,] (C) to (I) ;
 %    \draw[<-,] (D) to (H) ;
%     \draw[<-,] (F) to (G) ; 
\end{tikzpicture}
\end{center}
   We claim the global dimension of  \(\OpMackey_\bQ^G\) remains  two. Consider the largest inseparability class, which has
   \(C_{p^3q^2}\) as the maximal element and \(C_{p^2q}\) and \(C_{q^2}\) as minimal elements. 
   The number of prime-power cyclic factors of \(C_{p^3q^2}/C_{p^2q}\) and \(C_{p^3q^2}/C_{q^2}\)
   are two and one respectively, proving the claim. 
\end{example}

To reduce the global dimension, we add the transfer \(C_{p^2q} \rightarrow G\). That is, consider the disk-like transfer system \(\cO''\) generated by \(\{C_{p^3} \rightarrow G, \ C_{pq} \rightarrow G, \ C_{p^2q} \rightarrow G\}\). This transfer system has more inseparability classes than for \(\cO'\) and in particular, \(G\) and \(C_{p^2q}\) will be in different inseparability classes. This means there is no possibility for an inseparability class containing \(L \geqslant K\) with \(L/K\) having two prime-power cyclic factors, thus the global dimension is less than two. This transfer system cannot be the complete transfer system, as the minimal generating set for the complete transfer system of \(C_{p^3 q^2}\) is five by \cite[Theorem B]{ABBSWC}. By Corollary \ref{cor:dimzerocase}, we conclude that the  global dimension is not 0, thus it is 1.  We note that we did not need to explicitly calculate the  inseparability classes or the whole of the transfer system to deduce this.

\bibliographystyle{alpha}
\bibliography{biblio}
\end{document}